\newtheorem{Th}{Theorem}
\newtheorem{Lem}{Lemma}
\newtheorem{Rem}{Remark}
\begin{document}
\thispagestyle{empty}

\title[ ]
{
Bifurcations in asymptotically autonomous Hamiltonian systems under oscillatory perturbations
}
\author{Oskar A. Sultanov}
\address{Chebyshev Laboratory, St. Petersburg State University, 14th Line V.O., 29, Saint Petersburg 199178 Russia.}

\email{oasultanov@gmail.com}

\maketitle
{\small

{\small
\begin{quote}
\noindent{\bf Abstract.} The effect of decaying oscillatory perturbations on autonomous Hamiltonian systems in the plane with a stable equilibrium is investigated. It is assumed that perturbations preserve the equilibrium and satisfy a resonance condition. The behaviour of the perturbed trajectories in the vicinity of the equilibrium is investigated. Depending on the structure of the perturbations, various asymptotic regimes at infinity in time are possible. In particular, a phase locking and a phase drifting can occur in the systems. The paper investigates the bifurcations associated with a change of Lyapunov stability of the equilibrium in both regimes. The proposed stability analysis is based on a combination of the averaging method and the construction of Lyapunov functions.

\medskip

\noindent{\bf Keywords: }{asymptotically autonomous system, perturbation, bifurcation, stability, averaging, Lyapunov function}

\medskip
\noindent{\bf Mathematics Subject Classification: }{34C23, 34D10, 34D20, 37J65}
\end{quote}
}

\section{Introduction}
In this paper, the influence of time-dependent perturbations on the stability of solutions to autonomous Hamiltonian systems is investigated. It is assumed that the perturbations fade with time such that the disturbed systems are asymptotically autonomous. Note that such systems have been considered in many papers. In particular, the relations between the trajectories of  asymptotically autonomous system and the solutions of the corresponding limiting system were discussed in~\cite{Markus56}. In some cases, the trajectories of disturbed and limiting systems have the same asymptotic behavior~\cite{Theim92}. However, this is not true in general~\cite{Theim94}. The qualitative and asymptotic properties of solutions to such systems depend both on the form of the unperturbed system and on the structure of decaying perturbations~\cite{WongBurton65,Grimmer69,Rassmusen08,CP10}.

The effect of perturbations with a small parameter on local properties of dynamical systems is considered as well-studied problem~\cite{BM61,GH83,Hap93,GL94,AKN06,Hans07}. In this paper, the presence of a small parameter is not assumed. We consider autonomous systems in the plane with non-autonomous oscillatory perturbations vanishing at infinity in time.
The behaviour of the perturbed trajectories in the vicinity of the equilibrium is investigated. Note that the influence of such perturbations on the solutions of autonomous equations and systems have been discussed, for example, in~\cite{Wintner46,Atkinson54,Simon69,HL75,DF78,BD79,AK96,UYG02,PN06,BN10,Lukic14}. However, to the best of our knowledge, the bifurcations associated with decaying and nonlinear terms in systems have not been thoroughly analyzed.

The paper is organized as follows. In section~\ref{sec1}, the mathematical formulation of the problem is given and the class of damped perturbations is described. The proposed method of stability and bifurcation analysis is based on a change of variables that simplifies the system in the first asymptotic terms and on the construction of suitable Lyapunov functions. The construction of this transformation is described in section~\ref{sec2}. Possible asymptotic regimes in the system, depending on the structure of the simplified equations, are described in section~\ref{sec3}. Bifurcations associated with a change of the stability of the equilibrium are discussed in sections~\ref{sec4} and~\ref{sec5}. In section~\ref{secEX}, the proposed theory is applied to the examples of non-autonomous systems with oscillating and damped perturbations. The paper concludes with a brief discussion of the results obtained.

\section{Problem statement}
\label{sec1}

Consider the non-autonomous system of two differential equations:
\begin{gather}
    \label{FulSys}
        \frac{dx}{dt}=\partial_y H(x,y,t), \quad \frac{dy}{dt}=-\partial_x H(x,y,t)+F(x,y,t), \quad t>0.
\end{gather}
It is assumed that the functions $H(x,y,t)$ and $F(x,y,t)$ are infinitely differentiable and for every compact $D\subset\mathbb R^2$
\begin{gather*}
    \lim_{t\to\infty} H(x,y,t) = H_0(x,y), \quad \lim_{t\to\infty} F(x,y,t) = 0
\end{gather*}
for all $(x,y)\in D$. The limiting autonomous system
\begin{gather}
    \label{LimSys}
        \frac{dx}{dt}=\partial_y H_0(x,y), \quad \frac{dy}{dt}=-\partial_x H_0(x,y)
\end{gather}
is assumed to have the isolated fixed point $(0,0)$ of center type. Without loss of generality, it is assumed that
\begin{gather}\label{H0}
        H_0(x,y)=\frac{  x^2+y^2}{2}+\mathcal O(r^3), \quad r=\sqrt{x^2+y^2}\to 0,
\end{gather}
and for all $E\in(0,E_0]$, $E_0={\hbox{\rm const}}$, the level lines $\{(x,y)\in\mathbb R^2: H_0(x,y)=E\}$ lying in  $D_0=\{(x,y)\in\mathbb R^2: r\leq r_0\}$, $r_0={\hbox{\rm const}}$,  define a family of closed curves on the phase space $(x,y)$ parameterized by the parameter $E$. To each closed curve there correspond a periodic solution $x_0(t,E)$, $y_0(t,E)$ of system \eqref{LimSys} with a period $T(E)=2\pi/\omega(E)$, where $\omega(E)\neq 0$ for all $E\in[0,E_0]$ and $\omega(E)=1+\mathcal O(E)$ as $E\to 0$.  The value $E=0$ corresponds to the fixed point $(0,0)$.

The perturbations of the limiting system are described by the functions with power-law asymptotics:
\begin{eqnarray}
    \label{Has}
        H(x,y,t)&=&H_0(x,y)+\sum_{k=1}^\infty t^{-\frac kq} H_k(x,y,S(t)), \\
    \label{Fas}
        F(x,y,t)&=&\sum_{k=1}^\infty t^{-\frac kq} F_k(x,y,S(t)),
\end{eqnarray}
 as $t\to\infty$ for all $(x,y)\in D_0$, where $q\in\mathbb Z_+$, the coefficients $H_k(x,y,S)$, $F_k(x,y,S)$ are $2\pi$-periodic with respect to $S$ and
\begin{gather*}
        S(t)=\sum_{k=0}^{q-1} s_k t^{1-\frac kq}+s_q \log t+\mathcal O(t^{-\frac{1}{q}}), \quad t\to\infty, \quad s_k={\hbox{\rm const}}, \quad s_0>0.
\end{gather*}
It is also assumed that the perturbations preserve the fixed point $(0,0)$ such that
\begin{gather*}
\partial_x H(0,0,t)\equiv0, \quad  \partial_y H(0,0,t)\equiv 0,  \quad F(0,0,t)\equiv 0,\\
H_k(x,y,S)=\mathcal O(r^2), \quad F_k(x,y,S)=\mathcal O(r), \quad r\to 0, \quad \forall\, k\geq 1, S\in\mathbb R,
\end{gather*}
and satisfy the resonance condition:
\begin{gather}
\label{pcond}
s_0=\varkappa \omega(0)
\end{gather}
with some positive integer $\varkappa\neq 0$. Note that such and similar systems arise in the study of various problems of mathematical physics. For example, phase synchronization models~\cite{PRK02,LK14}, autoresonance models~\cite{LKRMS08,KGT17}, the Painlev\'{e} equations~\cite{IKNF06} and their perturbations are reduced to systems of the form \eqref{FulSys} with right-hand sides having power-law asymptotics at infinity.

The simplest example is given by the following equation:
\begin{gather}
    \label{Example}
        \frac{d^2x}{dt^2}+x= t^{-1} \Big( a \cos(s_0 t+s_1\log t) x+\lambda \frac{dx}{dt}\Big), \quad  a, \lambda \in\mathbb R.
\end{gather}
This equation in the variables $x,y= \dot x$ takes form \eqref{FulSys} with $q=1$, $H =  [x^2+y^2-a t^{-1} x^2\cos S(t)]/2$ and $F= \lambda t^{-1}y $. It can easily be checked that the unperturbed equation with $a=\lambda=0$ has $2\pi$-periodic general solution $x(t;E,\varphi)=\sqrt{2E} \cos(t+\varphi)$ with $\omega(E)\equiv 1$. Numerical analysis of equation \eqref{Example} shows that the decaying perturbations can change significantly the behaviour of solutions (see Fig.~\ref{Fig12}). In this case, the stability conditions for the trivial solution depend on the value of the parameter $\varkappa$ in \eqref{pcond}. Indeed, if $\varkappa=1$, the stability is determined by the sign of the coefficient $\lambda$ in the decaying dissipative term as in the case of $a=0$. However, if $\varkappa=2$, the stability of the trivial solution changes as the parameter $\lambda$ passes through a certain critical value $\lambda_a$. In this case, the shift of the stability boundary occurs due to the presence of a non-autonomous version of a parametric resonance~\cite{VB07,BN10}. More sophisticated examples are considered in section~\ref{secEX}.

In the general case, the behaviour of solutions to non-autonomous systems of the form \eqref{FulSys} depends on nonlinear terms of equations. The goal of this paper is to describe the stability conditions for system \eqref{FulSys} and to reveal the role of decaying perturbations in the corresponding local bifurcations, associated with a change of Lyapunov stability of the trivial solution $x(t)\equiv 0$, $y(t)\equiv 0$.
\begin{figure}
\centering
\subfigure[$\varkappa=1$ ]{\includegraphics[width=0.4\linewidth]{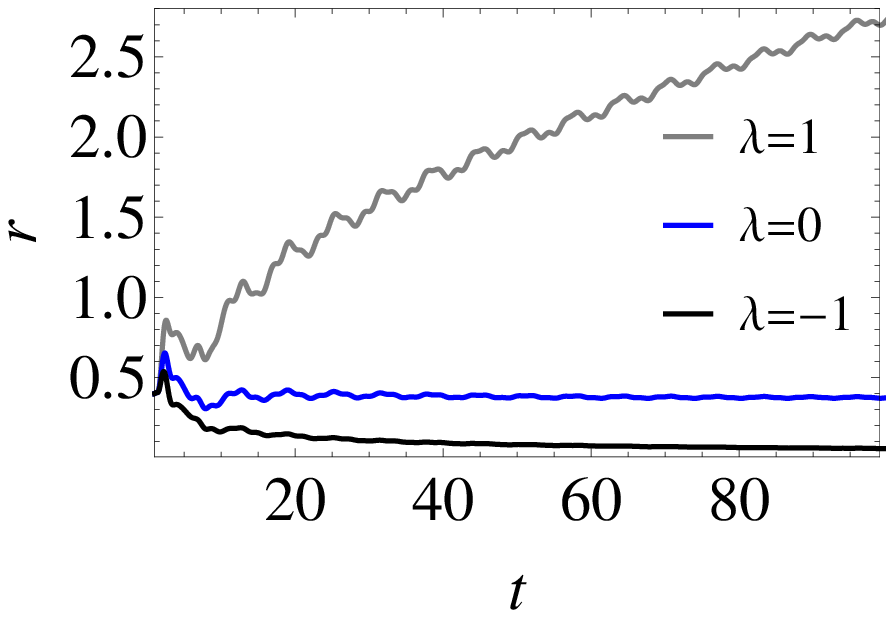}}
\hspace{4ex}
\subfigure[$\varkappa=2$]{\includegraphics[width=0.4\linewidth]{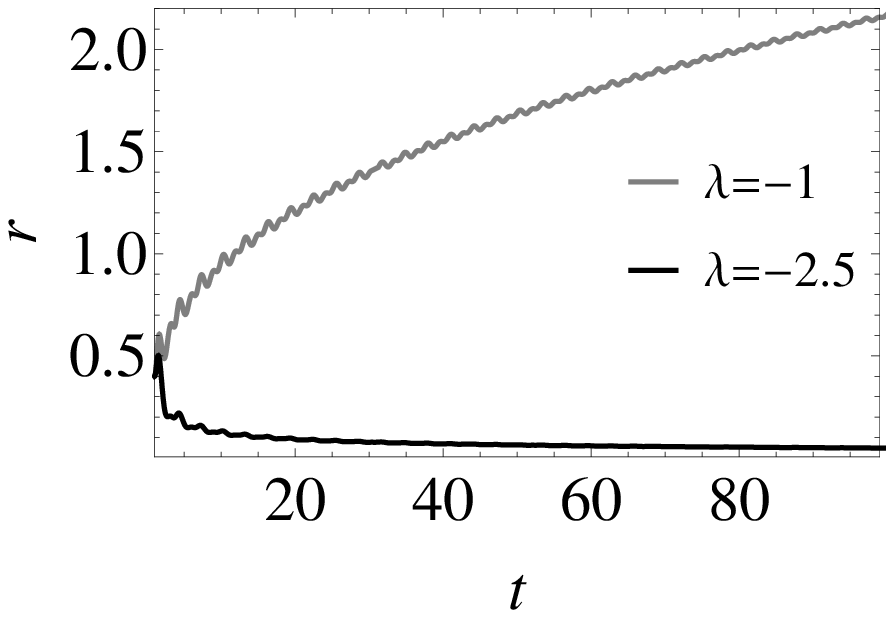}}
\caption{\small The evolution of $r(t)=\sqrt{x^2(t)+y^2(t)}$ for solutions of \eqref{Example} with $x(1)=0.4$, $y(1)=0$, $a=4$, and $s_1=1$.} \label{Fig12}
\end{figure}

\section{Change of variables}
\label{sec2}
In this section, a suitable transformation of variables is constructed to simplify system \eqref{FulSys}. First, define auxiliary $2\pi$-periodic functions $X(\varphi,E)=x_0(\varphi/\omega(E),E)$ and $Y(\varphi,E)=y_0(\varphi/\omega(E),E)$, satisfying the system:
\begin{gather*}
    \omega(E)\frac{\partial X}{\partial \varphi}=\partial_Y H_0(X,Y), \quad
    \omega(E)\frac{\partial Y}{\partial \varphi}=-\partial_X H_0(X,Y).
\end{gather*}
These functions are used for rewriting system \eqref{FulSys} in the action-angle variables $(E,\varphi)$:
\begin{gather}
    \label{exch1}
        x(t)=X (\varphi(t),E(t)), \quad y(t)=Y(\varphi(t),E(t)).
\end{gather}
In the new coordinates $(E,\varphi)$, perturbed system \eqref{FulSys} takes the form:
\begin{gather}
    \label{FulSys2}
        \frac{dE}{dt}=f(E,\varphi,t), \quad  \frac{d\varphi}{dt}=\omega(E)+g(E,\varphi,t),
\end{gather}
where
\begin{eqnarray*}
    f(E,\varphi,t) & \equiv &
    -\omega(E) \Big(\partial_\varphi  H\big(X(\varphi,E),Y(\varphi,E),t\big) - F\big(X(\varphi,E),Y(\varphi,E),t\big)\partial_\varphi X (\varphi,E)\Big), \\
    g(E,\varphi,t) & \equiv & \omega(E)\Big(\partial_E  H\big(X(\varphi,E),Y(\varphi,E),t\big) -1- F\big(X(\varphi,E),Y(\varphi,E),t\big)\partial_E X(\varphi,E)\Big)
\end{eqnarray*}
are $2\pi$-periodic functions with respect to $\varphi$. Since $(0,0)$ is the equilibrium of system \eqref{FulSys}, we see that $E=0$ is the fixed point of the first equation in \eqref{FulSys2}: $f(0,\varphi,t)\equiv 0$ for all $\varphi\in \mathbb R$ and $t>0$. Moreover, from \eqref{Has} and
\eqref{Fas} it follows that
\begin{gather*}
    f(E,\varphi,t)=\sum_{k=1}^\infty t^{-\frac kq}  f_k\big(E,\varphi,S(t)\big), \quad
    g(E,\varphi,t)=\sum_{k=1}^\infty t^{-\frac kq}  g_k\big(E,\varphi,S(t)\big), \quad t\to\infty,
\end{gather*}
where
\begin{eqnarray*}
        f_k(E,\varphi,S)&\equiv &\omega(E) \Big(-\partial_\varphi H_k\big(X(\varphi,E),Y(\varphi,E),S\big)+ F_k\big(X(\varphi,E),Y(\varphi,E),S\big)\partial_\varphi X(\varphi,E)  \Big),
        \\
        g_k(E,\varphi,S) &\equiv& \omega(E) \Big(\partial_E H_k\big(X(\varphi,E),Y(\varphi,E),S\big)- F_k\big(X(\varphi,E),Y(\varphi,E),S\big)\partial_E X(\varphi,E) \Big)
 \end{eqnarray*}
are $2\pi$-periodic functions with respect to $\varphi$ and $S$. Since $X(\varphi,E)=\sqrt{2E}\cos\varphi + \mathcal O(E^{3/2})$, $Y(\varphi,E)=-\sqrt{2E}\sin\varphi + \mathcal O(E^{3/2})$, we see that
\begin{gather*}
f_k(E,\varphi,S)=\sum_{j=2}^\infty E^{\frac j2} f_{k,j}(\varphi,S), \quad g_k(E,\varphi,S)=\sum_{j=0}^\infty E^{\frac j2} g_{k,j}(\varphi,S)
\end{gather*}
as $E\to 0$ uniformly for all $(\varphi,S)\in\mathbb R^2$. From the identity $H_0(X(\varphi,E),Y(\varphi,E))\equiv E$ it follows that
\begin{gather*}
    \begin{vmatrix}
        \partial_\varphi X & \partial_E X\\
        \partial_\varphi Y& \partial_E Y
    \end{vmatrix} = \frac{1}{\omega(E)}\neq 0.
\end{gather*}
The last inequality guarantees the reversibility of transformation \eqref{exch1} for all $E\in [0,E_0]$ and $\varphi\in \mathbb R$.

To study the influence of oscillating disturbances on the stability of the fixed point $E=0$, we consider the following change of variables:
\begin{gather}\label{epsdel}
    E(t)=t^{-\frac{l}{q}}\mathcal E(t), \quad \varphi(t)= \theta(t)+ \varkappa^{-1}S(t)
\end{gather}
as $t\geq 1$, where $l$ is some integer. We take $l=0$ if $\omega(E)\equiv {\hbox{\rm const}}$ and $l\geq 1$ if $\omega(E)\not\equiv{\hbox{\rm const}}$.  It can easily be checked that in new variables $(\mathcal E,\theta)$ system \eqref{FulSys2} takes the form:
\begin{gather}
\label{FulSys2m}
    \frac{d\mathcal E}{dt}=\mathcal F(\mathcal E,\theta,t)+t^{-1}\frac{l}{q}\mathcal E, \quad  \frac{d\theta}{dt}=\mathcal G(\mathcal E,\theta,t),
\end{gather}
where
\begin{gather*}
\mathcal F(\mathcal E,\theta,t)\equiv t^{\frac{l}{q}}f\big(t^{-\frac{l}{q}}\mathcal E,\theta+\varkappa^{-1}S(t),t\big), \quad
\mathcal G(\mathcal E,\theta,t)\equiv \omega\big(t^{-\frac{l}{q}}\mathcal E\big)- \varkappa^{-1}S'(t)+g\big(t^{-\frac{l}{q}}\mathcal E,\theta+\varkappa^{-1}S(t),t\big).
\end{gather*}
The right-hands sides of \eqref{FulSys2m} have the following asymptotics:
\begin{eqnarray*}
    \mathcal F(\mathcal E,\theta,t)=
        \sum_{k=2}^\infty t^{-\frac {k}{2q}}\mathcal F_{k}\big(\mathcal E,\theta,S(t)\big)  , \quad
    \mathcal G(\mathcal E,\theta,t)=\sum_{k=2}^\infty t^{-\frac {k}{2q}}  \mathcal G_k\big(\mathcal E,\theta,S(t)\big)
\end{eqnarray*}
as $t\to\infty$ for all $0\leq \mathcal E\leq {\hbox{\rm const}}$, $\theta\in\mathbb R$, where the coefficients $\mathcal F_k\big(\mathcal E,\theta,S\big)$ and $\mathcal G_k\big(\mathcal E,\theta,S\big)$ are $2\pi$-periodic with respect to $\theta$ and $2\pi \varkappa$-periodic with respect to $S$.
 In particular, if $l=1$, we have
\begin{align*}
 &    \mathcal F_k\big(\mathcal E,\theta,S\big)= \sum_{\substack{
            2i+j=k\\
            i\geq 1, \, j\geq 0
                }}  f_{i,j+2}\big(\theta+\varkappa^{-1} S,S\big)\mathcal E^{1+\frac{j}{2}},\\
&    \mathcal G_k\big(\mathcal E,\theta,S\big)=\frac{\omega^{(k/2)}(0)}{(k/2)!}\mathcal E^{\frac{k}{2}}-\varkappa^{-1}\Big(1-\frac{k}{2q}+\delta_{k,2q}\Big)s_{k/2} +\sum_{\substack{
            2i+j=k\\
            i\geq 1,\, j\geq 0
                }}  g_{i,j}\big(\theta+\varkappa^{-1} S,S\big) \mathcal E^{\frac j 2},
\end{align*}
where it is assumed that $s_j=0$ for $j>q$, $\omega^{(k/2)}(0)=0$ for odd $k$, and $\delta_{k,q}$ is the Kronecker delta. Since $S(t)$ changes rapidly in comparison to possible variations of $\mathcal E$ and $\theta$ at infinity, we average system  \eqref{FulSys2m} over $S$ in order to obtain simplified equations giving the first approximation to solutions. This technique is usually used in perturbation theory (see, for example,~\cite{BM61,AKN06,Hap93,AN84,BDP01,DM10}).

Consider the near-identity transformation of the variables $\mathcal E$ and $\theta$ in the form:
\begin{gather}\label{VF}
    V_N(\mathcal E,\theta,t)=\mathcal E+\sum_{k=2}^N t^{-\frac {k}{2q}} v_k\big(\mathcal E,\theta,S(t)\big), \quad
    \Psi_M(\mathcal E,\theta,t)=\theta+\sum_{k=2}^M t^{-\frac {k}{2q}} \psi_k\big(\mathcal E,\theta,S(t)\big).
\end{gather}
The coefficients $v_k(\mathcal E,\theta,S)$ and $\psi_k(\mathcal E,\theta,S)$ are chosen in such a way that the right-hand sides of the equations for the new variables $v(t)\equiv V_N(\mathcal E(t),\varphi(t),t)$  and $\psi(t)\equiv \Psi_M(\mathcal E(t),\theta(t),t)$ do not depend on $S$ at least in the first terms of the asymptotics:
\begin{gather}
    \label{VEq}
       \frac{dv}{dt}=\sum_{k=2}^N t^{-\frac {k}{2q}} \Lambda_k(v,\psi) + \tilde\Lambda_{N}(v,\psi,t), \quad
        \frac{d\psi}{dt}=\sum_{k=2}^M t^{-\frac {k}{2q}} \Omega_k(v,\psi) + \tilde \Omega_{M}(v,\psi,t),
\end{gather}
where $\tilde\Lambda_{N}(v,\psi,t)=\mathcal O(t^{-{(N+1)}/{2q}})$ and $\tilde\Omega_{M}(v,\psi,t)=\mathcal O(t^{-{(M+1)}/{2q}})$ as $t\to\infty$ for all $\psi\in\mathbb R$ and $0\leq v\leq {\hbox{\rm const}}$. To find suitable coefficients $v_k(\mathcal E,\theta,S)$, $\psi_k(\mathcal E,\theta,S)$ and to derive the functions $\Lambda_k(v,\psi)$, $\Omega_k(v,\psi)$, we calculate the total derivative of $V_N(\mathcal E,\theta,t)$ and $\Psi_M(\mathcal E,\theta,t)$ with respect to $t$ along the trajectories of system \eqref{FulSys2m}:
\begin{gather}\label{DFS}
    \begin{split}\displaystyle
    \frac{d}{dt}
        \begin{pmatrix} V_N \\ \Psi_M \end{pmatrix}\Big|_{\eqref{FulSys2m}}
        &
        \displaystyle = \Big(\big(\mathcal F(\mathcal E,\theta,t)+t^{-1}\frac{l}{q}\mathcal E\big)\partial_{\mathcal E}+\mathcal G(\mathcal E,\theta,t) \partial_\theta +\partial_t\Big)
        \begin{pmatrix} V_N \\ \Psi_M \end{pmatrix} \\
        &=  \sum_{k=2}^\infty t^{-\frac{k}{2q}}
        \left[
            \begin{pmatrix}
                \mathcal F_k +{\displaystyle \delta_{k,2q}\frac{l}{q}\mathcal E}\\
                \mathcal G_k
        \end{pmatrix} + s_0 \partial_S
        \begin{pmatrix} v_k \\ \psi_k \end{pmatrix} -\frac{k-2q}{2q}
        \begin{pmatrix} v_{k-2q} \\ \psi_{k-2q}\end{pmatrix} \right]\\
        & + \sum_{k=4}^\infty t^{-\frac {k}{2q}}\sum_{i+j=k} \Big(\big(\mathcal F_{j}+\delta_{j,2q}\frac{l}{q}\mathcal E\big)\partial_{\mathcal E}  + \mathcal G_{j}\partial_\theta +s_{  j/2}\Big(1-\frac{j}{2q}+\delta_{j,2q}\Big)\partial_S\Big)\begin{pmatrix} v_i \\ \psi_i \end{pmatrix},
    \end{split}
\end{gather}
where it is assumed that $v_i\equiv 0$, $\psi_j\equiv 0$, $\mathcal F_k\equiv 0$, $\mathcal G_k\equiv 0$ for $i,j,k<2$, $i>N$, $j>M$. Substituting \eqref{VF} into the right-hand side of \eqref{VEq} and matching the result with \eqref{DFS}, we obtain the chain of differential equations:
\begin{gather}\label{RSys}
   s_0 \partial_S \begin{pmatrix} v_k \\ \psi_k \end{pmatrix} =
\begin{pmatrix}
        \Lambda_k({\mathcal E},\theta) -\mathcal F_k(\mathcal E,\theta,S)- {\displaystyle \delta_{k,2q}\frac{l}{q}\mathcal E}+\hat{\mathcal F}_k(\mathcal E,\theta,S)\\
        \Omega_k({\mathcal E},\theta) -\mathcal G_k(\mathcal E,\theta,S) +\hat{\mathcal G}_k(\mathcal E,\theta,S)
\end{pmatrix},
\quad k\geq 2,
\end{gather}
where the functions $\hat{\mathcal F}_k$, $\hat{\mathcal G}_k$ are expressed through $\{v_i,\psi_i,\Lambda_i,\Omega_i\}_{i=2}^{k-1}$. In particular, $\hat{\mathcal F}_{2,3}\equiv 0$, $\hat{\mathcal G}_{2,3}\equiv 0$,
\begin{eqnarray*}
\begin{pmatrix}\hat{\mathcal F}_{4}\\ \hat{\mathcal G}_{4}  \end{pmatrix}
        &\equiv &
     \Big( v_2 \partial_{\mathcal E}+ \psi_2 \partial_\theta\Big)
    \begin{pmatrix} \Lambda_2\\  \Omega_2  \end{pmatrix}   \\
        &       &
    - \Big(\big(\mathcal F_{2}+\delta_{1,q}\frac{l}{q}\mathcal E\big) \partial_{\mathcal E}  + \mathcal G_{2} \partial_\theta +s_{1}\big(1-\frac{1}{q}+\delta_{1,q}\big)\partial_S\Big)
     \begin{pmatrix}v_2 \\  \psi_2  \end{pmatrix}+\frac{2-q}{q}
      \begin{pmatrix} v_{4-2q} \\ \psi_{4-2q}\end{pmatrix},\\
\begin{pmatrix}\hat{\mathcal F}_{5}\\ \hat{\mathcal G}_{5}  \end{pmatrix}
        &\equiv &
     \sum_{i+j=5}\Big( v_i \partial_{\mathcal E}+ \psi_i \partial_\theta\Big)
    \begin{pmatrix} \Lambda_j\\  \Omega_j  \end{pmatrix}   \\
        &&-\sum_{i+j=5} \Big(\big(\mathcal F_{j}+\delta_{j,2q}\frac{l}{q}\mathcal E\big) \partial_{\mathcal E}  + \mathcal G_{j} \partial_\theta +s_{j/2}\big(1-\frac{j}{2q}+\delta_{j,2q}\big)\partial_S\Big)  \begin{pmatrix}v_i \\  \psi_i  \end{pmatrix}+\frac{5-2q}{2q}\begin{pmatrix} v_{5-2q} \\ \psi_{5-2q}\end{pmatrix},\\
\begin{pmatrix}\hat{\mathcal F}_{6}\\ \hat{\mathcal G}_{6}  \end{pmatrix}
        &\equiv &
     \sum_{i+j=6}\Big( v_i \partial_{\mathcal E}+ \psi_i \partial_\theta\Big)
    \begin{pmatrix} \Lambda_j\\  \Omega_j  \end{pmatrix}   +
       \frac{1}{2}\Big( v_2^2 \partial_{\mathcal E}^2+ 2v_2\psi_2 \partial_{\mathcal E} \partial_\psi+ \psi_2^2\partial_\theta^2\Big)
    \begin{pmatrix} \Lambda_2\\  \Omega_2  \end{pmatrix}  \\
        &&-\sum_{i+j=6} \Big(\big(\mathcal F_{j}+\delta_{j,2q}\frac{l}{q}\mathcal E\big) \partial_{\mathcal E}  + \mathcal G_{j} \partial_\theta +s_{j/2}\big(1-\frac{j}{2q}+\delta_{j,2q}\big)\partial_S\Big)  \begin{pmatrix}v_i \\  \psi_i  \end{pmatrix}+\frac{3-q}{q}\begin{pmatrix} v_{6-2q} \\ \psi_{6-2q}\end{pmatrix},\\
   \begin{pmatrix}\hat{\mathcal F}_{k}\\ \hat{\mathcal G}_{k}  \end{pmatrix} &\equiv &
    \sum_{
        \substack{
            z+a_1+\ldots+ia_i+b_1+\ldots+jb_j=k\\
            a_1+\ldots+a_i+b_1+\ldots+b_j\geq 1
                }
            } C_{ij}^{ab}   v_1^{a_1}\cdots v_{i}^{a_i}\psi_1^{b_1}  \cdots \psi_{j}^{b_j} \partial^{a_1+\ldots+a_i}_{\mathcal E}\partial^{b_1+\ldots+b_j}_\theta  \begin{pmatrix} \Lambda_z \\  \Omega_z  \end{pmatrix}  \\
            && - \sum_{i+j={k}} \Big(\big(\mathcal F_{j}+\delta_{j,2q}\frac{l}{q} \mathcal E\big)\partial_{\mathcal E}  + \mathcal G_{j} \partial_\theta +s_{j/2}\big(1-\frac{j}{2q}+\delta_{j,2q}\big)\partial_S\Big)  \begin{pmatrix}v_i \\  \psi_i  \end{pmatrix}+\frac{k-2q}{2q}\begin{pmatrix} v_{k-2q} \\ \psi_{k-2q}\end{pmatrix} ,
\end{eqnarray*}
where $C_{ij}^{ab}={\hbox{\rm const}}$.
Define
\begin{gather}
    \label{Lambda}
        \begin{split}
            &\Lambda_k(v,\psi)=\langle\mathcal F_k(v,\psi,S) -  \hat{\mathcal F}_k (v,\psi,S) \rangle_{\varkappa S}+\delta_{k,2q}\frac{l}{q} v,\\
            &\Omega_k(v,\psi)=\langle\mathcal G_k(v,\psi,S)   -  \hat{\mathcal G}_k (v,\psi,S) \rangle_{\varkappa S},
        \end{split}
\end{gather}
where
\begin{gather*}
    \langle  Z(v,\psi,S) \rangle_{\varkappa S}\stackrel{def}{=} \frac{1}{2\pi}\int\limits_0^{2\pi}   Z(v,\psi,\varkappa S) \, dS= \frac{1}{2\pi\varkappa}\int\limits_0^{2\pi\varkappa}   Z(v,\psi,s) \, ds.
\end{gather*}
From \eqref{Lambda} it follows that the functions $\Lambda_k(v,\psi)$ and $\Omega_k(v,\psi)$ are $2\pi$-periodic in $\psi$ such that $\Lambda_k(v,\psi)=\mathcal O(v)$ and $\Omega_k(v,\psi)=\mathcal O(1)$ as $v\to 0$ uniformly for all $\psi\in\mathbb R$.

Hence, for every $k\geq 2$ the right-hand side of \eqref{RSys} is $2\pi\varkappa$-periodic with respect to $S$ with zero average. Integrating \eqref{RSys} yields
\begin{gather*}
    \begin{pmatrix} v_k (\mathcal E,\theta,S)   \\ \psi_k (\mathcal E,\theta,S)  \end{pmatrix} = - \frac{1}{ s_0 }\int\limits_{0}^S \begin{pmatrix}\{ \mathcal F_k (\mathcal E,\theta,s)  - \hat{\mathcal F}_k (\mathcal E,\theta,s) \}_{\varkappa s}  \\  \{\mathcal G_k  (\mathcal E,\theta,s)- \hat{\mathcal G}_k (\mathcal E,\theta,s) \}_{\varkappa s}    \end{pmatrix} \, ds+ \begin{pmatrix} \tilde v_k(\mathcal E,\theta) \\ \tilde \psi_k(\mathcal E,\theta) \end{pmatrix},
\end{gather*}
where $\{Z\}_{\varkappa s}:=Z - \langle Z \rangle_{\varkappa s}$, and the functions $\tilde v_k, \tilde\psi_k$ are chosen such that $\langle v_k\rangle_{\varkappa S}=\langle\psi_k\rangle_{\varkappa S}=0$.
It can easily be checked that the functions $v_k(\mathcal E,\theta,S)$ and $\psi_k(\mathcal E,\theta,S)$ are smooth and periodic with respect to $\theta$ and $S$ such that $v_k(0,\theta,S)\equiv 0$.

The remainders $\tilde\Lambda_{N}(v,\psi,t)$ and $\tilde\Omega_{M}(v,\psi,t)$  have the following form:
\begin{eqnarray*}
   \tilde\Lambda_{N}
            & \equiv  &\sum_{k=N+1}^\infty t^{-\frac{k}{2q}}\Big(\mathcal F_k  -\frac{k-2q}{2q} v_{k-2q}\Big) \\
            &           & + \sum_{k=N+1}^\infty t^{-\frac{k}{2q}}\sum_{i+j=k} \Big(\big(\mathcal F_{j}+\delta_{j,2q}\frac{l}{q}\mathcal E\big)\partial_{\mathcal E}  + \mathcal G_{j} \partial_\theta +s_{j/2}\big(1-\frac{j}{2q}+\delta_{j,2q}\big)\partial_S\Big) v_i,\\
  \tilde\Omega_{M}  &\equiv& \sum_{k=M+1}^\infty t^{-\frac{k}{2q}}
    \Big(\mathcal G_k  -\frac{k-2q}{2q}\psi_{k-2q}\Big) \\
            &           &  + \sum_{k=M+1}^\infty t^{-\frac{k}{2q}} \sum_{i+j=k} \Big(\big(\mathcal F_{j} +\delta_{j,2q}\frac{l}{q}\mathcal E\big)\partial_{\mathcal E}  + \mathcal G_{j} \partial_\theta +s_{j/2}\big(1-\frac{j}{2q}+\delta_{j,2q}\big)\partial_S\Big)\psi_i .
\end{eqnarray*}
It is clear that $\tilde\Lambda_{N}(0,\psi,t)\equiv 0$ and $\tilde\Lambda_{N}(v,\psi,t) =\mathcal O(t^{-(N+1)/2q})$, $\tilde\Omega_{M}(v,\psi,t) =\mathcal O(t^{-(M+1)/2q})$ as $t\to\infty$ for all $\psi\in \mathbb R$ and $0\leq v\leq {\hbox{\rm const}}$.

\begin{Rem}
Let $l=1$ and $m_1,m_2\geq 2$ be the least natural numbers such that $\Omega_{m_1}(v,\psi)\not\equiv 0$, $\Omega_j(v,\psi)\equiv 0$ for $m_1<j<m_2$ and $\Omega_{m_2}(v,\psi)\not\equiv 0$. If $\Omega_{m_1}(0,\psi) \equiv 0$ and $\Omega_{m_2}(0,\psi)\not\equiv 0$, we take $l=1+m_2-m_1$ in \eqref{epsdel}. Then, in new variables, $\Omega_{m_0}(0,\psi)\not\equiv 0$, where $m_0\geq 2$ is the least natural number such that $\Omega_{m_0}(v,\psi)\not\equiv 0$ in \eqref{VEq}
\end{Rem}

From \eqref{VF} it follows that for all $\epsilon\in (0,1)$ there exists $t_0>1$ such that
\begin{gather}
\label{estVE}
    \begin{split}
    &  |V_N(\mathcal E,\theta,t)-\mathcal E|\leq \epsilon \mathcal E, \quad |\partial_{\mathcal E} V_N(\mathcal E,\theta,t)-1|\leq \epsilon, \quad  |\partial_{\mathcal \theta} V_N(\mathcal E,\theta,t)|\leq \epsilon, \\
    & |\Psi_M(\mathcal E,\theta,t)-\theta|\leq \epsilon,\quad |\partial_{\mathcal E} \Psi_M(\mathcal E,\theta,t)|\leq \epsilon, \quad  |\partial_\theta \Psi_M(\mathcal E,\theta,t)-1|\leq \epsilon
    \end{split}
 \end{gather}
for all $\theta\in \mathbb R$, $t\geq t_0$ and $\mathcal E\in [0, \mathcal E_0]$ with some $\mathcal E_0={\hbox {\rm const}}$. Hence, the mapping $(\mathcal E,\theta)\mapsto (v,\psi)$ is invertible for all $t\geq t_0$, $\psi\in\mathbb R$ and $v\in [0,\Delta_0]$ with $\Delta_0=(1-\epsilon)\mathcal E_0$. We choose $\mathcal E_0= E_0 t_0^{l/2q}$, then for all $t\geq t_0$ the transformation described by \eqref{epsdel} is valid for all $0\leq \mathcal E\leq \mathcal E_0$ and $\theta\in\mathbb R$.

Let $\mathcal D_0=\{(x,y): H_0(x,y)\leq E_0\}\cap D_0$. Then, we have the following.
\begin{Lem}
There exist $l\geq 0$ and $t_0>1$ such that for all $t\geq t_0$ and $(x,y)\in \mathcal D_0$  system \eqref{FulSys} can be transformed into \eqref{VEq} by the transformations \eqref{exch1}, \eqref{epsdel} and \eqref{VF}.
\end{Lem}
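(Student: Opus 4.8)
The plan is to assemble the three successive changes of variables built in this section and to check that their domains of validity are compatible, so that their composition is a legitimate transformation of \eqref{FulSys} on $\mathcal D_0$ for all large $t$. Thus the proof is essentially a matter of collecting the constructions above and matching up where each of them is defined.

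First I would record that on $\mathcal D_0$ one has $H_0(x,y)\in[0,E_0]$, so the hypotheses on the level curves of $H_0$ guarantee that $X(\varphi,E)$ and $Y(\varphi,E)$ are smooth on $[0,E_0]\times\mathbb R$ and that the Jacobian $\partial(X,Y)/\partial(\varphi,E)=\omega(E)^{-1}$ is nonzero there; hence \eqref{exch1} is a smooth invertible change of variables carrying \eqref{FulSys} into \eqref{FulSys2} with $E\in[0,E_0]$, together with the expansions of $f$ and $g$ in powers of $t^{-1/q}$ and $\sqrt{E}$ already derived. Next I would fix $l$: take $l=0$ when $\omega(E)$ is constant and otherwise $l\ge1$, enlarged as in the Remark so that the leading averaged coefficient $\Omega_{m_0}$ in \eqref{VEq} does not vanish at $\mathcal E=0$. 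Substituting \eqref{epsdel} gives \eqref{FulSys2m} with the claimed asymptotics, valid for $t\ge1$ and $0\le\mathcal E\le E_0 t^{l/2q}$, because then $E=t^{-l/q}\mathcal E$ stays in $[0,E_0]$.

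The core of the argument is the averaging step. I would define $\Lambda_k,\Omega_k$ by \eqref{Lambda} and argue by induction on $k\ge2$: assuming $v_i,\psi_i,\Lambda_i,\Omega_i$ are already constructed for $i<k$ with the stated smoothness, periodicity in $\theta$ and $S$, and vanishing of $v_i$ at $\mathcal E=0$, the functions $\hat{\mathcal F}_k,\hat{\mathcal G}_k$ inherit these properties, so that the right-hand side of \eqref{RSys} is $2\pi\varkappa$-periodic in $S$ with zero $S$-average by the very choice \eqref{Lambda}; integrating in $S$ then yields $v_k,\psi_k$ with the required properties (in particular $v_k(0,\theta,S)\equiv0$, since $\mathcal F_k$ and every term of $\hat{\mathcal F}_k$ vanish at $\mathcal E=0$). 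Matching the total derivative \eqref{DFS} of \eqref{VF} along \eqref{FulSys2m} with \eqref{VEq} then produces exactly the remainders $\tilde\Lambda_N,\tilde\Omega_M$ displayed above, of orders $t^{-(N+1)/2q}$ and $t^{-(M+1)/2q}$, with $\tilde\Lambda_N(0,\psi,t)\equiv0$.

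Finally I would close the domain bookkeeping: given $\epsilon\in(0,1)$, choose $t_0>1$ so large that \eqref{estVE} holds for $t\ge t_0$, $\theta\in\mathbb R$ and $\mathcal E\in[0,\mathcal E_0]$ with $\mathcal E_0=E_0 t_0^{l/2q}$; then the near-identity map $(\mathcal E,\theta)\mapsto(v,\psi)$ is a diffeomorphism onto $[0,\Delta_0]\times\mathbb R$, $\Delta_0=(1-\epsilon)\mathcal E_0$, for each such $t$, and the scaling \eqref{epsdel} is admissible for all $0\le\mathcal E\le\mathcal E_0$ when $t\ge t_0$. Composing the three maps gives the assertion. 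The only genuinely delicate point is the inductive verification in the averaging step that $\hat{\mathcal F}_k,\hat{\mathcal G}_k$ depend only on the previously constructed data $\{v_i,\psi_i,\Lambda_i,\Omega_i\}_{i<k}$ and preserve the periodicity and the vanishing at $\mathcal E=0$; once that is in place, the equations \eqref{RSys} are solvable order by order and the rest is matching of domains and collecting asymptotic orders.
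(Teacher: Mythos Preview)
Your proposal is correct and follows exactly the paper's approach: the paper does not give a separate proof of this lemma, but states it as a summary of the constructions carried out in the preceding paragraphs of Section~\ref{sec2} (the action--angle change \eqref{exch1}, the scaling \eqref{epsdel} with the choice of $l$, the averaging transformation \eqref{VF} solved order by order via \eqref{RSys}--\eqref{Lambda}, and the domain estimates \eqref{estVE}). Your write-up simply organizes those same constructions and domain checks into a proof, so it coincides with the paper's argument.
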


It is readily seen that the stability of the trivial solution $v(t)\equiv 0$ of system \eqref{VEq} ensures the stability of the fixed point $(0,0)$ in system \eqref{FulSys}. However, if $v(t)\equiv 0$ is unstable and $l=1$, then due to the damping factor in \eqref{epsdel} some additional estimates or assumptions may be required to guarantee the instability of the equilibrium in the original variables $(x,y)$.

\section{Two asymptotic regimes}
\label{sec3}
Let $2\leq n,m\leq 2q$ be the least natural numbers such that $\Lambda_n(v,\psi)\not\equiv 0$ and $\Omega_m(v,\psi)\not\equiv 0$. We take  $N\geq n$, $M=m$ and system \eqref{VEq} takes the form:
\begin{gather}\label{simsys}
    \frac{dv}{dt}= \sum_{k=n}^N t^{-\frac {k}{2q}} \Lambda_k(v,\psi) + \tilde\Lambda_{N}(v,\psi,t),   \quad
    \frac{d\psi}{dt}=  t^{-\frac {m}{2q}} \Omega_m(v,\psi) +    \tilde\Omega_{m}(v,\psi,t),
\end{gather}
Note that system \eqref{simsys} admits at least two asymptotic regimes with $v(t)$ in the vicinity of the zero.
One class of solutions has the phase difference $\psi(t)$ tending to a constant at infinity, while another has the unboundedly growing phase difference: $|\psi(t)|\to\infty$ as $t\to\infty$. The intermediate situation with bounded $\psi(t)$ is not considered in this paper. The type of solutions depends on the properties of the function $\Omega_m(v,\psi)$.

Let us assume that
\begin{gather*}
    \Omega_m(v,\psi)=\omega_{m0}(\psi)+\omega_{m1}(\psi)\sqrt v+\mathcal O(v)
\end{gather*}
as $v\to 0$ for all $\psi\in\mathbb R$, where $\omega_{m0}(\psi)\not\equiv 0$ and $\omega_{m1}(\psi)$ are $2\pi$-periodic functions.  Consider two different cases:
\begin{align}\label{as01}
& \exists\, \psi_\ast\in\mathbb R: \quad \omega_{m0}(\psi_\ast)=0, \quad \vartheta_m:=\omega_{m0}'(\psi_\ast)<0; \\
\label{as02}
&\exists\, \Delta_\ast\in (0,\Delta_0]: \quad \Omega_{m}(v,\psi)\neq 0 \quad \forall\, v\in [0,\Delta_\ast], \ \ \psi\in\mathbb R.
\end{align}
We have the following
\begin{Lem}
\label{Lem01}
Let assumption \eqref{as01} holds. Then system \eqref{simsys} has a particular solution $v(t)\equiv 0$, $\psi(t)\equiv\hat\psi(t)$ such that $\hat\psi(t)= \psi_\ast+\mathcal O(t^{-\kappa})$ as $t\to\infty$ with $\kappa={\hbox{\rm const}}>0$.
\end{Lem}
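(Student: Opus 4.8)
The plan is to reduce the statement to a scalar non-autonomous equation near a hyperbolic (in the slow time) equilibrium of its averaged leading part, and then to produce the sought solution as a fixed point of a Duhamel operator. First I would observe that $\{v=0\}$ is invariant under \eqref{simsys}: the relations $\Lambda_k(v,\psi)=\mathcal O(v)$ as $v\to0$ give $\Lambda_k(0,\psi)\equiv0$ for every $k$, while $\tilde\Lambda_N(0,\psi,t)\equiv0$ by construction. Hence it suffices to find, for $t\geq t_0$ with $t_0$ large, a solution $\hat\psi(t)$ of the reduced equation
\begin{gather*}
\frac{d\psi}{dt}=t^{-\frac{m}{2q}}\,\omega_{m0}(\psi)+r(\psi,t),\\
r(\psi,t):=\tilde\Omega_m(0,\psi,t)=\mathcal O\big(t^{-\frac{m+1}{2q}}\big)
\end{gather*}
(here $\Omega_m(0,\psi)\equiv\omega_{m0}(\psi)$), and to show that $\hat\psi(t)\to\psi_\ast$ at an algebraic rate; the pair $(v,\psi)=(0,\hat\psi(t))$ is then the required particular solution of \eqref{simsys}.

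Next I would set $\xi=\psi-\psi_\ast$. Since $\omega_{m0}$ is smooth with $\omega_{m0}(\psi_\ast)=0$ and $\omega_{m0}'(\psi_\ast)=\vartheta_m<0$, the reduced equation takes the form $\dot\xi=\vartheta_m t^{-m/2q}\xi+G(\xi,t)$, where $G(\xi,t)=\mathcal O\big(t^{-m/2q}\xi^2+t^{-(m+1)/2q}\big)$ and $\partial_\xi G(\xi,t)=\mathcal O\big(t^{-m/2q}|\xi|+t^{-(m+1)/2q}\big)$ uniformly for $|\xi|$ small and $t\geq t_0$, the bound on $\partial_\xi G$ coming from differentiating the expansion of $\tilde\Omega_m$ in $\psi$. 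Introduce $\Phi(t)=\exp\big(\vartheta_m\int_{t_0}^t s^{-m/2q}\,ds\big)$; since $m\leq2q$ the exponent tends to $-\infty$, so $\Phi$ is positive and decreasing, $\Phi(t_0)=1$, $\Phi(t)\to0$, and $0<\Phi(t)/\Phi(s)\leq1$ for $t_0\leq s\leq t$. Writing the equation with $\xi(t_0)=0$ as the integral equation $\xi(t)=\int_{t_0}^t\big(\Phi(t)/\Phi(s)\big)G(\xi(s),s)\,ds$, I would solve it by the contraction mapping principle on the ball $\{\xi\in C([t_0,\infty)):\|\xi\|_\infty\leq\delta\}$ with $\delta$ small and $t_0$ large. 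The mechanism rests on two estimates: first, $\int_{t_0}^t\big(\Phi(t)/\Phi(s)\big)s^{-m/2q}\,ds=|\vartheta_m|^{-1}\big(1-\Phi(t)\big)\leq|\vartheta_m|^{-1}$, which — because the prefactors of the quadratic and Lipschitz parts of $G$ can be made small — yields invariance of the ball and the contraction property; second, $\int_{t_0}^t\big(\Phi(t)/\Phi(s)\big)s^{-(m+1)/2q}\,ds\leq C\,t^{-\kappa}$ for some $\kappa>0$, which absorbs the forcing $r$ and provides the decay $\hat\psi(t)=\psi_\ast+\mathcal O(t^{-\kappa})$. A continuous fixed point is automatically $C^1$ and solves the ODE, which closes the argument.

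I expect the second estimate to be the only real work. Using $\big(\Phi(t)/\Phi(s)\big)s^{-m/2q}=|\vartheta_m|^{-1}\partial_s\big(\Phi(t)/\Phi(s)\big)$ and integrating by parts, it reduces to
\[
\int_{t_0}^t\frac{\Phi(t)}{\Phi(s)}\,s^{-\frac{m+1}{2q}}\,ds
=\frac{1}{|\vartheta_m|}\Big(t^{-\frac1{2q}}-t_0^{-\frac1{2q}}\Phi(t)\Big)
+\frac{1}{2q|\vartheta_m|}\int_{t_0}^t s^{-1-\frac1{2q}}\,\frac{\Phi(t)}{\Phi(s)}\,ds,
\]
and one bounds the last integral case by case. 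If $m<2q$, then $\Phi(t)/\Phi(s)$ is super-exponentially small on $[t_0,t/2]$ and $\leq1$ on $[t/2,t]$, giving $\kappa=1/(2q)$. If $m=2q$, then $\Phi(t)/\Phi(s)=(s/t)^{|\vartheta_m|}$ and an explicit computation gives $\kappa=\min\{1/(2q),|\vartheta_m|\}$, with an extra logarithmic factor when $|\vartheta_m|=1/(2q)$; in every case $\kappa>0$, which is all the statement requires. The remaining points — the $\partial_\xi G$ estimate from the uniform differentiable asymptotics of $\tilde\Omega_m$, and the regularity of the fixed point — are routine.
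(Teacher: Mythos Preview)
Your proof is correct and takes a genuinely different route from the paper's. The paper argues via a Lyapunov/comparison method: after substituting $\psi=\psi_\ast+\phi$ it uses $\ell=|\phi|$ and derives the differential inequality $d\ell/dt\le t^{-m/2q}(-|\vartheta_m|\ell/2+M_1t^{-1/2q})$, then integrates it (Gronwall) to get the same case split $m<2q$, $m=2q$ with $q|\vartheta_m|\gtrless1$ that you recover. Your approach is instead constructive: Duhamel's formula with the fundamental solution $\Phi$ of the linearized leading part, followed by a contraction mapping on $C([t_0,\infty))$. Both yield the identical decay rates $\kappa=\min\{1/(2q),|\vartheta_m|\}$ (up to a logarithm at equality). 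What you gain is a cleaner existence statement --- the particular solution is produced as a fixed point rather than inferred from an a priori bound plus local existence --- at the modest cost of needing the $\partial_\xi$ estimate on the remainder, which the paper's nonsmooth $\ell=|\phi|$ argument sidesteps. The paper's method is shorter and in keeping with the Lyapunov-function machinery used throughout the rest of the article; your integral-equation argument is more self-contained and would transplant directly to settings where a good Lyapunov function is less obvious.
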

\begin{proof}
Consider the second equation of \eqref{simsys} with $v\equiv 0$.
It can easily be checked that $\psi_\ast$ is a stable equilibrium of the corresponding reduced equation: $d\psi/dt=t^{-m/2q}\Omega_{m}(0,\psi)$. Let us show that this solution is stable under the persistent perturbation $\tilde\Omega_{m}(0,\psi,t)$.
The change of the variable $\psi(t)=\psi_\ast+\phi(t)$ leads to the following equation:
\begin{gather}\label{phieq}
    \frac{d\phi}{dt}= t^{-\frac {m}{2q}} \Omega_m(0,\psi_\ast+\phi)+\tilde\Omega_{m}(0,\psi_\ast+\phi,t).
\end{gather}
 Consider  $\ell(t)=|\phi|$ as a Lyapunov function candidate for \eqref{phieq}. Its total derivative is given by
\begin{gather*}
    \frac{d\ell}{dt}\Big|_{\eqref{phieq}}=t^{-\frac {m}{2q}}\omega_m(\psi_\ast+\phi)    {\hbox{\rm sgn}} (\phi)+\tilde\Omega_{m}(0,\psi_\ast+\phi,t)    {\hbox{\rm sgn}} (\phi).
\end{gather*}
It can easily be checked that there exist $\delta_1>0$, $t_1\geq t_0$ and $M_1>0$ such that $\omega(\psi_\ast+\phi)    {\hbox{\rm sgn}} (\phi) \leq -|\vartheta_m||\phi|/2$ and $|\tilde\Omega_{m}(0,\psi_\ast+\phi,t)    {\hbox{\rm sgn}} (\phi)|\leq M_1 t^{-(m+1)/2q}$ for all $|\phi|\leq \delta_1$ and $t\geq t_1$. Hence, $d\ell/dt\leq t^{-m/2q} (-|\vartheta_m|\ell/2 + M_1 t^{-1/2q})$ as $t\geq t_1$. Integrating the last inequality in case $m=2q$ yields
\begin{gather*}
    0\leq \ell (\phi(t))\leq \ell (\phi(t_1)) \Big(\frac{t}{t_1}\Big)^{-\frac{|\vartheta_m|}{2}}+M_1 t^{-\frac{|\vartheta_m|}{2}}\int\limits_{t_1}^t \tau^{\frac{q|\vartheta_m|-1}{2q}-1}\, d\tau
\end{gather*}
with $|\phi(t_1)|\leq \delta_1$. We see that $ \phi(t)=\mathcal O(t^{-1/2q})$ as $t\to\infty$ if $q|\vartheta_m|>1$, $ \phi(t) =\mathcal O(t^{-|\vartheta_m|/2})$ if $q|\vartheta_m|<1$, and $ \phi(t) =\mathcal O(t^{-|\vartheta_m|/2}\log t)$ if $q|\vartheta_m|=1$. Similar estimates hold in case  $m< 2q$. Therefore, there exists a solution $\hat\phi(t)$ of equation \eqref{phieq} such that $\hat\phi(t)=o(1)$ as $t\to \infty$.
\end{proof}

\begin{Lem}
Let assumption \eqref{as02} holds. Then $|\psi(t)|\to\infty$ as $t\to\infty$ for solutions of \eqref{simsys} with $v(t)\in [0,\Delta_\ast]$ as $t\geq t_0$.
\end{Lem}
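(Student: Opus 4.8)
The plan is to show that along any such solution the phase $\psi(t)$ is eventually strictly monotone with increments that do not sum to a finite quantity. The starting observation is that assumption \eqref{as02}, together with the continuity and $2\pi$-periodicity of $\Omega_m(v,\psi)$ in $\psi$, forces $\Omega_m$ to be bounded away from zero with a fixed sign on the strip $[0,\Delta_\ast]\times\mathbb R$: the set $[0,\Delta_\ast]\times[0,2\pi]$ is compact, so $|\Omega_m|$ attains a positive minimum $c_0$ there, and since $[0,\Delta_\ast]\times\mathbb R$ is connected and $\Omega_m$ is continuous and nonvanishing, $\Omega_m$ keeps a constant sign on it; without loss of generality $\Omega_m(v,\psi)\geq c_0>0$ for all $v\in[0,\Delta_\ast]$, $\psi\in\mathbb R$.

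Next I would absorb the remainder. Recall from section~\ref{sec2} that $\tilde\Omega_{m}(v,\psi,t)=\mathcal O(t^{-(m+1)/2q})$ as $t\to\infty$ uniformly for $\psi\in\mathbb R$ and $0\leq v\leq{\hbox{\rm const}}$. Since $t^{-(m+1)/2q}=t^{-1/2q}\,t^{-m/2q}$ and $t^{-1/2q}\to 0$, there is $t_2\geq t_0$ such that $|\tilde\Omega_{m}(v,\psi,t)|\leq \tfrac{c_0}{2}\,t^{-m/2q}$ for all $t\geq t_2$, $\psi\in\mathbb R$, $v\in[0,\Delta_\ast]$. Evaluating the second equation of \eqref{simsys} along a solution with $v(t)\in[0,\Delta_\ast]$ for $t\geq t_0$ then gives
\begin{gather*}
\frac{d\psi}{dt}=t^{-\frac{m}{2q}}\Omega_m(v(t),\psi(t))+\tilde\Omega_{m}(v(t),\psi(t),t)\geq \frac{c_0}{2}\,t^{-\frac{m}{2q}},\qquad t\geq t_2.
\end{gather*}

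Finally I would integrate this differential inequality from $t_2$ to $t$:
\begin{gather*}
\psi(t)\geq \psi(t_2)+\frac{c_0}{2}\int_{t_2}^{t}\tau^{-\frac{m}{2q}}\,d\tau.
\end{gather*}
Since $2\leq m\leq 2q$, the exponent satisfies $-1\leq -m/2q<0$, with equality to $-1$ only when $m=2q$, so the integral diverges (as $t^{1-m/2q}$ when $m<2q$, and as $\log t$ when $m=2q$); hence $\psi(t)\to+\infty$, and in particular $|\psi(t)|\to\infty$, as $t\to\infty$. In the complementary case $\Omega_m\leq -c_0<0$ the same argument yields $\psi(t)\to-\infty$. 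There is no genuine obstacle here; the only points deserving a line of justification are the constancy of the sign of $\Omega_m$ on the connected strip (which makes the phase eventually monotone) and the fact that the restriction $m\leq 2q$ puts the exponent $-m/2q$ exactly at or above the borderline $-1$, so that the decaying factor $t^{-m/2q}$ nonetheless has a divergent integral and drives the phase to infinity.
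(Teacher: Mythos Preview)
Your proof is correct and follows essentially the same approach as the paper: bound $|\Omega_m|$ from below by a positive constant on $[0,\Delta_\ast]\times\mathbb R$ (using compactness and periodicity), absorb the decaying remainder $\tilde\Omega_m$ into the leading term for large $t$, and integrate the resulting differential inequality, noting that $m\leq 2q$ makes $\int^\infty \tau^{-m/2q}\,d\tau$ divergent. You have simply supplied more detail (the connectedness argument for constant sign, the explicit handling of the borderline exponent $m=2q$) than the paper's terse version.
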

\begin{proof}
Since $\Omega_m(v,\psi)\neq 0$, it follows that $\Omega_m^-\leq | \Omega_m(v,\psi)|\leq \Omega_m^+$ for all $v\in [0,\Delta_\ast]$ and $\psi\in\mathbb R$ with $\Omega_m^\pm={\hbox{\rm const}}>0$. Then, there exists $t_1\geq t_0$ such that $\dot\psi \geq t^{-m/2q}\Omega_m^-/2$ as $t\geq t_1$ for all $v\in [0,\Delta_\ast]$, $\psi\in\mathbb R$ if $\Omega_m(v,\psi)>0$, or $\dot\psi \leq -t^{-m/2q}\Omega_m^-/2$ if $\Omega_m(v,\psi)<0$. Integrating the last inequalities, we obtain $|\psi(t)|\to \infty$ as $t\to \infty$.
\end{proof}

Note that the case of \eqref{as01} corresponds to a phase locking, while the case \eqref{as02} is associated with a phase drifting (see, for example,~\cite{RA46,CR88,AEC90,LJ13}). In both cases, the stability of the solution $v(t)\equiv 0$ and the equilibrium $(0,0)$ of system \eqref{FulSys} depends on the structure of the first equation in \eqref{simsys}. In the next sections, stability conditions are discussed separately for each asymptotic regime.

Before formulating the main results, let us introduce two more assumptions on the structure of simplified system \eqref{simsys}:
\begin{gather}
    \label{as1}
  \begin{split}
        &\Lambda_j(v,\psi)\equiv 0, \quad   j<n, \quad \Omega_i(v,\psi)\equiv 0, \quad i<m,\\
    &\Lambda_n(v,\psi)\equiv v\Big(\lambda_n(\psi)+\widetilde \lambda_{n}(v,\psi)+\delta_{n,2q}\frac{l}{q}\Big),
    \end{split}
\end{gather}
and
\begin{gather}
\label{as2}
        \begin{split}
&\Lambda_j(v,\psi)\equiv 0, \quad   j<n, \quad \Omega_i(v,\psi)\equiv 0, \quad i<m,\\
  &            \Lambda_z(v,\psi)\equiv v^{\frac{\sigma+1}{2}} \Big(\lambda_{z,\sigma}(\psi) + \widetilde \lambda_{z,\sigma}(v,\psi)\Big)+v\delta_{z,2q}\frac{l}{q},  \quad n\leq z< n+d,\\
& \Lambda_{n+d}(v,\psi)\equiv  v \Big(\lambda_{n+d}(\psi)+ \widetilde \lambda_{n+d}(v,\psi) +\delta_{n+d,2q}\frac{l}{q}\Big),
        \end{split}
\end{gather}
where
$\lambda_z(\psi)$, $\widetilde \lambda_z(v,\psi)$,
$\lambda_{z,\sigma}(\psi)$, $\widetilde\lambda_{z,\sigma}(v,\psi)$,
$\lambda_{n+d}(\psi)$, $\widetilde\lambda_{n+d}(v,\psi)$ are $2\pi$-periodic functions with respect to $\psi$, $d$ and $\sigma$ are integers such that $\sigma\geq 2$, $d\geq 1$,
\begin{gather*}
  \widetilde \lambda_{n}(v,\psi)=\mathcal O(v^{\frac 12}),  \quad \widetilde \lambda_{z,\sigma}(v,\psi) = \mathcal O(v^{\frac12}), \quad \widetilde \lambda_{n}(v,\psi)=\mathcal O(v^{\frac12})
\end{gather*}
as $v\to 0$ uniformly for all $\psi\in\mathbb R$.

\section{Phase locking}
\label{sec4}

In this section we discuss the stability of the equilibrium $(0,0)$ of system \eqref{FulSys} when assumption \eqref{as01} holds. In this case, the stability of the equilibrium relates to the properties of the particular solutions $(0,\hat\psi(t))$ to system \eqref{simsys}.

\begin{Th}\label{Th1}
Let system \eqref{FulSys} satisfy \eqref{H0}, \eqref{Has}, \eqref{Fas}, \eqref{pcond}, and  $2\leq n,m\leq 2q$ be integers such that assumptions \eqref{as01}, \eqref{as1} hold.
\begin{itemize}
  \item If $n,m<2q$, and $\lambda_n(\psi_\ast)<0$, then  the equilibrium $(0,0)$ is exponentially stable.
  \item If either $n<m=2q$, $\lambda_n(\psi_\ast)<0$ or $m\leq n=2q$, $\displaystyle \lambda_n(\psi_\ast)+ {l}/{q}<0$, then the equilibrium $(0,0)$  is polynomially stable.
  \item If $\lambda_n(\psi_\ast)+\widetilde\lambda_{n}(v,\psi_\ast)>0$ for all $v\in [0,\Delta_0]$,  then  the equilibrium $(0,0)$  is unstable.
\end{itemize}
\end{Th}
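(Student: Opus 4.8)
The plan is to reduce everything, via the change of variables of Section~\ref{sec2}, to system \eqref{simsys}, and to analyse the trivial solution $v\equiv0$ near the locked phase $\hat\psi(t)$ supplied by Lemma~\ref{Lem01}. First I would note that $v\equiv0$ is an invariant set of \eqref{simsys}, since $\Lambda_k(v,\psi)=\mathcal O(v)$ and $\tilde\Lambda_N(0,\psi,t)\equiv0$; hence the issue is whether a trajectory with $v(t_0)$ small remains small or is expelled. Substituting $\psi=\hat\psi(t)+\phi$ into \eqref{simsys}, using that $\hat\psi$ solves $d\psi/dt=t^{-m/2q}\Omega_m(0,\psi)+\tilde\Omega_m(0,\psi,t)$, together with \eqref{as1}, \eqref{as01}, and the expansion $\Omega_m(v,\psi)=\omega_{m0}(\psi)+\omega_{m1}(\psi)\sqrt v+\mathcal O(v)$, one obtains a local normal form $\dot v=t^{-n/2q}\big(\lambda_n(\psi_\ast)+\delta_{n,2q}l/q\big)v+\mathcal O\big(t^{-n/2q}(|\phi|v+v^{3/2})\big)+o(t^{-n/2q})v$ and $\dot\phi=t^{-m/2q}\big(\omega_{m0}'(\hat\psi(t))\phi+\mathcal O(\phi^2+\sqrt v)\big)+\mathcal O\big(t^{-(m+1)/2q}(|\phi|+\sqrt v)\big)$, valid for $t$ large and $(v,\phi)$ in a fixed neighbourhood of the origin, where $\omega_{m0}'(\hat\psi(t))\le\vartheta_m/2<0$ for $t\ge t_1$ because $\hat\psi(t)\to\psi_\ast$.

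For the two stability statements I would set $\gamma:=\max\{0,(n-m)/2q\}$ and take $U(v,\phi,t)=t^{\gamma}v+\chi\phi^2$ with a small constant $\chi>0$. Differentiating along trajectories, the term $\gamma t^{\gamma-1}v$ is absorbed into the dissipation since $n\le2q$, so $\gamma t^{\gamma-1}v+t^{\gamma}\dot v\le-\nu t^{\gamma-n/2q}v+\text{h.o.t.}$ with $\nu=-\tfrac12(\lambda_n(\psi_\ast)+\delta_{n,2q}l/q)>0$ (this is where each of the hypotheses $\lambda_n(\psi_\ast)<0$, resp. $\lambda_n(\psi_\ast)+l/q<0$ for $n=2q$, enters); applying Young's inequality to the cross term $|\phi|\sqrt v$ and shrinking the neighbourhood to absorb the $\mathcal O(\phi^2)$, $\mathcal O(|\phi|v)$ contributions gives $\chi\,\tfrac{d}{dt}(\phi^2)\le-2\chi\mu\,t^{-m/2q}\phi^2+\chi C\,t^{-m/2q}v+\text{h.o.t.}$ with $\mu>0$ and a fixed $C$. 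By the choice of $\gamma$ one has $\gamma-n/2q\le-m/2q$, so the bad term $\chi C t^{-m/2q}v$ is dominated by $\nu t^{\gamma-n/2q}v$ once $\chi$ is small; collecting the estimates yields $\dot U\le-c\,t^{-\max\{n,m\}/2q}U$ on a sublevel set $\{U\le\rho\}$ for $t\ge t_1$. Integrating: $\max\{n,m\}<2q$ gives $U(t)\le U(t_1)\exp\!\big(-c'(t^{1-\max\{n,m\}/2q}-t_1^{1-\max\{n,m\}/2q})\big)$, the exponential case; $\max\{n,m\}=2q$ gives $U(t)\le U(t_1)(t/t_1)^{-c}$, the polynomial case. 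Since $U(t)\le U(t_0)$ bounds $v(t)$ and $\phi(t)$ by the initial data, and since $E=t^{-l/q}\mathcal E$ with $\mathcal E$ comparable to $v$ by \eqref{estVE}, one gets $E(t)\le\mathrm{const}\cdot(t_0/t)^{l/q}E(t_0)\le\mathrm{const}\cdot E(t_0)$, hence Lyapunov stability of $(0,0)$; and $r(t)=\sqrt{2E(t)}\,(1+o(1))$ inherits the decay, with an extra factor $t^{-l/q}$ when $l\ge1$, which yields the claimed exponential (resp. polynomial) stability.

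For the instability statement, $\lambda_n(\psi_\ast)+\widetilde\lambda_n(v,\psi_\ast)>0$ on $[0,\Delta_0]$ plus continuity give $\Lambda_n(v,\psi)\ge\tfrac{c_0}{2}v$ with $c_0>0$ for all $v\in[0,\Delta_0]$ and $\psi$ near $\psi_\ast$ (the term $\delta_{n,2q}l/q\ge0$ only helps). Choosing initial data with $\psi(t_0)$ near $\psi_\ast$ and $v(t_0)=v_0\in(0,\Delta_0)$ arbitrarily small, the $\phi$-estimate from the previous paragraph (valid as long as $v\le\Delta_0$) keeps $\psi$ in that neighbourhood, so $\dot v\ge\tfrac{c_0}{4}t^{-n/2q}v$ there and $v(t)\ge v_0\exp\!\big(\tfrac{c_0}{4}\int_{t_0}^{t}\tau^{-n/2q}\,d\tau\big)$; since $\int^{t}\tau^{-n/2q}\,d\tau\to\infty$, $v$ leaves $[0,\Delta_0/2]$ in finite time, so $v\equiv0$ is unstable. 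When $l=0$ this is precisely instability of $(0,0)$ in system \eqref{FulSys}; when $l\ge1$ one combines it with the estimate $\dot E\ge E\big(\tfrac{c_0}{4}t^{-n/2q}(1+o(1))-\tfrac{l}{q}t^{-1}\big)$ for the amplitude, i.e. with the additional analysis indicated at the end of Section~\ref{sec2}.

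The step I expect to be the main obstacle is the coupling in the stability argument: the half-power term $\omega_{m1}(\psi)\sqrt v$ in $\Omega_m$ feeds a cross term of size $t^{-m/2q}|\phi|\sqrt v$ into $\tfrac{d}{dt}(\phi^2)$, and its $v$-part, after Young's inequality, must be controlled by the dissipation $-\nu t^{\gamma-n/2q}v$ coming solely from the $v$-equation; arranging this balance uniformly over all admissible pairs $2\le n,m\le2q$ is exactly what forces the time weight $t^{\gamma}$ with $\gamma=\max\{0,(n-m)/2q\}$ together with the smallness of $\chi$. A secondary delicate point, present only in the instability part when $l\ge1$, is that passing from growth of $v$ to growth of the original amplitude $E=t^{-l/q}\mathcal E$ is obstructed by the damping factor $t^{-l/q}$ in \eqref{epsdel}, which is the difficulty already acknowledged after the reduction lemma.
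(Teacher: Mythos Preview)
Your overall strategy matches the paper's: reduce to \eqref{simsys}, shift to $\psi=\hat\psi(t)+\phi$, and build a time-weighted Lyapunov function. Your use of Young's inequality plus a small coupling constant $\chi$ to absorb the cross term $\omega_{m1}(\psi_\ast)\sqrt v$ is a clean alternative to the paper's device of inserting explicit mixed terms such as $(\phi-\beta_1 u)^2$ or $(\phi+\beta_2 u)^{2p}$ into the Lyapunov function, and it would let you treat the three sub-cases $n=m$, $m<n$, $n<m$ uniformly rather than separately as the paper does.

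There is, however, a genuine gap in the borderline case $m<n=2q$. With $\gamma=(n-m)/(2q)=(2q-m)/(2q)$ you get $\gamma-1=-m/(2q)=\gamma-n/(2q)$, so the term $\gamma t^{\gamma-1}v$ coming from differentiating the weight has \emph{exactly the same} order as the dissipation $\lambda_n^\ast t^{\gamma-n/(2q)}v$ (here $\lambda_n^\ast=\lambda_n(\psi_\ast)+l/q$). The combined coefficient is $\gamma+\lambda_n^\ast$, and the hypothesis $\lambda_n^\ast<0$ does not force $\gamma+\lambda_n^\ast<0$; for $\lambda_n^\ast\in(-\gamma,0)$ your inequality $\dot U\le -\nu t^{\gamma-n/(2q)}v+\cdots$ fails. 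The paper's remedy is to replace $v$ by $v^{p}$ (equivalently $u^{2p}$) for a large integer $p$: then the weight contributes only $(n-m)/(2pq)$, and one chooses $p$ so that $\lambda_n^\ast+(n-m)/(2pq)<0$. With this single modification (use $U=t^{\gamma}v^{p}+\chi\phi^{2p}$ and pick $p$ large in the borderline case) your unified argument goes through. A minor second point: in the instability part, ``the $\phi$-estimate from the previous paragraph'' is not literally available since that estimate used $\lambda_n^\ast<0$; what you actually need is just that the $\phi$-equation remains contractive ($\vartheta_m<0$) with forcing bounded by $C\sqrt v\le C\sqrt\varepsilon$ while $v\le\varepsilon$, so choosing $\varepsilon$ small keeps $|\phi|\le\phi_1$ during the escape---state this directly.
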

\begin{proof}
We choose $N=n$ and \eqref{simsys} takes the form:
\begin{gather}
\label{simsys1}
    \frac{dv}{dt}=   t^{-\frac{n}{2q}} \Lambda_n(v,\psi) + \tilde\Lambda_{n}(v,\psi,t),   \quad
    \frac{d\psi}{dt}=  t^{-\frac{m}{2q}} \Omega_m(v,\psi) +    \tilde\Omega_{m}(v,\psi,t).
\end{gather}
From Lemma~\ref{Lem01} it follows that system \eqref{simsys1} has the particular solution $v(t)\equiv 0$, $\psi(t)\equiv \hat\psi(t)$ such that $ \hat\psi(t)=\psi_\ast+o(1)$ as $t\to\infty$. Substituting $v(t)=[u(t)]^2$, $\psi(t)=\hat\psi(t)+\phi(t)$ into \eqref{simsys1} yields the following  system with an equilibrium at $(0,0)$:
\begin{gather}
    \begin{split}\label{vusys}
          2u  \frac{du}{dt}&=  t^{-\frac{n}{2q}} \Lambda_n(u^2,\hat\psi(t)+\phi) + \tilde\Lambda_{n}(u^2,\hat\psi(t)+\phi,t),   \\
            \frac{d\phi}{dt}&=  t^{-\frac{m}{2q}}\Big( \Omega_m(u^2,\hat\psi(t)+\phi)-\Omega_m(0,\hat\psi(t))\Big) +    \tilde\Omega_{m}(u^2,\hat\psi(t)+\phi,t)-\tilde\Omega_{m}(0,\hat\psi(t),t).
    \end{split}
\end{gather}
It can easily be checked that the eigenvalues $a_1(t)$ and $a_2(t)$ of the linearized system
    \begin{gather*}
    \frac{d}{dt}
    \begin{pmatrix} u \\ \phi \end{pmatrix}=
    {\bf A}(t)
    \begin{pmatrix} u \\ \phi \end{pmatrix},
\quad
    {\bf A}(t) =
    (1+o(1))
    \begin{pmatrix}
        \displaystyle\frac 12  t^{-\frac{n}{2q}} \Big(\lambda_n(\psi_\ast)+\delta_{n,2 q}\frac {l}{q} \Big)
            &
        0 \\
        \displaystyle t^{-\frac {m}{2q}}\omega_{m1}(\psi_\ast)
            &
        \displaystyle t^{-\frac {m}{2q}}\vartheta_m
    \end{pmatrix}, \quad t\to\infty,
\end{gather*}
have the following asymptotics: $a_1(t)=2^{-1} t^{-n/2q} (\lambda_n(\psi_\ast)+\delta_{n,2q}l/q  +o(1))$, $a_2(t)= t^{-m/2q}(\vartheta_m+o(1))$ as $t\to \infty$. Since $a_{1,2}(t)\to 0$ as $t\to \infty$, the linear stability analysis fails (see, for example,~\cite{OS20arxiv}). We investigate the stability by constructing suitable Lyapunov and Chetaev functions.

First, consider the case $\lambda_n(\psi_\ast)+\widetilde\lambda_{n}(v,\psi_\ast)>0$ for all $v\in [0,\Delta_0]$. It can easily be proved that the fixed point $(0,0)$ of system \eqref{vusys} is unstable by taking $J(u)=u^2$ as a Chetaev function candidate. The derivative of $J$ along the trajectories of system \eqref{vusys} is given by
\begin{gather*}
    \frac{dJ}{dt}\Big|_{\eqref{vusys}}=
    t^{-\frac{n}{2q}}u^2\Big(\lambda_n(\psi_\ast)+\widetilde\lambda_n(u^2,\psi_\ast)+\delta_{n, 2q}\frac {l}{q}+\mathcal O(\phi)+\mathcal O(t^{-\frac{1}{2q}})+\mathcal O(t^{-\kappa})\Big)
\end{gather*}
as $t\to \infty$ and  $\phi\to 0$ for all $u \in [0, \Delta_0^{1/2}]$.
Therefore, for all $\epsilon\in (0,1)$ there exist $\phi_1>0$ and $t_1\geq t_0$ such that
\begin{gather*}
    \frac{dJ}{dt}\Big|_{\eqref{vusys}}\geq
    t^{-\frac{n}{2q}} J\Big((1-\epsilon)\lambda_n^\ast+\delta_{n, 2q}\frac {l}{q}\Big)>0
\end{gather*}
for all  $u \in (0,  \Delta_0^{1/2} ]$,  $|\phi|\leq \phi_1$ and $t\geq t_1$, where $\lambda_n^\ast:=\min_{v\in[0,\Delta_0]}(\lambda_n(\psi_\ast)+\widetilde\lambda_n(v,\psi_\ast))>0$. Integrating the last inequality over $[t_1,t]$ yields
\begin{gather}
    \label{vest1}
        \begin{split}
          u^2(t) &
          \geq  u^2(t_1) \exp\left((1-\epsilon) \lambda_n^\ast\Big(t^{1-\frac{n}{2q}}-t_1^{1-\frac{n}{2q}}\Big)\frac{2q}{2q-n}  \right),
        \quad n <2q,  \\
        u^2(t)&
        \geq  u^2(t_1)  \Big(\frac{t}{t_1}\Big)^{(1-\epsilon)\lambda_n^\ast+\frac {l}{q}},
        \quad n = 2q.
\end{split}
\end{gather}
Hence, there exists $\varepsilon\in (0,  \Delta_0^{1/2}]$ such that for all $\delta\in (0,\varepsilon)$ the solution $(u(t),\phi(t))$ with initial data $|u(t_1)|\leq \delta_1$, $|\phi(t_1)|\leq \phi_1$ leaves $\varepsilon$-neighbourhood of $(0,0)$: $|u(t_\ast)|\geq \varepsilon$ at some $t_\ast>t_1$. From \eqref{vest1}, \eqref{estVE} and \eqref{epsdel} it follows that the trivial solution is also unstable in the original variables $(x,y)$. Indeed, if $n=2q$, we have
\begin{gather*}
    E(t)\geq
        \left(\frac{1-\epsilon}{1+\epsilon}\right)
        E(t_1) \Big(\frac{t}{t_1}\Big)^{(1-\epsilon)\lambda_n^\ast}, \quad t\geq t_1.
\end{gather*}

Note that the conditions that guarantee the stability of the fixed point $(0,0)$ depend on the values of $n$ and $m$.

{\bf 1}. Consider first the case $n=m$. Let $\lambda^\ast_{n}:=\lambda_n(\psi_\ast)+\delta_{n,2q}l/q<0$. If $\omega_{m1}(\psi_\ast)=0$, the stability can be proved by using a Lyapunov function of the form $L_0(u,\phi)=u^2+\phi^2\lambda^\ast_{n}(2\vartheta_m)^{-1}$.
The derivative of $L_0(u,\phi)$ with respect to $t$ along the trajectories of \eqref{vusys} is given by
\begin{gather*}
        \frac{dL_0}{dt}\Big|_{\eqref{vusys}}=
         \lambda_n^\ast t^{-\frac {n}{2q}}(u^2+\phi^2)
        \Big(1+\mathcal O(w) +\mathcal O(t^{-\frac{1}{2q}})+\mathcal O( t^{-\kappa})\Big)
\end{gather*}
as $w=\sqrt{u^2+\phi^2}\to 0$ and $t\to\infty$. Here, the asymptotic estimates $\mathcal O(t^{-1/2q})$ and $\mathcal O(w)$ are uniform with respect to $(u,\phi,t)$ in the domain $\mathcal I(w_\ast,t_\ast):=\{(u,\phi,t)\in\mathbb R^3: w\leq w_\ast, t\geq t_\ast\}$ with some positive constants $w_\ast$ and $t_\ast$. It can easily be checked that there exists $w_1>0$ and $t_1\geq t_0$ such that
\begin{gather*}
       A_0 w^2 \leq L_0(u,\phi)\leq B_0w^2, \quad
        \frac{dL_0}{dt}\Big|_{\eqref{vusys}}\leq -\frac{|\lambda_n^\ast|}{2} t^{-\frac{n}{2q}} w^2 \leq -  \gamma_0 t^{-\frac {n}{2q}}  L_0\leq 0
\end{gather*}
for all $(u,\phi,t)\in\mathcal I(w_1,t_1)$, where $A_0=\min\{1,\lambda^\ast_{n}(2\vartheta_m)^{-1} \}$, $B_0=\max\{1,\lambda^\ast_{n}(2\vartheta_m)^{-1} \}$, and $\gamma_0=|\lambda^\ast_{n}| (2B_0)^{-1}>0$. Hence, the fixed point $(0,0)$ of system \eqref{vusys} is stable. Moreover, by integrating the last inequality over $[t_1,t]$, we obtain
\begin{gather*}
        \begin{split}
            u^2(t)+\phi^2(t)\leq C_0 \exp\left(-    \frac{ 2q \gamma_0  }{2q-n} t^{1-\frac {n}{2q}}\right),  \quad & n=m< 2q, \\
  u^2(t)+\phi^2(t)\leq C_0 t^{-\gamma_0},  \quad & n=m= 2q,
\end{split}
\end{gather*}
where $C_0$ is a positive constant dependent on $u(t_1)$, $\phi(t_1)$ and $t_1$.
Therefore, the trajectories starting in the vicinity of the equilibrium
 tend to it as $t\to\infty$. In particular, if $n< 2q$, the equilibrium of system \eqref{vusys} and the particular solution $v(t)\equiv 0$, $\psi(t)\equiv \hat\psi(t)$ of system \eqref{simsys} are exponentially stable. If $n=2q$, the stability is polynomial. Taking into account \eqref{exch1}, \eqref{epsdel}, and \eqref{estVE}, we obtain the corresponding results on the stability of the equilibrium $(0,0)$ in system \eqref{FulSys}.

If $\omega_{m1}(\psi_\ast)\neq 0$, a Lyapunov function is constructed in the form $L_1(u,\phi)=u^2+\alpha_1(\phi-\beta_1 u)^2$, where  $\alpha_1=\lambda^\ast_{n} (4 \vartheta_m \beta_1 ^2)^{-1}>0$,  $\beta_1=2\omega_{m1}(\psi_\ast)(2\vartheta_m+\lambda^\ast_{n})^{-1}\neq 0$. Its derivative is given by
\begin{gather*}
\frac{dL_1}{dt}\Big|_{\eqref{vusys}}=\frac{\lambda^\ast_{n}}{2}  t^{-\frac {n}{2q}} (u^2+\beta^{-2}_1\phi^2)\Big(1+\mathcal O(w)+\mathcal O(t^{-\frac{1}{2q}})+\mathcal O( t^{-\kappa})\Big)
\end{gather*}
as $w\to 0$ and $t\to \infty$.
There exist $w_1>0$ and $t_1\geq t_0$ such that
\begin{gather*}
    A_1\big(u^2+\beta^{-2}_1\phi^2\big) \leq L_1(u,\phi)\leq B_1 \big(u^2+\beta_1^{-2}\phi^2\big), \quad
    \frac{dL_1}{dt}\Big|_{\eqref{vusys}}\leq
        -\frac{|\lambda^\ast_{n}|}{4} t^{-\frac{n}{2q}} \big(u^2+\beta_1^{-2}\phi^2\big)  \leq -\gamma_1 t^{-\frac {n}{2q}} L_1
\end{gather*}
for all $(u,\phi,t)\in\mathcal I(w_1,t_1)$, where $A_1=\lambda_n^\ast (4c_1)^{-1}$, $B_1=c_1(2\vartheta_m)^{-1}$, $\gamma_1=\lambda^\ast_{n}\vartheta_m(2c_1)^{-1}>0$, $c_1=\lambda_n^\ast+4\vartheta_m$. Hence, as in the previous case, the fixed point of system \eqref{FulSys} is asymptotically stable.

{\bf 2.} Consider the case $m<n\leq 2q$. Let us prove the stability of the equilibrium when $\lambda_n^\ast<0$.  Consider $L_2(u,\phi,t)= t^{(n-m)/(2q)} u^{2p}+ \alpha_2 ( \phi+\beta_2 u)^{2p} $ as a Lyapunov function candidate. Here $p\geq 1$ is an integer such that  $\lambda_{np}^\ast:=\lambda_n^\ast+(n-m)/(2pq)<0$, $\alpha_2:=\lambda_{np}^\ast (2\vartheta_m)^{-1}>0$, $\beta_2:=\omega_{m1}(\psi_\ast)\vartheta_m^{-1}$.  The derivative of $L_2(u,\phi,t)$ along the trajectories of system \eqref{vusys} is given by
    \begin{gather*}
        \frac{dL_2}{dt}\Big|_{\eqref{vusys}}=  p
        t^{-\frac {m}{2q}} \Big(\lambda_n^\ast u^{2p}+\lambda_{np}^{\ast} (\phi+\beta_2 u)^{2p}+\frac{n-m}{2pq} u^{2p}t^{\frac{n}{2q}-1}+ \mathcal O(w^{2p+1})\Big)(1+
\mathcal O( t^{-\frac{1}{2q}})+\mathcal O( t^{-\kappa}))
    \end{gather*}
as $w\to 0$ and $t\to\infty$. It can easily be checked that $L_2$ and $d L_2/dt$ satisfy the following estimates:
\begin{gather}
\label{L2}
\begin{split}
A_2 w^{2p}\leq L_2(u,\phi,t)\leq  t^{\frac{n-m}{2q}}B_2 \Big(u^{2p}+  ( \phi+\beta_2 u)^{2p}\Big), \\
\frac{dL_2}{dt}\Big|_{\eqref{vusys}}\leq - \frac {p|\lambda_{np}^\ast|}{2} t^{-\frac {m}{2q}}  \Big(u^{2p}+ ( \phi+\beta_2 u)^{2p}\Big) \leq  -  p \gamma_2 t^{-\frac {n}{2q}} L_2\leq 0
\end{split}
\end{gather}
for all $(u,\phi,t)\in\mathcal I(w_2,t_2)$ with some constants $w_2>0$ and $t_2\geq t_0$, where $\gamma_2=|\lambda^\ast_{np}|(2B_2)^{-1}>0$, $A_2=4^{1-p}\min\{1,\alpha_2\}/(1+\alpha_2 (\beta_2^2/2)^p)$, $B_2=\max\{1,\alpha_2\}$. Hence, the equilibrium $(0,0)$ is stable. In addition, by integrating \eqref{L2}, we obtain the following estimates
\begin{gather*}
        \begin{split}
            u^2(t)+\phi^2(t)\leq C_2 \exp\left(-  \frac{2q  \gamma_2 }{2q-n} t^{1-\frac {n}{2q}} \right),  \quad & n< 2q, \\
  u^2(t)+\phi^2(t)\leq C_2 t^{-\gamma_2},  \quad & n= 2q,
\end{split}
\end{gather*}
with initial data $u^2(t_2)+\phi^2(t_2)< w_2^2$ and $C_2={\hbox {\rm const}}>0$.
Therefore, if $m<n<2q$ and $\lambda_n(\psi_\ast)<0$, the equilibrium of system \eqref{FulSys} is exponentially stable; if $m<n=2q$ and $\lambda_n(\psi_\ast)+l/(2q)<0$, the equilibrium is polynomially stable.

{\bf 3}. Finally, consider the case $n<m\leq 2q$. Let $\lambda_n(\psi_\ast)<0$ and $p\geq 1$ be an integer such that $\vartheta_{mp}^\ast:=\vartheta_m+(m-n)/(4pq)<0$.
We use
$L_3(u,\phi,t)=t^{(m-n)/(2q)}\phi^{2p}+\alpha_3 u^{2p}+\beta_3 u\phi^{2p-1}+c_3 \phi^{2p}$
with
$\alpha_3:= 2\vartheta^\ast_{mp} (\lambda_n(\psi_\ast))^{-1}>0$,
$\beta_3:=4p\omega_{m1}(\psi_\ast)(\lambda_n(\psi_\ast))^{-1}$,
$c_3={\hbox{\rm const}}>0$
as a Lyapunov function candidate.
The derivative of $L_3$ with respect to $t$ along the trajectories of system \eqref{vusys} is given by
\begin{gather*}
    \frac{dL_3}{dt}\Big|_{\eqref{vusys}}=2p t^{-\frac {n}{2q}} \Big(\vartheta_{mp}^\ast  u^{2p}+\vartheta_m \phi^{2p}+\frac{m-n}{4pq} t^{\frac{m}{2q}-1}\phi^{2p}+\mathcal O(w^{2p+1})\Big)(1+\mathcal O( t^{-\frac{1}{2q}})+\mathcal O( t^{-\kappa}))
\end{gather*}
as $w\to 0$ and $t\to\infty$.
It follows that there exist $w_3>0$, $t_3\geq t_0$ and $c_3>0$ such that
\begin{gather*}
A_3 w^{2p}\leq L_3(u,\phi,t)\leq   B_3t^{\frac{m-n}{2q}} (u^{2p}+\phi^{2p}), \quad
\frac{dL_3}{dt}\Big|_{\eqref{vusys}}\leq - p |\vartheta_{mp}^\ast| t^{-\frac{n}{2q}}  (u^{2p}+\phi^{2p}) \leq  -p \gamma_3 t^{-\frac {m}{2q}} L_3
\end{gather*}
for all $(u,\phi,t)\in\mathcal I(w_3,t_3)$, where
$A_3=2^{-p}\min\{1,\alpha_3\}$,
$B_3=2\max\{1, \alpha_3\}$,
$\gamma_3= |\vartheta_{mp}^\ast| B_3^{-1}>0$.
Hence, the solution $v(t)\equiv0$, $u(t)\equiv 0$ is stable. Furthermore, integrating the last inequality over $[t_3,t]$ yields
\begin{gather*}
        \begin{split}
         u^{2}(t)+\phi^2(t)\leq C_3
\exp\Big(- \frac{2 q \gamma_3}{2q-m} t^{1-\frac {m}{2q}} \Big),\quad
            & m< 2q, \\
        u^2(t)+\phi^2(t)\leq C_3 t^{-\gamma_3},  \quad
            & m=2q,
    \end{split}
\end{gather*}
where $C_3$ is a positive constant depending on the initial conditions: $u(t_3)$, $\phi(t_3)$ and $t_3$. Therefore,  if $n<m<2q$ and $\lambda_n(\psi_\ast)<0$, the equilibrium $(0,0)$ of system \eqref{FulSys} is exponentially stable; if $n<m=2q$ and $\lambda_n(\psi_\ast)<0$, the equilibrium is polynomially stable.
\end{proof}

\begin{Rem}
Let us note that stability of the equilibrium $(0,0)$ of system \eqref{FulSys} is not justified if $m\leq n=2q$, $0<\lambda_n(\psi_\ast)+l/q<l/q$, and $\omega(E)\not\equiv {\hbox{\rm const}}$ $(l\neq 0)$. It follows from the proof of Theorem~\ref{Th1} that the particular solution $v(t)\equiv 0$, $\psi(t)\equiv \hat\psi(t)$ of system \eqref{vusys} is unstable, and  $E(t)t^{l/q}\geq \Delta_0$ at some $t\geq t_\ast$ for arbitrarily small initial data.  However, such estimate cannot ensure the instability of the equilibrium in the original coordinates. In this case, we can say that the equilibrium $(0,0)$ of system \eqref{FulSys} is unstable with the weight $t^{l/2q}$.
\end{Rem}

Now, we consider the case when  $\Lambda_n(v,\psi)$ is nonlinear with respect to $v$. Define $\nu=d/(q (\sigma-1))>0$. Then we have the following:

\begin{Th}\label{Th2}
Let system \eqref{FulSys} satisfy \eqref{H0}, \eqref{Has}, \eqref{Fas}, \eqref{pcond}, and $2\leq n,m\leq 2q$, $\sigma\geq 2$, $d\geq 1$ be integers such that  assumptions \eqref{as01}, \eqref{as2} hold.
\begin{itemize}
\item If $n+d\leq 2q$,  $\lambda_{n+d}(\psi_\ast)+\widetilde\lambda_{n+d}(v,\psi_\ast)>0$ and $\lambda_{n,\sigma}(\psi_\ast)+\widetilde\lambda_{n,\sigma}(v,\psi_\ast)>0$ for all $v\in [0,\Delta_0]$,
        then the equilibrium $(0,0)$ is unstable.
\item If $m\leq n+d= 2q$ and
        either $\displaystyle\lambda_{n+d}(\psi_\ast)+\nu+l/q<0$
        or $\displaystyle \lambda_{n+d}(\psi_\ast)+\nu+l/q>0$, $\lambda_{n,\sigma}(\psi_\ast)<0$,
    then the equilibrium $(0,0)$ is polynomially stable.
  \item If $m\leq n+d< 2q$ and
  \begin{itemize}
    \item $\lambda_{n+d}(\psi_\ast)<0$, then the equilibrium $(0,0)$ is exponentially stable.
    \item $\displaystyle \lambda_{n+d}(\psi_\ast)>0$, $\lambda_{n,\sigma}(\psi_\ast)<0$, then the equilibrium $(0,0)$ is polynomially stable.
  \end{itemize}
 \item If $n+d<m=2q$ and either $\displaystyle \lambda_{n+d}(\psi_\ast)<0$ or
    $\displaystyle \lambda_{n+d}(\psi_\ast)>0$, $\lambda_{n,\sigma}(\psi_\ast)<0$, then the equilibrium $(0,0)$ is polynomially stable.
    \item If $n+d<m<2q$ and
  \begin{itemize}
    \item $\lambda_{n+d}(\psi_\ast)<0$, then the equilibrium $(0,0)$ is exponentially stable.
    \item $\displaystyle \lambda_{n+d}(\psi_\ast)>0$, $\lambda_{n,\sigma}(\psi_\ast)<0$, then the equilibrium $(0,0)$ is polynomially stable. \end{itemize}
 \end{itemize}
\end{Th}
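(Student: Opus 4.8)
The plan is to follow the architecture of the proof of Theorem~\ref{Th1}; the genuinely new feature is the competition, in the $v$-equation, between the nonlinear term $\Lambda_n(v,\psi)\sim\lambda_{n,\sigma}(\psi)\,v^{(\sigma+1)/2}$ carrying the faster factor $t^{-n/2q}$ and the linear term $\Lambda_{n+d}(v,\psi)\sim\lambda_{n+d}(\psi)\,v$ carrying the slower factor $t^{-(n+d)/2q}$. First I would take $N=n+d$ in \eqref{simsys}, invoke Lemma~\ref{Lem01} to produce the particular solution $(0,\hat\psi(t))$ with $\hat\psi(t)=\psi_\ast+o(1)$, and pass to the variables $(v,\phi)$ by $\psi=\hat\psi(t)+\phi$ as in \eqref{vusys}, obtaining a system with an equilibrium at the origin whose $\phi$-equation is, to leading order, $\dot\phi=t^{-m/2q}\big(\vartheta_m\phi+\omega_{m1}(\psi_\ast)\sqrt v+\dots\big)$. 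The organising identity is $\tfrac{n}{2q}+\tfrac{\sigma-1}{2}\nu=\tfrac{n+d}{2q}$, which says that the two competing terms become comparable precisely along the curve $v\sim t^{-\nu}$; this is why $\nu$ enters the statement.

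For the cases with $\lambda_{n+d}(\psi_\ast)>0$ and $\lambda_{n,\sigma}(\psi_\ast)<0$ I would substitute $v(t)=t^{-\nu}\rho(t)$. Using the above identity, the $v$-equation becomes
\[
\dot\rho=t^{-\frac{n+d}{2q}}\,\rho\Big(\lambda_{n,\sigma}(\psi_\ast)\rho^{\frac{\sigma-1}{2}}+\lambda_{n+d}(\psi_\ast)+\delta_{n+d,2q}\big(\tfrac{l}{q}+\nu\big)\Big)\big(1+o(1)\big)+(\text{lower order}),
\]
the subdominant $\nu t^{-1}\rho$ being absorbed into the bracket when $n+d=2q$ and negligible otherwise. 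The bracket vanishes at $\rho_\ast=\big((\lambda_{n+d}(\psi_\ast)+\delta_{n+d,2q}(l/q+\nu))/|\lambda_{n,\sigma}(\psi_\ast)|\big)^{2/(\sigma-1)}>0$ with linearisation $-\tfrac{\sigma-1}{2}(\lambda_{n+d}(\psi_\ast)+\delta_{n+d,2q}(l/q+\nu))<0$. A Lyapunov function of the type $(\rho-\rho_\ast)^2+\alpha(\phi-\beta\sqrt v+\dots)^2$, with the cross term tuned to the sub-regimes $m<n+d$, $m=n+d$, $m>n+d$ (and $m=2q$) exactly as in cases~1--3 of the proof of Theorem~\ref{Th1}, then gives $\rho\to\rho_\ast$, $\phi\to0$, hence $v(t)=\mathcal O(t^{-\nu})$; unscrambling \eqref{exch1}, \eqref{epsdel}, \eqref{VF} yields polynomial stability of $(0,0)$. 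When $\lambda_{n+d}(\psi_\ast)<0$ (and, if $n+d=2q$, also $\lambda_{n+d}(\psi_\ast)+\nu+l/q<0$), I would keep the original variables, build the Lyapunov functions of Theorem~\ref{Th1} with $n+d$ in the role of $n$, and treat the nonlinear term as a perturbation: in $\dot L$ it contributes a term of the order $t^{-n/2q}L\,v^{(\sigma-1)/2}$, so that $\dot L\le -ct^{-(n+d)/2q}L+Ct^{-n/2q}L\,v^{(\sigma-1)/2}$; for initial data with $v(t_1)$ below a suitable constant multiple of $t_1^{-\nu}$ the stabilising term dominates, $L$ decays (exponentially if $n+d<2q$; like $t^{\lambda_{n+d}(\psi_\ast)+l/q}$ if $n+d=2q$, an exponent that is $<-\nu$ exactly under the hypothesis $\lambda_{n+d}(\psi_\ast)+\nu+l/q<0$, which renders the remainder negligible), and the constraint $v\lesssim t^{-\nu}$ propagates. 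Finally, if $\lambda_{n+d}(\psi_\ast)+\widetilde\lambda_{n+d}(v,\psi_\ast)>0$ and $\lambda_{n,\sigma}(\psi_\ast)+\widetilde\lambda_{n,\sigma}(v,\psi_\ast)>0$ on $[0,\Delta_0]$, both terms push $v$ upward, so I would take $J=v$ as a Chetaev function, note $\dot J\ge c\,t^{-(n+d)/2q}v>0$ near the origin (using $l\ge0$), integrate, and conclude instability, transferred to $(x,y)$ as in the proof of Theorem~\ref{Th1}.

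The step I expect to be the main obstacle is the subcase $\lambda_{n+d}(\psi_\ast)<0$. Since the nonlinear term decays in $t$ strictly slower than the stabilising linear term, no bound uniform in $t$ makes it negligible, and one is forced to argue in the shrinking region $v\lesssim t^{-\nu}$: the delicate points are that for each fixed admissible $t_1$ the set $\{\,v(t_1)<\mathrm{const}\cdot t_1^{-\nu}\,\}$ is a genuine neighbourhood of the origin, and that the differential inequality indeed propagates $v(t)\lesssim t^{-\nu}$ for all $t\ge t_1$ — i.e.\ that $L$ really decays faster than $t^{-\nu}$, which for $n+d<2q$ follows from $t^{-(n+d)/2q}\gg t^{-1}$ and for $n+d=2q$ is precisely the sharp hypothesis $\lambda_{n+d}(\psi_\ast)+\nu+l/q<0$. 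A secondary, purely bookkeeping difficulty is that each bullet of the theorem splits further according to the position of $m$ relative to $n+d$ (and whether $m=2q$), each requiring the corresponding cross term in the Lyapunov function as in Theorem~\ref{Th1}, so the argument, although not conceptually new in that part, is lengthy.
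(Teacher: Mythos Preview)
Your overall architecture is right and matches the paper: take $N=n+d$, pass to $(v,\phi)$ along the particular solution $(0,\hat\psi(t))$, and exploit the identity $\frac{n}{2q}+\frac{\sigma-1}{2}\nu=\frac{n+d}{2q}$. The instability argument via $J=v$ is exactly what the paper does. The substitution $v=t^{-\nu}\rho$ for the case $\lambda_{n+d}(\psi_\ast)>0$, $\lambda_{n,\sigma}(\psi_\ast)<0$ is also the paper's move (the paper writes $v=t^{-\nu}\xi^2$, so that the resulting $\xi$-equation is polynomial in $\xi$, but this is cosmetic).

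The one place where you diverge from the paper is the subcase $\lambda_{n+d}(\psi_\ast)<0$, which you yourself flag as the main obstacle. You propose to keep the original variable $v$ and bootstrap in the shrinking region $v\lesssim t^{-\nu}$. The paper avoids this entirely by using the \emph{same} substitution $v=t^{-\nu}\xi^2$ in this case too. After the substitution the two competing terms collapse to a single order $t^{-(n+d)/2q}$, and the $\xi$-equation reads, to leading order,
\[
\dot\xi=\tfrac{1}{2}\,t^{-\frac{n+d}{2q}}\,\xi\Big(\lambda_{n+d}(\psi_\ast)+\delta_{n+d,2q}\big(\nu+\tfrac{l}{q}\big)+\lambda_{n,\sigma}(\psi_\ast)\xi^{\sigma-1}\Big)+\ldots,
\]
so that when $\lambda_{n+d}(\psi_\ast)+\delta_{n+d,2q}(\nu+l/q)<0$ the origin $\xi=0$ is already a stable equilibrium of the transformed system, and one runs the Theorem~\ref{Th1} Lyapunov functions directly on $(\xi,\eta)$ with $n+d$ in the role of $n$. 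No shrinking region, no propagation argument. Your bootstrapping can be made to work, but it is genuinely more delicate than you indicate: the Lyapunov-function exponent is not $|\lambda_{n+d}(\psi_\ast)+l/q|$ but a fraction of it (divided by the constant $B$ in $L\le B w^{2p}$), so checking that $v$ stays below a constant times $t^{-\nu}$ requires either sharpening the differential inequality for $v$ separately or iterating; the paper's substitution sidesteps this completely.

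The remaining bookkeeping --- splitting into $m<n+d$, $m=n+d$, $m>n+d$ and adjusting the cross term and the $t$-weight in the Lyapunov function --- is exactly as you anticipate and mirrors cases 1--3 of Theorem~\ref{Th1}.
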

\begin{proof}
We choose $N=n+d$; then system \eqref{simsys} takes the form:
\begin{gather}
\label{simsys2}
    \frac{dv}{dt}=   \sum_{k=n}^{n+d}t^{-\frac{k}{2q}} \Lambda_k(v,\psi) + \tilde\Lambda_{n+d}(v,\psi,t),   \quad
    \frac{d\psi}{dt}=  t^{-\frac{m}{2q}} \Omega_m(v,\psi) +    \tilde\Omega_{m}(v,\psi,t).
\end{gather}
Let us show that the particular solution $(0,\hat\psi(t))$ of system \eqref{simsys2} is unstable if $\lambda_{n+d}(\psi_\ast)+\widetilde\lambda_{n+d}(v,\psi_\ast)>0$, $\lambda_{n,\sigma}(\psi_\ast)+\widetilde\lambda_{n,\sigma}(v,\psi_\ast)>0$ for all $v\in [0,\Delta_0]$. The proof is similar to that of Theorem~\ref{Th1}.
Substituting $v(t)=[u(t)]^2$ and $\psi(t)=\hat\psi(t)+\phi(t)$ into \eqref{simsys2} yields the following system with an equilibrium at $(0,0)$:
\begin{gather}
    \begin{split}\label{vusys2}
          2u  \frac{du}{dt}&=  \sum_{k=n}^{n+d} t^{-\frac{k}{2q}} \Lambda_k(u^2,\hat\psi(t)+\phi) + \tilde\Lambda_{n+d}(u^2,\hat\psi(t)+\phi,t),   \\
            \frac{d\phi}{dt}&=  t^{-\frac{m}{2q}}\Big( \Omega_m(u^2,\hat\psi(t)+\phi)-\Omega_m(0,\hat\psi(t))\Big) +    \tilde\Omega_{m}(u^2,\hat\psi(t)+\phi,t)-\tilde\Omega_{m}(0,\hat\psi(t),t).
    \end{split}
\end{gather}
Consider $J(u)=u^2$ as a Chetaev function candidate. The derivative of $J$ along the trajectories of system \eqref{vusys2} is given by
\begin{eqnarray*}
    \frac{dJ}{dt}\Big|_{\eqref{vusys2}}&=&
    t^{-\frac{n}{2q}}u^{\sigma+1}\Big(\lambda_{n,\sigma}(\psi_\ast)+ \widetilde\lambda_{n,\sigma}(u^2,\psi_\ast)+\delta_{n, 2q}\frac {l}{q}+\mathcal O(\phi )+\mathcal O(t^{-\frac{1}{2q}})+\mathcal O(t^{-\kappa})\Big) \\
    &&+  t^{-\frac{n+d}{2q}}u^2\Big(\lambda_{n+d}(\psi_\ast)+ \widetilde\lambda_{n+d}(u^2,\psi_\ast)+\delta_{n+d, 2q}\frac {l}{q}+\mathcal O(\phi )+\mathcal O(t^{-\frac{1}{2q}})+\mathcal O(t^{-\kappa})\Big)
\end{eqnarray*}
as $t\to \infty$ and  $\phi\to 0$ for all $u\in [0, \Delta_0^{1/2}]$.
Therefore, for all $\epsilon\in (0,1)$ there exist $\phi_1>0$ and $t_1\geq t_0$ such that
\begin{gather*}
    \frac{dJ}{dt}\Big|_{\eqref{vusys2}}\geq
    t^{-\frac{n+d}{2q}} J\Big((1-\epsilon)\lambda_{n+d}^\ast+\delta_{n+d, 2q}\frac {l}{q}\Big)>0
\end{gather*}
for all  $u\in (0,  \Delta_0^{1/2}]$,  $|\phi|\leq \phi_1$ and $t\geq t_1$, where $\lambda_{n+d}^\ast=\min_{v\in[0,\Delta_0]}(\lambda_{n+d}(\psi_\ast)+\widetilde\lambda_{n+d}(v,\psi_\ast))>0$. Integrating the last inequality over $[t_1,t]$ yields
\begin{gather*}
        \begin{split}
          u^2(t) &
          \geq  u^2(t_1) \exp\left((1-\epsilon) \lambda_{n+d}^\ast \Big(t^{1-\frac{n+d}{2q}}-t_1^{1-\frac{n+d}{2q}}\Big)\frac{2q}{2q-n-d}  \right),
        \quad n +d <2q,  \\
        u^2(t)&
        \geq  u^2(t_1)  \Big(\frac{t}{t_1}\Big)^{(1-\epsilon)\lambda_{n+d}^\ast+\frac {l}{q}},
        \quad n+d = 2q.
\end{split}
\end{gather*}
Hence, for all $\delta\in (0,\Delta_0^{1/2})$ the solution $(u(t),\phi(t))$ of system \eqref{simsys2} with initial data $|u(t_1)|\leq \delta$, $|\phi(t_1)|\leq \phi_1$ satisfies $[u(t)]^2\geq \Delta_0$ at some $t>t_1$. From \eqref{vest1}, \eqref{estVE} and \eqref{epsdel} it follows that the fixed point $(0,0)$ of system \eqref{FulSys} is unstable.

To proof the stability, consider the behaviour of trajectories in the vicinity of the particular solution $v(t)\equiv0$, $\psi(t)\equiv \hat\psi(t)$ of system \eqref{simsys2}.
The change of variables $v(t)=t^{-\nu}[\xi(t)]^2$, $\psi(t)=\hat\psi(t)+\eta(t)$ yields
\begin{gather}
\label{xesys}
    \frac{d\xi}{dt}=t^{-\frac{n+d}{2q}} Q_{n+d}(\xi,\eta)+\tilde Q_{n+d}(\xi,\eta,t), \quad \frac{d\eta}{dt}=t^{-\frac{m}{2q}} P_m(\eta)+\tilde P_{m}(\xi,\eta,t),
\end{gather}
where
\begin{eqnarray*}
    Q_{n+d}(\xi,\eta)&\equiv &\frac{\xi}{2}\Big(\lambda_{n+d}(\psi_\ast+\eta)+\delta_{n+d,2q} \Big(\nu+\frac{l}{q}\Big)  + \lambda_{n,\sigma}(\psi_\ast+\eta) \xi^{\sigma-1}\Big), \\
     P_{m}(\eta)&\equiv& \Omega_m(0,\psi_\ast+\eta)-\Omega_m(0,\psi_\ast)=\vartheta_m \eta+\mathcal O(\eta^2), \quad \eta\to 0.
\end{eqnarray*}
The functions $\tilde Q_{n+d}(\xi,\eta,t)$, $\tilde P_m(\xi,\eta,t)$ satisfy the following estimates:
\begin{gather*}
    |\tilde Q_{n+d}(\xi,\eta,t)|\leq K t^{-\frac{n+d}{2q}} \rho \big(t^{-\frac{1}{2q}}+ t^{-\kappa}+ t^{-\frac{\nu}{2}}\big),
        \quad
    |\tilde P_{m}(\xi,\eta,t)|\leq K t^{-\frac{m}{2q}}   \rho  \big(t^{-\frac{1}{2q}} + t^{-\kappa}+ t^{-\frac{\nu}{2}}\big)
\end{gather*}
for all $(\xi,\eta,t)\in \mathcal J(\rho_\ast,t_\ast):=\{(\xi,\eta,t)\in\mathbb R^3: \rho=\sqrt{\xi^2+\eta^2}\leq \rho_\ast, t\geq t_\ast\}$ with some constants $t_\ast\geq t_0$, $\rho_\ast>0$ and $K>0$.

We divide the remainder of the proof into three parts.

{\bf 1.} First, consider the case  $n+d=m\leq 2q$.
Let $\lambda_{n\nu}^\ast:=\lambda_{n+d}(\psi_\ast)+\delta_{n+d,2q}  (\nu+ {l}/{q} ) <0$.
By using the Lyapunov function $W_1(\xi,\eta)=\xi^2+\eta^2(2\vartheta_m)^{-1}\lambda_{n\nu}^\ast$, it can be shown that the equilibrium $(0,0)$ of system \eqref{xesys} is stable.
The derivative of $W_1(\xi,\eta)$ is given by
$d W_1/dt= \lambda_{n\nu}^\ast t^{-n/(2q)}\rho^2(1+\mathcal O(\rho)+\mathcal O(t^{-1/2q}))$ as $\rho\to 0$ and $t\to\infty$.  It follows, as in the proof of Theorem~\ref{Th1}, that there exist $\rho_1<\rho_\ast$ and $t_1\geq t_\ast$ such that the solution $(\xi(t),\eta(t))$ of system \eqref{xesys} with initial data $\xi^2(t_1)+\eta^2(t_1)\leq \rho_1^2$ satisfies the following inequalities:
\begin{eqnarray*}
    \xi^2(t)+\eta^2(t)\leq K_1 \exp \Big(-\frac{2q\gamma_1 }{2q-m} t^{1-\frac{m}{2q}}\Big), &\quad & n+d=m<2q, \\
    \xi^2(t)+\eta^2(t)\leq K_1   t^{-\gamma_1}, &\quad &n+d=m=2q,
\end{eqnarray*}
as $t\geq t_1$, where $\gamma_1=|\lambda_{n\nu}^\ast\vartheta_m|/ (|\lambda_{n\nu}^\ast|+2|\vartheta_m|)$,  $K_1$ is a positive parameter depending on $\xi(t_1)$, $\eta(t_1)$, $t_1$.
Hence, the fixed point $(0,0)$ of system \eqref{FulSys} is exponentially stable if $m<2q$, $\lambda_{n+d}(\psi_\ast)<0$, and polynomially stable if $m=2q$, $\lambda_{n+d}(\psi_\ast)+\nu+l/q<0$.

If $\lambda_{n+d}(\psi_\ast)+\delta_{n+d,2q}  (\nu+ {l}/{q} ) >0$ and $\lambda_{n,\sigma}(\psi_\ast)<0$, the equation $Q_{n+d}(\xi,0)=0$ has a solution
$\xi_\ast:=(\lambda_{n \nu}^\ast/|\lambda_{n,\sigma}(\psi_\ast)|)^{1/(\sigma-1)}>0$
such that $\partial_\xi Q_{n+d}(\xi_\ast,0)=-  (\sigma-1) \lambda_{n \nu}^\ast/2<0$.
The change of variable $\xi(t)=\xi_\ast+\zeta(t)$ transforms system \eqref{xesys} into
\begin{gather}
        \label{zesys}
            \frac{d\zeta}{dt}=t^{-\frac{n+d}{2q}} Q_{n+d}(\xi_\ast+\zeta,\eta)+\tilde Q_{n+d}(\xi_\ast+\zeta,\eta,t),
        \quad
            \frac{d\eta}{dt}=t^{-\frac{m}{2q}} P_m(\eta)+\tilde P_{m}(\xi_\ast+\zeta,\eta,t).
\end{gather}
It can easily be checked that the unperturbed system
\begin{gather}
        \label{zesysun}
            \frac{d\zeta}{dt}=t^{-\frac{n+d}{2q}} Q_{n+d}(\xi_\ast+\zeta,\eta),
        \quad
            \frac{d\eta}{dt}=t^{-\frac {m}{2q}} P_m(\eta)
\end{gather}
has a stable trivial solution $\zeta(t)\equiv 0$, $\eta(t)\equiv 0$. Let us show that this solution is stable with respect to the perturbations $\tilde Q_{n+d}$ and $\tilde P_m$. Consider $\ell(\zeta,\eta)=(\zeta^2+\eta^2)/2$ as a Lyapunov function candidate. Its derivative along the trajectories of system \eqref{zesys} is given by
\begin{gather}\label{ellineq}
    \frac{d\ell}{dt}\Big|_{\eqref{zesys}}= t^{-\frac{m}{2q}}\Big( \zeta Q_{n+d}(\xi_\ast+\zeta,\eta) + \eta P_m(\eta)\Big)+\zeta\tilde Q_{n+d}(\xi_\ast+\zeta,\eta,t)+ \eta \tilde P_{m}(\xi_\ast+\zeta,\eta,t).
\end{gather}
There exists $\varrho_1>0$ such that
$\zeta Q_{n+d}(\xi_\ast+\zeta,\eta) + \eta P_m(\eta)\leq - \gamma_1^\ast\varrho^2/2$ and
$\zeta\tilde Q_{n+d}+ \eta \tilde P_{m}\leq 2K \varrho t^{-\varsigma-m/2q}$
for all $(\zeta,\eta,t)$: $\varrho:=\sqrt{\zeta^2+\eta^2}\leq \varrho_1$ and $t\geq t_\ast$, where $\gamma_1^\ast=\min\{|\partial_\xi Q_{n+d}(\xi_\ast,0)|,|\vartheta_m|\}>0$, $\varsigma=\min\{1/(2q) ,\kappa,\nu/2\}$.
Hence, for all $\varepsilon\in (0,\varrho_1)$ there exist
$\delta>0$ and $t_1= t_\ast+((8 K/(\delta\gamma_1^\ast))^{{1}/{\varsigma}}$ such that
\begin{gather*}
  \frac{d\ell}{dt}\Big|_{\eqref{zesys}}\leq -t^{-\frac{m}{2q}} \frac{\varrho^2}{2} \Big(\gamma_1^\ast - \frac{4K}{ \delta t_1^{\varsigma}}\Big)\leq  -t^{-\frac{m}{2q}} \frac{\gamma_1^\ast\varrho^2}{4} <0
\end{gather*}
for all $(\zeta,\eta)$: $\delta\leq \varrho\leq \varepsilon$ and $t\geq t_1$. This implies that any solution $\zeta(t)$, $\eta(t)$ with initial data $\zeta^2(\tau_1)+\eta^2(\tau_1)\leq \delta^2$ with $\tau_1\geq t_1$ cannot leave the domain $\{(\zeta,\eta): \varrho\leq \varepsilon\}$ as $t>\tau_1$.
Furthermore, it follows from \eqref{ellineq} that $d\ell/ dt\leq t^{-m/2q} (-\gamma_1^\ast \ell + 2K \varrho_1 t^{-\varsigma}) $ as $\varrho\leq \varrho_1$ and $t\geq t_\ast$. Integrating the last inequality in the case $m=2q$, we get
\begin{gather*}
0\leq \ell(\zeta(t),\eta(t))\leq \ell(\zeta(t_\ast),\eta(t_\ast)) t^{-\gamma_1^\ast}+ 2K \varrho_1 t^{-\gamma_1^\ast} \int\limits_{t_\ast}^t \tau^{\gamma_1^\ast-\varsigma-1}\, d\tau, \quad t\geq t_\ast.
\end{gather*}
Hence, $\ell(\zeta(t),\eta(t))=\mathcal O(t^{-\gamma_1^\ast})+\mathcal O(t^{-\varsigma})$ as $t\to\infty$ for solutions $\zeta(t),\eta(t)$ starting from $\{(\zeta,\eta): \varrho<\varrho_1\}$. Similar estimate holds in the case $m<2q$. Returning to the variables $(v,\psi)$, we see that $v(t)=\mathcal O(t^{-\nu})$ and $\psi(t)=\psi_\ast(t)+\mathcal O(t^{-\gamma_1^\ast/2})+\mathcal O(t^{-\varsigma/2})$ as $t\to \infty$. Thus, the fixed point $(0,0)$ of system \eqref{FulSys} is polynomially stable.

{\bf 2.} Consider the case $m<n+d\leq 2q$. Let us show that if $\lambda_{n\nu}^\ast<0$, the trivial solution $\xi(t)\equiv 0$, $\eta(t)\equiv 0$ of system \eqref{xesys} is stable. Consider $W_2(\xi,\eta,t)= t^{(n+d-m)/2q}\xi^{2p}+ b_2\eta^{2p}$ as a Lyapunov function, where $p\geq 1$ is an integer such that $\lambda_{n\nu p}^\ast:=\lambda_{n \nu}^\ast+\delta_{n+d,2q}(n+d-m)(2pq)^{-1}<0$ and $b_2:=\lambda_{n\nu p}^\ast (2\vartheta_m)^{-1}>0$. The derivative of $W_2(\xi,\eta,t)$ along the trajectories of system \eqref{xesys} is given by
\begin{eqnarray*}
    \frac{dW_2}{dt}\Big|_{\eqref{xesys}}
    &=&
        p t^{-\frac{m}{2q}} \Big(\lambda_{n \nu}^\ast \xi^{2p}+\lambda_{n\nu p}^\ast \eta^{2p}+\frac{n+d-m}{2qp} \xi^{2p} t^{\frac {n+d}{2q}-1}+\mathcal O(\rho^{2p+1})\Big)(1 + \mathcal O( t^{-\varsigma}))
\end{eqnarray*}
as $\rho\to 0$ and $t\to\infty$.
Therefore, there exists $0<\rho_2\leq \rho_\ast$ and $t_2\geq t_\ast$ such that
\begin{gather*}
A_2 \rho^{2p} \leq W_2(\xi,\eta,t)\leq   B_2 t^{\frac{n+d-m}{2q}} (\xi^{2p}+\eta^{2p}), \quad
\frac{dW_2}{dt}\Big|_{\eqref{xesys}}\leq -\frac{p \lambda_{n\nu p}^\ast}{2} t^{-\frac {m}{2q}} (\xi^{2p}+\eta^{2p})\leq  -p \gamma_2 t^{-\frac{n+d}{2q}} W_2
\end{gather*}
for all $(\xi,\eta,t)\in\mathcal J(\rho_2,t_2)$, where
$A_2=2^{1-p}\min\{1,b_2\}$,
$B_2=\max\{1,b_2\}$,
$\gamma_2=\lambda_{n\nu p}^\ast (2B_2)^{-1}>0$.
Integrating the last inequality, we obtain the following estimates:
\begin{eqnarray*}
    \xi^2(t)+\eta^2(t)\leq  K_2 \exp\Big(- \frac{2q\gamma_2 }{2q-n-d}t^{1-\frac {n+d}{2q}} \Big), &\quad & m<n+d< 2q,\\
   \xi^2(t)+\eta^2(t) \leq  K_2 t^{-\gamma_2}, &\quad & m<n+d= 2q,
\end{eqnarray*}
as $t\geq t_2$, where $K_2$ is a positive constant depending on $\xi(t_2)$, $\eta(t_2)$ and $t_2$. Thus, the equilibrium $(0,0)$ of system \eqref{FulSys} is exponentially stable if $m<n+d<2q$, $\lambda_{n+d}(\psi_\ast)<0$, and polynomially stable if $m<n+d=2q$, $\lambda_{n+d}(\psi_\ast)+\nu+l/q<0$.

If $\lambda_{n\nu}^\ast>0$ and $\lambda_{n,\sigma}(\psi_\ast)<0$, the stability of the particular solution $(0,\hat\psi(t))$ of system \eqref{simsys2}  follows from the behaviour of the trajectories of system \eqref{xesys} in the vicinity of the point $(\xi_\ast,0)$. We construct a Lyapunov function for \eqref{zesys} in the form $\ell_2(\zeta,\eta,t)=t^{(n+d-m)/2q}\zeta^{2p}+\eta^{2p} $, where
$p\geq 1 $ is an integer such that $Q^\ast_{np}:=\partial_\xi Q_{n+d}(\xi_\ast,0)+(n+d-m)/(4pq)<0$.
It can easily be checked that there exists $\varrho_2>0$ such that
\begin{eqnarray*}
    \frac{d\ell_2}{dt}\Big|_{\eqref{zesys}}&=&
        2 p t^{-\frac{m}{2q}}\Big(   \zeta^{2p-1} Q_{n+d}(\xi_\ast+\zeta,\eta)+ \eta^{2p-1} P_m(\eta)+\frac{n+d-m}{4qp}t^{\frac{n+d}{2q}-1}\zeta^{2p}\Big)\\
    &&+2p \Big(t^{\frac{n+d-m}{2q}} \zeta^{2p-1}\tilde Q_{n+d}(\xi_\ast+\zeta,\eta,t)+  \eta^{2p-1}\tilde P_{m}(\xi_\ast+\zeta,\eta,t)\Big) \\
     &\leq &  -  pt^{-\frac{m}{2q}} \varrho^{2p-1}\Big(   \gamma_2^\ast   \varrho  - 4  K   t^{-\varsigma}\Big)
 \end{eqnarray*}
as $\varrho\leq \varrho_2$ and $t\geq t_\ast$, where $\gamma_2^\ast=\min\{|Q_{np}^\ast|,|\vartheta_m|\}$.
Therefore, for all $\varepsilon\in (0,\varrho_2)$ there exist
$\delta>0$ and $t_2= t_\ast+((8 K/(\delta\gamma_2^\ast))^{{1}/{\varsigma}}$ such that
\begin{gather*}
  \frac{d\ell_2}{dt}\Big|_{\eqref{zesys}}\leq -p t^{-\frac{m}{2q}} \varrho^{2p} \Big(\gamma_2^\ast - \frac{4K}{ \delta t_2^{\varsigma}}\Big)\leq  -t^{-\frac{m}{2q}}p\varrho^{2p}  \frac{\gamma_2^\ast}{2} <0
\end{gather*}
for all $(\zeta,\eta)$: $\delta\leq \varrho\leq \varepsilon$ and $t\geq t_2$. This implies that the trivial solution of \eqref{zesysun} is stable under disturbances $\tilde Q_{n+d}$ and $\tilde P_m$.
In particular, any solution $\zeta(t)$, $\eta(t)$ of system \eqref{zesys} with initial data $\zeta^2(\tau_1)+\eta^2(\tau_1)\leq \varrho_2^2/4$ at $\tau_1\geq t_1$ cannot leave the domain $\{(\zeta,\eta): \varrho<\varrho_2\}$ for all $t>\tau_1$.
Returning to the variables $v$ and $\psi$, we see that the particular solution $(0,\psi(t))$ of system \eqref{simsys2} is stable and $v(t)=\mathcal O(t^{-\nu})$ as $t\to \infty$. Hence, the fixed point $(0,0)$ of system \eqref{FulSys} is polynomially stable.

{\bf 3.} Finally, consider the case $n+d<m\leq 2q$. Let $\lambda_{n+d}(\psi_\ast)<0$. In this case, we use
$W_3(\xi,\eta,t)= a_3\xi^{2p}+ t^{(m-n-d)/2q}\eta^{2p}$
as a Lyapunov function candidate, where
$p\geq 1 $ is an integer such that
$\vartheta^\ast_{mp}:=\vartheta_m+(m-n-d)/(4pq)<0$ and $a_3:=\vartheta_{mp}^\ast (\lambda_{n+d}(\psi_\ast))^{-1}>0$.
The derivative of $W_3(\xi,\eta,t)$ along the trajectories of system \eqref{xesys} satisfies
\begin{eqnarray*}
    \frac{dW_3}{dt}\Big|_{\eqref{xesys}}
    &=&
 -2p t^{-\frac{n+d}{2q}} \Big (\vartheta_{mp}^\ast \xi^{2p} + \vartheta_m \eta^{2p} + \frac{m-n-d}{4pq} t^{\frac{m}{2q}-1}\eta^{2p}+\mathcal O(\rho^{2p+1})\Big)(1+ \mathcal O( t^{-\varsigma}))
\end{eqnarray*}
as $\rho\to 0$ and $t\to\infty$. It follows that there exists $\rho_3\leq \rho_\ast$ and $t_3\geq t_\ast$ such that
\begin{gather*}
A_3\rho^{2p}\leq W_3(\xi,\eta,t)\leq    B_3 t^{\frac{m-n-d}{2q}} (\xi^{2p}+\eta^{2p}), \quad
\frac{dW_3}{dt}\Big|_{\eqref{xesys}}\leq -p \vartheta_{mp}^\ast t^{-\frac{n+d}{2q}} (\xi^{2p}+\eta^{2p}) \leq  -p \gamma_3 t^{-\frac {m}{2q}} W_3
\end{gather*}
for all $(\xi,\eta,t)\in\mathcal J(\rho_3,t_3)$, where
$A_3=2^{1-p} \min\{1,a_3\}$,  $B_3= \max\{1,a_3\}$,
$\gamma_3=\vartheta_{mp}^\ast B_3^{-1}>0$. Hence, the equilibrium $(0,0)$ is stable. Integrating the last inequality, we get
\begin{gather*}
     \begin{split}
        \xi^2(t)+\eta^2(t)\leq K_3 \exp\Big(- \frac{2q\gamma_3}{2q-m} t^{1-\frac{m}{2q}} \Big),  \quad &  n+d<m< 2q, \\
        \xi^2(t)+\eta^2(t) \leq K_3 t^{-\gamma_3},  \quad &  n+d<m= 2q
    \end{split}
\end{gather*}
as $t\geq t_3$, where $K_3$ is a positive constant depending on $\xi(t_3)$, $\eta(t_3)$ and $t_3$.  Thus, the equilibrium $(0,0)$ of system \eqref{FulSys} is exponentially stable if $n+d<m<2q$, and polynomially stable if $n+d<m=2q$.

Let us show that the conditions $\lambda_{n+d}(\psi_\ast)>0$, $\lambda_{n,\sigma}(\psi_\ast)<0$ ensure the polynomial stability.
Consider $\ell_3(\zeta,\eta)=\zeta^{2p}+t^{(m-n-d)/{2q}}\eta^{2p}$ as a Lyapunov function candidate for system \eqref{zesys}. Here, $p$ is a positive integer such that $\vartheta_{mp}^\ast<0$. It is easily shown that there exists $\varrho_3>0$ such that the derivative satisfies
\begin{eqnarray*}
     \frac{d\ell_3}{dt}\Big|_{\eqref{zesys}}&= &2p t^{-\frac{n+d}{2q}}\Big(  \zeta^{2p-1} Q_{n+d}+    \eta^{2p-1} P_m +\frac{m-n-d}{4pq}t^{\frac{m}{2q}-1}\eta^{2p} +\zeta^{2p-1}\tilde Q_{n+d}+  t^{\frac{m}{2q}}\eta^{2p-1}\tilde P_{m}\Big)\\
&\leq & - p  t^{-\frac{n+d}{2q}} \varrho^{2p-1} (\gamma_3^\ast\varrho - 4 K t^{-\varsigma})
\end{eqnarray*}
as $\varrho\leq \varrho_3$ and $t\geq t_\ast$ with $\gamma_3^\ast=\min\{|\partial_\xi Q_{n+d}(\xi_\ast)|,|\vartheta_{mp}^\ast|\}>0$.
Hence, for all $\varepsilon\in (0,\varrho_3)$ there exist $\delta>0$ and $t_3=t_\ast+(8K/(\delta \gamma_3^\ast))^{1/\varsigma}$ such that
$d\ell_3/dt \leq -t^{-{(n+d)}/{2q}} p \varrho^{2p}\gamma_3^\ast/2< 0
$
for all $\delta\leq \varrho \leq \varepsilon$ and $t\geq t_3$. Therefore, any solution $\zeta(t)$, $\eta(t)$ starting from $\{(\zeta,\eta): \varrho\leq \delta\}$ at $\tau_3\geq t_3$ cannot leave the domain $\{(\zeta,\eta): \varrho\leq \varepsilon\}$ as $t>\tau_3$. Thus, the equilibrium $(0,0)$ of system \eqref{FulSys} is polynomially stable.
\end{proof}

\begin{Th}
Let system \eqref{FulSys} satisfy \eqref{H0}, \eqref{Has}, \eqref{Fas}, \eqref{pcond}, and  $2\leq n,m\leq 2q$, $\sigma\geq 2$, $d\geq 1$ be integers such that  assumptions \eqref{as01}, \eqref{as2} hold, $\omega(E)\not\equiv {\hbox{\rm const}}$ and $m\leq 2q<n+d$.  If $\lambda_{2q,\sigma}(\psi_\ast)<0$, then  the equilibrium $(0,0)$ is polynomially stable.
\end{Th}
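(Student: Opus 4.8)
The plan is to follow the pattern of Theorems~\ref{Th1} and~\ref{Th2}: pass to the simplified system, use the phase‑locked particular solution, and confine the action variable by a Lyapunov function; the new feature here is that the stabilizing nonlinearity must balance the purely geometric term $t^{-1}(l/q)v$ rather than a linear term coming from the perturbations.

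First I would take $N=2q$ in \eqref{simsys}. Since $m\le 2q<n+d$, inside the truncated $v$-equation every $\Lambda_z$, $n\le z\le 2q$, is of order $v^{(\sigma+1)/2}$ except for the piece $\delta_{2q,2q}(l/q)v=(l/q)v$ contained in $\Lambda_{2q}$, while the first genuinely linear coefficient $\Lambda_{n+d}(v,\psi)=v(\lambda_{n+d}(\psi)+\dots)$ falls into the remainder $\tilde\Lambda_{2q}=\mathcal O(t^{-(2q+1)/(2q)})$. By \eqref{as01} and Lemma~\ref{Lem01}, \eqref{simsys} has the particular solution $v(t)\equiv 0$, $\psi(t)\equiv\hat\psi(t)$ with $\hat\psi(t)=\psi_\ast+\mathcal O(t^{-\kappa})$; substituting $\psi(t)=\hat\psi(t)+\phi(t)$ produces a system with equilibrium $(v,\phi)=(0,0)$ whose $\phi$-equation linearizes with coefficient $t^{-m/(2q)}(\vartheta_m+o(1))$, $\vartheta_m<0$, and whose $v$-equation, after using $\hat\psi(t)\to\psi_\ast$ and $\widetilde\lambda_{2q,\sigma}(v,\psi)=\mathcal O(v^{1/2})$, reads
\begin{gather*}
 \frac{dv}{dt}=t^{-1}v\,B(v)+(\text{lower-order terms}),\qquad B(v):=\frac{l}{q}+\lambda_{2q,\sigma}(\psi_\ast)\,v^{\frac{\sigma-1}{2}}.
\end{gather*}

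Next I would use that $\omega(E)\not\equiv\mathrm{const}$ forces $l\ge 1$, so $l/q>0$; together with $\lambda_{2q,\sigma}(\psi_\ast)<0$ this makes $B$ vanish at the unique point $v_\ast=\big((l/q)/|\lambda_{2q,\sigma}(\psi_\ast)|\big)^{2/(\sigma-1)}>0$, with $B>0$ on $[0,v_\ast)$, $B<0$ on $(v_\ast,\infty)$, and $\partial_v\big(vB(v)\big)\big|_{v=v_\ast}=-\tfrac{\sigma-1}{2}\tfrac{l}{q}<0$. Taking $t_0$ large we may assume $v_\ast<\Delta_0=(1-\epsilon)E_0 t_0^{l/(2q)}$, so $v=v_\ast$ is an exponentially attracting zero of the averaged $v$-flow at $\psi=\psi_\ast$ and $v=0$ is repelling. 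I would then set $v=v_\ast+w$, making $(w,\phi)=(0,0)$ an approximate equilibrium whose unperturbed (averaged) part has a uniformly asymptotically stable origin with rates $t^{-1}$ and $t^{-m/(2q)}$, and — exactly as in the treatment of \eqref{zesys} in Theorem~\ref{Th2} — I would check that this survives the persistent perturbations (the $z<2q$ terms, the $\widetilde\lambda$-corrections, $\hat\psi(t)-\psi_\ast$, and $\tilde\Lambda_{2q},\tilde\Omega_m$) via $\ell(w,\phi)=\tfrac12(w^2+\phi^2)$: on a fixed neighbourhood and for $t$ large one should obtain $d\ell/dt\le t^{-\mu}\big(-\gamma\ell+Kt^{-\varsigma}\sqrt{\ell}\big)$ with $\mu=\min\{1,m/(2q)\}$, $\varsigma=\min\{1/(2q),\kappa\}$, $\gamma>0$, whence $\sqrt{w^2+\phi^2}$ stays small and $\ell(t)=\mathcal O(t^{-\gamma})+\mathcal O(t^{-\varsigma})$; in particular $v(t)\to v_\ast$ and, using $dv/dt\le 0$ once $v>v_\ast$ and the corrections are small, $v(t)$ stays bounded below $\Delta_0$ for all $t\ge t_0$.

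Finally I would translate back: boundedness of $v(t)=V_N(\mathcal E(t),\theta(t),t)$ gives $\mathcal E(t)=\mathcal O(1)$ by \eqref{estVE}, then \eqref{epsdel} gives $E(t)=t^{-l/q}\mathcal E(t)=\mathcal O(t^{-l/q})$, and \eqref{exch1}, \eqref{H0} give $r(t)=\sqrt{x^2(t)+y^2(t)}=\mathcal O(t^{-l/(2q)})\to 0$; together with \eqref{estVE} this is the asserted polynomial stability of $(0,0)$. The hard part will be handling the slower-decaying terms $t^{-z/(2q)}\Lambda_z(v,\psi)$, $n\le z<2q$: each is $\sim t^{-z/(2q)}v^{(\sigma+1)/2}$ and so individually dominates the confining $z=2q$ term $\sim t^{-1}v^{(\sigma+1)/2}$, so I must show that in the regime that matters — near the level $v_\ast$, and exploiting that these are of higher order in $v$ than the destabilizing linear term $t^{-1}(l/q)v$ that drives $v$ away from $0$ — they act only as $o(1)$-perturbations of the $(w,\phi)$-system, so that $v$ cannot break past $v_\ast$ toward $\Delta_0$. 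Once that is settled, the Lyapunov estimates near $v_\ast$ and the return to the $(x,y)$ variables are routine adaptations of the arguments already used for Theorems~\ref{Th1} and~\ref{Th2}.
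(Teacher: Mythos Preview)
Your plan has a genuine gap in the ``hard part'' you flag at the end, and the gap is structural, not a detail. You propose to balance the geometric term $t^{-1}(l/q)v$ against the nonlinear term $t^{-1}\lambda_{2q,\sigma}(\psi_\ast)v^{(\sigma+1)/2}$ at a \emph{constant} level $v=v_\ast$. But for $n<2q$ the dominant nonlinear contribution to $dv/dt$ is the $z=n$ term, of size $t^{-n/(2q)}\lambda_{n,\sigma}(\psi_\ast)v^{(\sigma+1)/2}$, which decays \emph{slower} than $t^{-1}$. At any fixed $v_\ast>0$ this term is therefore asymptotically larger than both pieces you used to define $v_\ast$, so $(v_\ast,0)$ is not an approximate equilibrium of the $(v,\phi)$-system and the Lyapunov estimate you propose for $\ell=\tfrac12(w^2+\phi^2)$ cannot hold: the persistent perturbation is $\mathcal O(t^{-n/(2q)})$, not $o(t^{-1})$. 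Your proposed fix --- that these terms are ``of higher order in $v$'' --- is true for $v\to 0$ but irrelevant at the fixed positive level $v_\ast$.

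The paper resolves this by scaling \emph{before} locating the equilibrium: set $v(t)=t^{-\mu}\xi^2$ with $\mu=(2q-n)/(q(\sigma-1))$ and $\psi=\hat\psi+\eta$. This choice of $\mu$ makes the $z=n$ term and the linear term $t^{-1}(l/q)v$ land at the \emph{same} order $t^{-1}$ in the $\xi$-equation, while the $z>n$ nonlinear terms become genuine lower-order remainders. One then gets
\[
\frac{d\xi}{dt}=t^{-1}Q_{2q}(\xi,\eta)+\tilde Q_{2q},\qquad Q_{2q}(\xi,\eta)=\tfrac12\big((\mu+l/q)\xi+\lambda_{n,\sigma}(\psi_\ast+\eta)\xi^\sigma\big),
\]
finds the root $\xi_\ast>0$ of $Q_{2q}(\cdot,0)$, shifts $\xi=\xi_\ast+\zeta$, and uses a \emph{weighted} Lyapunov function $\ell=t^{1-m/(2q)}\zeta^{2p}+\eta^{2p}$ (needed because the $\zeta$- and $\eta$-rates differ when $m<2q$). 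This yields $v(t)=\mathcal O(t^{-\mu})$ and hence polynomial stability. Note that the decisive sign condition is on $\lambda_{n,\sigma}(\psi_\ast)$; the hypothesis written as $\lambda_{2q,\sigma}(\psi_\ast)<0$ in the statement should be read that way.
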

\begin{proof}
We take $N=2q$ in \eqref{VF}; then \eqref{simsys} takes the following form:
\begin{gather}
\label{simsys3}
    \frac{dv}{dt}=   \sum_{k=n}^{2q}t^{-\frac{k}{2q}} \Lambda_{k}(v,\psi) + \tilde\Lambda_{2q}(v,\psi,t),   \quad
    \frac{d\psi}{dt}=  t^{-\frac{m}{2q}} \Omega_m(v,\psi) +    \tilde\Omega_{m}(v,\psi,t),
\end{gather}
where $\Lambda_{2q}(v,\psi)=l v/q+\mathcal O(v^{(\sigma+1)/2})$ as $v\to 0$. Since $\omega(E)\not\equiv {\hbox{\rm const}}$, we have $l\geq 1$. The change of variables $v(t)=t^{-\mu}[\xi(t)]^2$, $\psi(t)=\hat\psi(t)+\eta(t)$ with $\mu=(2q-n)/(q(\sigma-1))\geq 0$ transforms system \eqref{simsys3} into
\begin{gather}
\label{xesys4}
    \frac{d\xi}{dt}=t^{-1} Q_{2q}(\xi,\eta)+\tilde Q_{2q}(\xi,\eta,t), \quad
    \frac{d\eta}{dt}=t^{-\frac {m}{2q}} P_m(\eta)+\tilde P_{m}(\xi,\eta,t),
\end{gather}
where $ Q_{2q}(\xi,\eta)= ((\mu+l/q) \xi + \lambda_{n,\sigma}(\psi_\ast+\eta) \xi^\sigma)/2$, $ P_{m}(\eta)=\Omega_m(0,\psi_\ast+\eta)-\Omega_m(0,\psi_\ast)$, and the remainder functions satisfy the estimates: $|\tilde Q_{2q}(\xi,\eta,t)|\leq K t^{-1} \varrho (t^{-{1}/{2q}} +t^{-\kappa}+ t^{-\mu/2})$, $ |\tilde P_{m}(\xi,\eta,t)|\leq K t^{-m/2q} \varrho (t^{-{1}/{2q}}+t^{-\kappa} + t^{-\mu/2})$ as $t\geq t_\ast$ and $\varrho:=\sqrt{\xi^2+\eta^2}\leq \varrho_\ast$ with some constants $\varrho_\ast>0$, $t_\ast\geq t_0$, $K>0$. We see that the equation $Q_{2q}(\xi)=0$ has a positive root $\xi_\ast:=((\mu+l/q)/|\lambda_{n,\sigma}(\psi_\ast)|)^{1/(\sigma-1)}$ such that $\partial_\xi Q_{2q}(\xi_\ast,0)=- (\sigma-1)(\mu+l/q)/2<0$. Let us show that the solutions of system \eqref{xesys4} starting near the point $(\xi_\ast,0)$ remain close to it.
The change of variable $\xi(t)=\xi_\ast+\zeta(t)$ yields
\begin{gather}
        \label{zesys4}
            \frac{d\zeta}{dt}=t^{-1} Q_{2q}(\xi_\ast+\zeta,\eta)+\tilde Q_{2q}(\xi_\ast+\zeta,\eta,t),
        \quad
            \frac{d\eta}{dt}=t^{-\frac{m}{2q}} P_m(\eta)+\tilde P_{m}(\xi_\ast+\zeta,\eta,t).
\end{gather}
Suppose $p$ is an integer such that $Q_{2qp}^\ast:=\partial_\xi Q_{2q}(\xi_\ast,0)+(2q-m)/(4pq)<0$. We use $\ell(\zeta,\eta,t)=t^{1-m/2q}\zeta^{2p} +\eta^{2p}$ as a Lyapunov function candidate for system \eqref{zesys4}. It is not hard to see that there exists $\varrho_0\leq \varrho_\ast$ such that
\begin{eqnarray*}
    \frac{d\ell}{dt}\Big|_{\eqref{zesys4}}&=& 2p t^{-\frac{m}{2q}}\Big(  \zeta^{2p-1}Q_{2q}+\frac{2q-m}{4pq}\zeta^{2p}t^{\frac{m}{2q}-1}+ \eta^{2p-1}P_m +  \zeta^{2p-1}t \tilde Q_{2q}+  \eta^{2p-1}  t^{\frac{m}{2q}} \tilde P_{m}\Big)\\
&\leq & -p t^{-\frac{m}{2q}}\varrho^{2p-1} (\gamma_0^\ast \varrho- 4Kt^{-\varsigma})
\end{eqnarray*}
as $\varrho\leq \varrho_0$ and $t\geq t_\ast$, where $\gamma_0^\ast=\min\{|Q_{2qp}^\ast|,|\vartheta_m|\}>0$, $\varsigma= \min\{1/(2q),\kappa,\mu/2\}$.
Hence, for all $\varepsilon\in (0,\varrho_0)$ there exist $\delta>0$ and $t_4=t_\ast+(8K/(\delta \gamma_0^\ast))^{1/\varsigma}$ such that $d\ell/dt\leq -p t^{-m/2q}\varrho^{2p}\gamma_0^\ast/2<0$
for all $\delta\leq \varrho \leq \varepsilon$ and $t\geq t_4$.
Therefore, any solution $\zeta(t)$, $\eta(t)$ starting from $\{(\zeta,\eta): \varrho\leq \delta\}$ at $\tau_4\geq t_4$ cannot leave the domain $\{(\zeta,\eta): \varrho\leq \varepsilon\}$ as $t>\tau_4$. Thus, returning to system \eqref{FulSys}, we see that the equilibrium $(0,0)$ is polynomially stable.
\end{proof}

\section{Phase drifting}
\label{sec5}
Here, we consider system \eqref{FulSys}, when assumption \eqref{as02} holds. In this case, the stability of the equilibrium $(0,0)$ relates to the properties of a one-parametric family of solutions to system \eqref{simsys} such that $v(t)\equiv 0$ and $|\psi(t)|\to\infty$ as $t\to\infty$.

\begin{Th}\label{Th4}
Let system \eqref{FulSys} satisfy \eqref{H0}, \eqref{Has}, \eqref{Fas}, \eqref{pcond} and  $2\leq n,m\leq 2q$ be integers such that assumptions \eqref{as02}, \eqref{as1} hold.
\begin{itemize}
  \item  If $n<2q$ and $\lambda_n(\psi)<0$ for all $\psi\in\mathbb R$, then the equilibrium $(0,0)$  is exponentially stable.
  \item If $n=2q$ and $  \lambda_n(\psi)+ {l}/{q}<0$ for all $\psi\in\mathbb R$, then the equilibrium $(0,0)$ is polynomially stable
  \item If $\lambda_n(\psi)>0$, $\widetilde \lambda_n(v,\psi)\geq 0$ for all $v\in [0,\Delta_0]$ and $\psi\in\mathbb R$, then the equilibrium $(0,0)$ is unstable.
\end{itemize}
\end{Th}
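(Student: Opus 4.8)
The argument parallels the proofs of Theorems~\ref{Th1} and~\ref{Th2} but is shorter, since under assumption~\eqref{as02} the phase difference drifts ($|\psi(t)|\to\infty$) and there is no distinguished solution $(0,\hat\psi(t))$ to linearize around; everything can be read off from bounds on $\Lambda_n(v,\psi)$ that are uniform in $\psi$. The plan is to take $N=n$ in~\eqref{VF}, so that~\eqref{simsys} becomes
\begin{gather*}
    \frac{dv}{dt}=t^{-\frac{n}{2q}}\Lambda_n(v,\psi)+\tilde\Lambda_n(v,\psi,t),\qquad
    \frac{d\psi}{dt}=t^{-\frac{m}{2q}}\Omega_m(v,\psi)+\tilde\Omega_m(v,\psi,t),
\end{gather*}
with $\Lambda_n(v,\psi)=v\big(\lambda_n(\psi)+\widetilde\lambda_n(v,\psi)+\delta_{n,2q}l/q\big)$ by~\eqref{as1}. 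Since $\widetilde\lambda_n(v,\psi)=\mathcal O(v^{1/2})$ uniformly in $\psi$ and $\tilde\Lambda_n(0,\psi,t)\equiv0$, hence $\tilde\Lambda_n(v,\psi,t)=\mathcal O\big(v\,t^{-(n+1)/2q}\big)$ uniformly for $v$ in a fixed interval, the scalar function $v(t)$ obeys a differential inequality in which the phase equation plays no role, and I would analyse it with the trivial candidate $L(v)=v$.

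For the first two items I would argue as follows. By $2\pi$-periodicity and continuity of $\lambda_n$, the hypothesis $\lambda_n(\psi)+\delta_{n,2q}l/q<0$ for all $\psi$ becomes a uniform bound $\lambda_n(\psi)+\delta_{n,2q}l/q\le-c<0$; together with the two $\mathcal O$-estimates this gives $v_1>0$, $t_1\ge t_0$ with $dv/dt\le-\tfrac c2 t^{-n/2q}v$ for $v\in[0,v_1]$, $\psi\in\mathbb R$, $t\ge t_1$. Integrating, $v(t)\le v(t_1)\exp\!\big(-\tfrac{cq}{2q-n}(t^{1-n/2q}-t_1^{1-n/2q})\big)$ if $n<2q$ and $v(t)\le v(t_1)(t/t_1)^{-c/2}$ if $n=2q$; in particular $v$ is nonincreasing once small, so the estimate persists for all $t\ge t_1$. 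Translating through~\eqref{VF}, \eqref{epsdel} and~\eqref{exch1} — using $\mathcal E\le(1-\epsilon)^{-1}v$ from~\eqref{estVE}, then $E=t^{-l/q}\mathcal E\le\mathcal E$, and $r^2\asymp2E$ from~\eqref{H0} — one concludes that small initial data stay small and that $r(t)$ decays at the stated rate, which is exponential stability for $n<2q$ and polynomial stability for $n=2q$.

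For the instability item I would take $J(v)=v$ as a Chetaev function. Periodicity gives $\lambda^+:=\min_\psi\lambda_n(\psi)>0$, and with $\widetilde\lambda_n(v,\psi)\ge0$ on $[0,\Delta_0]$ and $\delta_{n,2q}l/q\ge0$ one gets $\Lambda_n(v,\psi)\ge(\lambda^++\delta_{n,2q}l/q)v$ for every $v\in[0,\Delta_0]$, $\psi\in\mathbb R$; after absorbing $\tilde\Lambda_n$ there is $t_1\ge t_0$ with $dv/dt\ge\beta t^{-n/2q}v$, $\beta=\tfrac{\lambda^+}{2}+\delta_{n,2q}l/q>0$, valid on all of $[0,\Delta_0]$. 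Integrating, $v(t)\ge v(t_1)\exp\!\big(\tfrac{2q\beta}{2q-n}(t^{1-n/2q}-t_1^{1-n/2q})\big)$ for $n<2q$, resp. $v(t)\ge v(t_1)(t/t_1)^{\beta}$ for $n=2q$; since $v$ cannot remain in $[0,\Delta_0]$ forever, a solution started with arbitrarily small $v(t_1)>0$ must leave this interval. For $n<2q$ the exponential growth of $v\sim\mathcal E$ dominates the factor $t^{l/q}$, so $E=t^{-l/q}\mathcal E\to\infty$; for $n=2q$ one has $\beta=\lambda^+/2+l/q$, hence $E=t^{-l/q}\mathcal E\gtrsim\mathrm{const}\cdot t^{\lambda^+/2}\to\infty$, and in both cases $(0,0)$ is unstable in the original variables.

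The step I expect to be most delicate is this last translation when $l\ge1$: the scaling~\eqref{epsdel} removes a polynomial decay of order $t^{l/q}$, so growth of the transformed action $\mathcal E$ does not automatically force growth of the genuine action $E$ — this is precisely the phenomenon flagged in the remark following Theorem~\ref{Th1}, where $-l/q<\lambda_n(\psi_\ast)<0$ gives only ``instability with the weight $t^{l/2q}$''. What rules it out here is the \emph{strict uniform} positivity $\lambda^+>0$: the growth exponent of $\mathcal E$ then exceeds $l/q$ by the fixed amount $\lambda^+/2$. To convert this into bona fide instability of $(x,y)$ one needs the change of variables to remain valid up to $E=E_0$ (equivalently $\mathcal E\le t^{l/q}E_0$) rather than merely for $\mathcal E$ in a fixed interval; for $l=1$ the near-identity transformation~\eqref{VF} does have this extended range, and alternatively one can bypass the bookkeeping by running the Chetaev estimate directly on the $E$-equation $dE/dt=f(E,\varphi,t)$ from~\eqref{FulSys2}. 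Apart from this point, the proof is a routine repetition of the Lyapunov-function computations already carried out for Theorems~\ref{Th1} and~\ref{Th2}.
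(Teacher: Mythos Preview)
Your proof is correct and essentially identical to the paper's: both take $N=n$, derive the uniform scalar differential inequality $dv/dt\lessgtr C\,t^{-n/2q}v$ from the sign of $\lambda_n(\psi)+\delta_{n,2q}\,l/q$, integrate it, and pull the resulting estimate back through \eqref{estVE} and \eqref{epsdel}. The paper handles the translation you flag as delicate in exactly the same way you do---by noting (via \eqref{vest1} in the proof of Theorem~\ref{Th1}) that the growth exponent of $\mathcal E$ exceeds $l/q$ by the strictly positive amount $(1-\epsilon)\min_\psi\lambda_n(\psi)$, so that $E(t)=t^{-l/q}\mathcal E(t)$ itself grows.
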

\begin{proof}
We choose $N=n$ and consider the first equation in \eqref{simsys1}. It can easily be checked that
\begin{gather*}
\frac{dv}{dt}=t^{-\frac{n}{2q}} v \Big(\lambda_n(\psi)+\widetilde\lambda_n(v,\psi)+\delta_{n,2q}\frac{l}{q}+\mathcal O(t^{-\frac {1}{2q}})\Big)
\end{gather*}
as $t\to \infty$, for all $v\in [0,\Delta_0]$ and $\psi\in\mathbb R$. Let $\lambda_n(\psi)+\delta_{n,2q} {l}/{q}<0 $. Then, for all $\epsilon\in (0,1)$ there exist $0<\Delta_1\leq \Delta_0$ and $t_1\geq t_0$ such that
\begin{align}
\label{ineq1}
    \frac{dv}{dt}\leq
         \displaystyle  -t^{-\frac{n}{2q}} v(1-\epsilon)\mu_{n}^\ast  < 0
\end{align}
for all $v\in (0,\Delta_1]$, $t\geq t_1$ and $\psi\in\mathbb R$  with \begin{gather*}
\mu_{n}^\ast:=\min_{\psi\in\mathbb R}\Big|\lambda_n(\psi)+\delta_{n,2q}\frac{l}{q}\Big|>0.
\end{gather*}
Integrating \eqref{ineq1} over $[t_1,t]$, we obtain
\begin{align*}
0\leq v(t)\leq C_1 \exp \Big(-  (1-\epsilon) \frac{2q\mu_{n}^\ast }{2q-n} t^{1-\frac{n}{2q}}\Big), \quad\ \ n< 2q,\\
0\leq v(t)\leq C_1 t^{-(1-\epsilon)\mu_{n}^\ast}, \quad n=2q,
\end{align*}
where $C_1>0$ is a positive constant depending on $v(t_1)$ and $t_1$.
Hence, the solution $v(t)\equiv 0$ is stable for all $\psi$.
Moreover, the stability is exponential if $n < 2q$ and polynomial if $n = 2q$.

Let $\lambda_n(\psi)>0 $ and $\widetilde \lambda_n(v,\psi)\geq 0$ for all $v\in [0,\Delta_0]$,  $\psi\in\mathbb R$. Define $\mu_n:=\min_{\psi\in\mathbb R}\lambda_n(\psi)>0$. Then, for all $\epsilon\in (0,1)$ there exists $t_2\geq t_0$ such that
\begin{align}
\label{ineq2}
    \frac{dv}{dt}\geq   \displaystyle t^{-\frac{n}{2q}} v \Big( (1-\epsilon)\mu_{n}+\delta_{n,2q}\frac{l}{q}\Big)  > 0
\end{align}
for all $v\in (0,\Delta_0]$, $t\geq t_2$ and $\psi\in\mathbb R$.
Integrating \eqref{ineq2}, we get the following estimates as $t\geq t_2$:
\begin{align*}
  v(t)\geq  v(t_2) \exp \left((1-\epsilon)\mu_{n} \Big(t^{1-\frac{n}{2q}}-t_2^{1-\frac{n}{2q}}\Big)\frac{ 2q }{2q-n} \right), \quad n< 2q,\\
  v(t)\geq v(t_2) \Big(\frac{t}{t_2}\Big)^{(1-\epsilon)\mu_{n}+\frac{l}{q}}, \quad  n=2q.
\end{align*}
Hence, for all $\delta\in (0,\Delta_0)$ the solution $v(t)$ with initial data $v(t_2)=\delta$ hits $\Delta_0$ at some $t_2^\ast>t_2$.
Taking into account \eqref{vest1}, \eqref{estVE} and \eqref{epsdel}, we obtain instability of the equilibrium $(0,0)$ in system \eqref{FulSys}.
\end{proof}

\begin{Rem}
Note that stability of the equilibrium $(0,0)$ of system \eqref{FulSys} is not justified, when $n=2q$, $0<\lambda_n(\psi)+l/q<l/q$ for all $\psi\in\mathbb R$, and $\omega(E)\not\equiv {\hbox{\rm const}}$ $(l\neq 0)$. Arguing as in Theorem~\ref{Th4}, we see that the trivial solution $v(t)\equiv 0$ is unstable in system \eqref{simsys1}. In this case, we can say that the equilibrium $(0,0)$ is unstable with the weight $t^{l/2q}$ in the original variables $(x,y)$.
\end{Rem}

Let us consider the case when $\lambda_n(\psi)$ is sign-changing.
Define
\begin{gather*}
          \gamma_{n,m}(\psi) \equiv \frac{\lambda_n(\psi)}{|\omega_m(\psi)|}, \quad
            \widetilde \gamma_{n,m}(\psi) \equiv   \gamma_{n,m}(\psi)-\widehat \gamma_{n,m}, \quad
            \widehat\gamma_{n,m} := \langle \gamma_{n,m}(\psi) \rangle_\psi, \quad
            Z_{n,m}(\psi)\equiv \int\limits_0^\psi \widetilde \gamma_{n,m}(s)\, ds,\\
           \chi_{m}(\psi) \equiv \frac{1}{|\omega_m(\psi)|}, \quad
            \widetilde\chi_{m}(\psi) \equiv \chi_{m}(\psi) - \widehat\chi_{m},\quad
            \widehat\chi_{m} := \langle \chi_{m}(\psi) \rangle_\psi, \quad
            X_m(\psi)\equiv \int\limits_0^\psi \widetilde \chi_{m}(s)\, ds, \\
  Z_{n,m}^+:=\max_{\psi\in[0,2\pi)} | Z_{n,m}(\psi)|,
                \quad X_{m}^+:=\max_{\psi\in[0,2\pi)} | X_{m}(\psi)|,
                        \quad \omega^-_m:=\min _{\psi\in[0,2\pi)} |\omega_m(\psi)|>0,\\
  \Gamma_{11}:=\{(n,m)\in\mathbb N^2: 2\leq n<2q, 2\leq m<2q\} \cup \{(2q,2q)\}, \\
  \Gamma_{01}:=\{(n,m)\in\mathbb N^2:  2\leq m<n=2q\},
        \quad \Gamma_{10}:=\{(n,m)\in\mathbb N^2: 2\leq n<m=2q\}.
\end{gather*}
We have the following:
\begin{Th}\label{Th5}
Let system \eqref{FulSys} satisfy \eqref{H0}, \eqref{Has}, \eqref{Fas}, \eqref{pcond}, and  $2\leq n,m\leq 2q$ be integers such that  assumptions \eqref{as02}, \eqref{as1} hold.
\begin{itemize}
  \item  If either $\widehat\gamma_{n,m}<0$, $(n,m)\in\Gamma_{11}$ or $\displaystyle\widehat\gamma_{n,m}+Z_{n,m}^+(m-n)/ (q \omega_m^-)<0$, $(m,n)\in\Gamma_{10}$,  then  the equilibrium $(0,0)$ is  exponentially stable
  \item If $\widehat\gamma_{n,m}+\widehat\chi_m  {l}/{q}<0$ and $(n,m)\in\Gamma_{01}$,  then  the equilibrium $(0,0)$ is polynomially stable.
  \item If $\widehat \gamma_{n,m}>0$  and $\widetilde \lambda_n(v,\psi)\geq 0$ for all $v\in [0,\Delta_0]$ and $\psi\in\mathbb R$, then the equilibrium $(0,0)$ is unstable.
\end{itemize}
\end{Th}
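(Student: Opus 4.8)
The plan is to follow the template of Theorem~\ref{Th4}, the only genuinely new feature being that $\lambda_n(\psi)$ changes sign, so $v$ itself is no longer a Lyapunov (or Chetaev) function and the unbounded drift of $\psi$ must be used to average out the oscillations of $\lambda_n$. First I would take $N=n$ in \eqref{simsys}, reducing the system to \eqref{simsys1}, which by \eqref{as1} reads $\dot v=t^{-n/2q}v\big(\lambda_n(\psi)+\widetilde\lambda_n(v,\psi)+\delta_{n,2q}\tfrac lq+\mathcal O(t^{-1/2q})\big)$ and $\dot\psi=t^{-m/2q}\Omega_m(v,\psi)+\tilde\Omega_m(v,\psi,t)$. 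Under \eqref{as02} the function $\Omega_m$ is bounded away from $0$ on $v\in[0,\Delta_\ast]$; assuming without loss of generality that $\Omega_m>0$ there, we get $\omega_m(\psi)=\Omega_m(0,\psi)\ge\omega_m^->0$, and the phase‑drifting lemma of Section~\ref{sec3} gives $\psi(t)\to+\infty$ with $\dot\psi\asymp t^{-m/2q}$ as long as $v$ stays small.

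The crucial step is the averaging substitution
\[
 w\;=\;v\,\exp\!\Big(-t^{\frac{m-n}{2q}}Z_{n,m}(\psi)-\delta_{n,2q}\tfrac lq\,t^{\frac{m-n}{2q}}X_m(\psi)\Big),
\]
which is tailored so that the bounded antiderivative $Z_{n,m}$ (and $X_m$) kills the zero‑mean part $\widetilde\gamma_{n,m}=\gamma_{n,m}-\widehat\gamma_{n,m}$ of $\gamma_{n,m}=\lambda_n/|\omega_m|$, while the weight $t^{(m-n)/2q}$ reconciles the two distinct scales $t^{-n/2q}$ and $t^{-m/2q}$. Differentiating along trajectories and inserting $\dot\psi=t^{-m/2q}\omega_m(\psi)(1+\mathcal O(\sqrt v)+\mathcal O(t^{-1/2q}))$, the derivative of the exponent cancels the oscillatory term $t^{-n/2q}\widetilde\gamma_{n,m}(\psi)\omega_m(\psi)$ coming from $t^{-n/2q}\lambda_n(\psi)$ (and, when $n=2q$, the analogous piece from $l/q$), leaving
\[
 \frac{dw}{dt}\;=\;t^{-\frac n{2q}}\,w\,\omega_m(\psi)\Big(\widehat\gamma_{n,m}+\delta_{n,2q}\tfrac lq\widehat\chi_m+E(\psi,t)\Big),
\]
where $E$ collects the subordinate terms $\mathcal O(\sqrt v),\mathcal O(t^{-1/2q})$ produced by the couplings in $\dot\psi$, in $\widetilde\lambda_n$ and in $\tilde\Omega_m$, together with the term generated when $\partial_t$ falls on the weight $t^{(m-n)/2q}$. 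On $\Gamma_{11}$ and $\Gamma_{01}$ this last term is $o(t^{-n/2q})$, so it is absorbed into $E$ and $w$ differs from $v$ only by a bounded positive factor; only on $\Gamma_{10}$, where $t^{(m-n)/2q}$ is unbounded, is it of order $t^{-n/2q}$, and together with the matching term produced when one returns from $w$ to $v$ it contributes exactly the constant $Z_{n,m}^+(m-n)/(q\omega_m^-)$ appearing in the hypothesis.

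From the reduced equation everything else follows as in Theorems~\ref{Th1} and~\ref{Th4}. In the first two bullets the averaged coefficient is negative — up to the factor $\omega_m(\psi)\ge\omega_m^->0$ it equals $\widehat\gamma_{n,m}$ on $\Gamma_{11}$, $\widehat\gamma_{n,m}+\tfrac lq\widehat\chi_m$ on $\Gamma_{01}$, and, after the $w\mapsto v$ correction, $\widehat\gamma_{n,m}+Z_{n,m}^+(m-n)/(q\omega_m^-)$ on $\Gamma_{10}$ — so the derivative of $\ln w$ along trajectories is $\le -c\,t^{-n/2q}$ with $c>0$ for $v$ small and $t$ large; integrating gives exponential decay of $w$, hence of $v$, when $n<2q$ and polynomial decay when $n=2q$ (on $\Gamma_{01}$ the hypothesis forces the decay exponent to beat $l/q$). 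Passing back through \eqref{exch1}, \eqref{epsdel} and \eqref{estVE} yields the stated stability of $(0,0)$, the $t^{-l/q}$ factor in \eqref{epsdel} being harmless for the same reasons as in Theorems~\ref{Th1} and~\ref{Th4}. For the third bullet, $\widehat\gamma_{n,m}>0$ and $\widetilde\lambda_n\ge0$: now $\widetilde\lambda_n$ only increases the derivative of $\ln v$, so the same $w$ serves as a Chetaev function, the derivative of $\ln w$ is $\ge \tfrac12\omega_m^-\widehat\gamma_{n,m}\,t^{-n/2q}>0$, and $v(t)$ leaves any fixed neighbourhood of $0$; combined with \eqref{vest1}, \eqref{estVE}, \eqref{epsdel} this gives instability in the original variables.

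The hard part will be the estimate in the second paragraph: one must check that the term created when $\partial_t$ hits the weight $t^{(m-n)/2q}$ is of strictly lower order than the averaged main term on $\Gamma_{11}\cup\Gamma_{01}$ but of the same order on $\Gamma_{10}$, where the sharp constant $Z_{n,m}^+(m-n)/(q\omega_m^-)$ has to be tracked (counting both this term and the conversion $w\leftrightarrow v$, which contribute it twice), and that the $\mathcal O(\sqrt v)$ coupling between the $v$‑ and $\psi$‑equations is genuinely subordinate near $v=0$, so that it can be swallowed into $E$ once $(v,\psi)$ is close enough to the drifting solution; the corresponding delicacy for the instability statement on the non‑diagonal regimes is of the same nature. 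The remaining steps — the Lyapunov/Chetaev inequalities, their integration, and the translation back to $(x,y)$ — are routine repetitions of the arguments in Sections~\ref{sec4} and~\ref{sec5}.
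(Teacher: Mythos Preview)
Your approach is essentially the paper's: both kill the oscillating part of $\lambda_n$ by multiplying $v$ by a factor built from the bounded antiderivative $Z_{n,m}$ (and $X_m$ when $n=2q$), weighted by $t^{(m-n)/2q}$ to match the $\dot v$- and $\dot\psi$-scales. The paper splits into cases --- for $m<n$ it uses the \emph{linearised} corrections $W_{1,2}=v\bigl(1-t^{-(n-m)/2q}(\,\cdot\,)\bigr)$, and for $n\le m$ the full exponential $W_3=v\exp\bigl(t^{(m-n)/2q}(Z_{n,m}^+-Z_{n,m}(\psi))\bigr)$ with the shift $+Z_{n,m}^+$ built in so that $W_3\ge v$ holds pointwise --- whereas you use a single unshifted exponential and absorb the conversion loss $w\leftrightarrow v$ into the final estimate. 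Your ``counting twice'' on $\Gamma_{10}$ is correct and recovers exactly the paper's constant: the $\partial_t$-term in $\dot w/w$ contributes $\tfrac{m-n}{2q}Z_{n,m}^+$ and the bound $v\le w\exp(t^{1-n/2q}Z_{n,m}^+)$ contributes another $\tfrac{m-n}{2q}Z_{n,m}^+$, matching the paper's single term $\tfrac{m-n}{2q}\cdot 2Z_{n,m}^+$ coming from $Z_{n,m}^+-Z_{n,m}\in[0,2Z_{n,m}^+]$.

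One inaccuracy to fix: your claim that on $\Gamma_{11}$ ``$w$ differs from $v$ only by a bounded positive factor'' fails on the sub-case $n<m<2q$, where $t^{(m-n)/2q}\to\infty$ and the ratio $w/v$ is unbounded. This is exactly why the paper introduces the shift in $W_3$. Your conclusion still survives, because on that sub-case $t^{(m-n)/2q}=o\bigl(t^{1-n/2q}\bigr)$ and the exponential decay of $w$ dominates the unbounded conversion factor; but you should say this explicitly rather than invoking a bounded ratio.
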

\begin{proof}
We take $N=n$ in system \eqref{simsys}. Consider first the case when $m<n\leq 2q$. Let $\widehat \gamma_{n,m}>0$  and $\widetilde \lambda_n(v,\psi)\geq 0$ for all $v\in [0,\Delta_0]$ and $\psi\in\mathbb R$. We use
\begin{gather*}
    W_1(v,\psi,t)=v-t^{-\frac{n-m}{2q}} v \Big( Z^+_{n,m}+{\hbox{\rm sgn}}(\omega_m(\psi))Z_{n,m}(\psi)\Big)
\end{gather*}
as a Chetaev function candidate for system \eqref{simsys1}. It can easily be checked that
\begin{gather*}
\frac{dW_1}{dt}\Big|_{\eqref{simsys1}}=t^{-\frac{n}{2q}}
v\Big(\widehat\gamma_{n,m}|\omega_m(\psi)|+\widetilde \lambda_n(v,\psi)+\delta_{n,2q}\frac lq  +\mathcal O(t^{-\frac{1}{2q}})\Big)
\end{gather*}
as $t\to\infty$ for all $v\in[0,\Delta_0]$, $\psi\in\mathbb R$. Hence, for all $\epsilon\in (0,1)$ there exists $t_1\geq t_0$ such that
$v/2\leq W_1(v,\psi,t)\leq v$ and
\begin{eqnarray}
\label{wineq2}
\frac{dW_1}{dt}\Big|_{\eqref{simsys1}}&\geq &  t^{-\frac{n}{2q}}\Big( (1-\epsilon) \widehat\gamma_{n,m} \omega_m^-+\delta_{n,2q}\frac lq\Big)W_1 >0
\end{eqnarray}
 for all $v\in(0,\Delta_0]$, $t\geq t_1$  and $\psi\in\mathbb R$.  Integrating \eqref{wineq2} over $[t_1,t]$, we obtain
\begin{align*}
 v(t) \geq  W_1(v(t),\psi(t),t)
        \geq
             v(t_1)\exp \Big((1-\epsilon) \widehat\gamma_{n,m} \omega_m^- \frac{ 2q }{2q-n} t^{1-\frac{n}{2q}}\Big), \quad  m<n< 2q,\\
 v(t) \geq W_1(v(t),\psi(t),t)
        \geq
             v(t_1)  \Big(\frac{t}{t_1}\Big)^{(1-\epsilon)\widehat\gamma_{n,m}\omega_m^-+l/q}, \quad m<n=2q.
\end{align*}
 These inequalities justify the instability of the solution $v(t)\equiv 0$ for all $\psi\in\mathbb R$.

Let $\widehat\gamma_{n,m}+\delta_{n,2q} \widehat\chi_m l/q<0$.
Consider
\begin{gather*}
    W_2(v,\psi,t)=v-t^{-\frac{n-m}{2q}} v \Big( Z^+_{n,m}+X^+_m+{\hbox{\rm sgn}}(\omega_m(\psi)) \big(Z_{n,m}(\psi)+X_m(\psi)\big)\Big)
\end{gather*}
as a Lyapunov function candidate for system \eqref{simsys1}.
The total derivative of $W_2(v,\psi,t)$ with  respect to $t$ satisfies:
\begin{gather*}
\frac{dW_2}{dt}\Big|_{\eqref{simsys1}}=t^{-\frac{n}{2q}}|\omega_m(\psi)|
v\Big(\widehat\gamma_{n,m}+\delta_{n,2q}\widehat\chi_m\frac lq +\mathcal O(v^{\frac{1}{2}})+\mathcal O(t^{-\frac{1}{2q}})\Big)
\end{gather*}
as $v\to 0$, $t\to\infty$ for all $\psi\in\mathbb R$. Hence, for all $\epsilon\in (0,1)$ there exist $0<\Delta_2\leq \Delta_0$ and $t_2\geq t_0$ such that
$v/2\leq W_2(v,\psi,t)\leq v$ and
\begin{eqnarray}
\label{wineq1}
\frac{dW_2}{dt}\Big|_{\eqref{simsys1}}&\leq &  t^{-\frac{n}{2q}} (1-\epsilon)  \omega_m^-  \Big(\widehat\gamma_{n,m}+\delta_{n,2q}\widehat\chi_m \frac lq\Big)W_2 <0
\end{eqnarray}
 for all $v\in(0,\Delta_2]$, $t\geq t_2$ and $\psi\in\mathbb R$.
Integrating \eqref{wineq1} over $[t_2,t]$ yields
\begin{align*}
 v(t) \leq 2 W_2(v(t),\psi(t),t)  \leq C_2 \exp \Big(-(1-\epsilon) \omega_m^-|\widehat\gamma_{n,m}| \frac{ 2q }{2q-n}t^{1-\frac{n}{2q}}\Big), \quad m<  n<2q,\\
 v(t) \leq 2 W_2(v(t),\psi(t),t)  \leq C_2 \exp \Big(-(1-\epsilon)\omega_m^-\Big|\widehat\gamma_{n,m}+\widehat\chi_m\frac{l}{q}\Big| \log t\Big), \quad m<n=2q,
\end{align*}
where  $C_2>0$ is a constant depending on $v(t_2)$ and $t_2$. Hence, the solution $v(t)\equiv 0$ is exponentially stable for all $\psi\in\mathbb R$ if $m<n < 2q$ and $\widehat\gamma_{n,m}<0$. If $m<n = 2q$ and $\widehat\gamma_{n,m} + \widehat\chi_m l/q<0$, the solution is polynomially stable.

Now, let $n\leq m\leq 2q$. In this case, we use
$
W_3(v,\psi,t)=v\exp ( t^{ {(m-n)}/{2q}}  (Z_{n,m}^+-Z_{n,m}(\psi) ) )
$
as a Lyapunov function candidate.  Since $0\leq Z^+_{n,m}-Z_{n,m}(\psi)\leq 2Z^+_{n,m}$, we have $W_3(v,\psi,t)\geq v$ for all $v>0$,  $t\geq t_0$ and $\psi\in\mathbb R$.
The total derivative of $W_3(v,\psi,t)$ with respect to $t$ is given by
\begin{gather*}
\frac{dW_3}{dt}\Big|_{\eqref{simsys1}}=t^{-\frac{n}{2q}} W_3
 \Big(\widehat\gamma_{n,m}|\omega_m(\psi)|+\frac{m-n}{2q} \big(Z_{n,m}^+-Z_{n,m}(\psi)\big) t^{-1+\frac{m}{2q}} +\widetilde \lambda_n(v,\psi)+\mathcal O(t^{-\frac{1}{2q}})\Big)
\end{gather*}
as $v\to 0$, $t\to\infty$ for all $\psi\in\mathbb R$.
If $\widehat\gamma_{n,m}^\ast:=\widehat\gamma_{n,m}  \omega_m^-+\delta_{m,2q}Z_{m,n}^+(m-n)/q<0$, then for all $\epsilon\in (0,1)$ there exist $0<\Delta_3\leq \Delta_0$ and $t_3\geq t_0$ such that
\begin{eqnarray*}
\frac{dW_3}{dt}\Big|_{\eqref{simsys1}}&\leq & - t^{-\frac{n}{2q}}  (1-\epsilon) |\widehat\gamma_{n,m}^\ast|W_3 <0
\end{eqnarray*}
for all $v\in(0,\Delta_3]$, $t\geq t_3$ and $\psi\in\mathbb R$.
Arguing as above, we see that the solution $v(t)\equiv 0$ is exponentially stable if $\widehat\gamma_{n,m}<0$ and $n\leq m<2q$ or $n=m=2q$. If $n<m=2q$, the condition  $\widehat\gamma_{n,m}\omega_m^-+\delta_{m,2q}Z_{n,m}^+(m-n)/q<0$ guarantees the exponential stability. Similarly, if $\widehat\gamma_{n,m}>0$ and $\widetilde \lambda_n(v,\psi)\geq 0$ for all $v\in [0,\Delta_0]$ and $\psi\in\mathbb R$, we have
\begin{eqnarray*}
\frac{dW_3}{dt}\Big|_{\eqref{simsys1}}&\geq &  t^{-\frac{n}{2q}}   (1-\epsilon) \widehat\gamma_{n,m}\omega_m^- W_3 >0
\end{eqnarray*}
for all $v\in (0, \Delta_0]$, $t\geq t_3$ and $\psi\in\mathbb R$. In this case, the solution $v(t)\equiv 0$ is unstable for all $\psi\in\mathbb R$.

Taking into account  \eqref{vest1}, \eqref{estVE} and \eqref{epsdel}, we obtain the corresponding propositions on the stability of the equilibrium $(0,0)$ in system \eqref{FulSys}.
\end{proof}

\begin{Rem}\label{Rem4}
It follows from the proof of Theorem~\ref{Th5} that if $(n,m)\in\Gamma_n$, $\widehat \gamma_{n,m} + |\omega_m(\psi)|^{-1}l/q>0$ for all $\psi\in\mathbb R$ and $\omega(E)\not\equiv {\hbox{\rm const}}$ $(l\neq 0)$, then the equilibrium $(0,0)$ is unstable with the weight $t^{l/2q}$.
\end{Rem}

Now, consider the case when assumption \eqref{as2} holds. Recall that $\nu=d/(q (\sigma-1))>0$.

\begin{Th}\label{Th6}
Let system \eqref{FulSys} satisfy \eqref{H0}, \eqref{Has}, \eqref{Fas}, \eqref{pcond}, and  $2\leq n,m\leq 2q$, $\sigma\geq 2$, $d\geq 1$ be integers such that  assumptions \eqref{as02}, \eqref{as2} hold.
\begin{itemize}
  \item If $n+d<2q$, $\lambda_{n+d}(\psi)<0$ for all $\psi\in\mathbb R$, then the equilibrium $(0,0)$ is exponentially stable.
  \item  If $n+d=2q$ and either $\lambda_{n+d}(\psi)+\nu+l/q<0$  or $\lambda_{n+d}(\psi)+ l/q<0$, $\lambda_{n,\sigma}(\psi)<0$ for all $\psi\in\mathbb R$, then the equilibrium $(0,0)$ is polynomially stable.
  \item If $n+d\leq 2q$, $\lambda_{n+d}(\psi)>0$, $\lambda_{n,\sigma}(\psi)>0$, $\widetilde \lambda_{n+d}(v,\psi)\geq 0$, $\widetilde \lambda_{n,\sigma}(v,\psi)\geq 0$ for all $v\in[0,\Delta_0]$, $\psi\in\mathbb R$, then the equilibrium $(0,0)$ is unstable.
\end{itemize}
 \end{Th}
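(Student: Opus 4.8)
The plan is to follow the scheme of Theorem~\ref{Th2}, using that in the phase--drifting regime every hypothesis of the theorem is imposed for all $\psi$, so that the Lyapunov and Chetaev functions may be chosen to depend on the amplitude alone. First I would take $N=n+d$ in \eqref{VF}, so that \eqref{simsys} becomes the system \eqref{simsys2}. By the second lemma of Section~\ref{sec3} one has $|\psi(t)|\to\infty$, but this is used only indirectly: the point is that, by the $2\pi$-periodicity and continuity of the coefficients in \eqref{as2}, the sign conditions of the theorem become \emph{uniform} estimates such as $\lambda_{n+d}(\psi)\le-\mu_{n+d}<0$ (or $\ge\mu_{n+d}>0$), $\lambda_{n,\sigma}(\psi)\le-\mu_\sigma<0$ (or $\ge\mu_\sigma>0$), and $\min_{\psi}|\lambda_{n+d}(\psi)+\delta_{n+d,2q}l/q|>0$, holding for all $\psi\in\mathbb R$. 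Two further reductions are used repeatedly: the intermediate terms $\Lambda_z$, $n<z<n+d$, are of order $t^{-z/2q}v^{(\sigma+1)/2}$ and, since $z>n$, are absorbed into an $O(t^{-1/2q})$ correction of the $\Lambda_n$-term; and $\tilde\Lambda_{n+d}(v,\psi,t)=O(t^{-(n+d+1)/2q}v)$ is negligible against $t^{-(n+d)/2q}v$.

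For the first bullet and the first alternative of the second bullet, i.e. when $\lambda_{n+d}(\psi)+\delta_{n+d,2q}(\nu+l/q)<0$ for all $\psi$, I would apply the rescaling $v(t)=t^{-\nu}[\xi(t)]^2$ with $\nu=d/(q(\sigma-1))$, as in \eqref{xesys}, leaving $\psi$ unchanged. A direct computation shows that this $\nu$ makes the $v^{(\sigma+1)/2}$-contributions, the $v$-contribution, and the $\nu t^{-1}\xi$ term generated by the explicit time dependence of the scaling all line up at the common order $t^{-(n+d)/2q}$, producing $\dot\xi=\tfrac12\,t^{-(n+d)/2q}\,\xi\bigl(\lambda_{n+d}(\psi)+\delta_{n+d,2q}(\nu+l/q)+O(\xi^{\sigma-1})+O(t^{-\varsigma})\bigr)$ uniformly in $\psi$, for some $\varsigma>0$; in particular the shift $\nu$ enters the critical coefficient only when $n+d=2q$. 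With $W=\xi^2$ one obtains $dW/dt\le-\tfrac12\mu^\ast t^{-(n+d)/2q}W$ on a set $\{0\le\xi\le\xi_1,\ t\ge t_1\}$, where $\mu^\ast=\min_\psi|\lambda_{n+d}(\psi)+\delta_{n+d,2q}(\nu+l/q)|$. Integrating this inequality gives $\xi(t)\to0$ like $\exp(-ct^{1-(n+d)/2q})$ if $n+d<2q$ and like $t^{-c}$ if $n+d=2q$, hence $v(t)=t^{-\nu}\xi^2(t)\le v(t_1)$ for all $t\ge t_1$; \eqref{exch1}, \eqref{epsdel}, \eqref{estVE} then convert this into exponential, respectively polynomial, stability of $(0,0)$.

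For the second alternative of the second bullet ($n+d=2q$, $\lambda_{n+d}(\psi)+l/q<0$ and $\lambda_{n,\sigma}(\psi)<0$ for all $\psi$) I would \emph{not} rescale: here the hypothesis $\lambda_{n,\sigma}(\psi)<0$ is exactly what keeps the positive higher--order term from taking over at large $t$, and both principal contributions to $\dot v$ are already negative, $\dot v\le-\tfrac12\mu_\sigma t^{-n/2q}v^{(\sigma+1)/2}-\tfrac12\mu_{n+d}t^{-1}v+O(t^{-1-1/2q}v)$ for small $v$, large $t$ and all $\psi$. Then $W=v$ satisfies $dW/dt\le-\tfrac14\mu_{n+d}t^{-1}W$, so $v(t)\le v(t_1)(t/t_1)^{-c}\to0$ and $(0,0)$ is polynomially stable. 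For the instability bullet I would take $J(v)=v$ as a Chetaev function: under $\lambda_{n+d}(\psi)>0$, $\lambda_{n,\sigma}(\psi)>0$, $\widetilde\lambda_{n+d}\ge0$, $\widetilde\lambda_{n,\sigma}\ge0$, the $v^{(\sigma+1)/2}$-part of $\dot v$ is non-negative for large $t$ and the $\Lambda_{n+d}$-part is bounded below by a positive multiple of $t^{-(n+d)/2q}v$, so $\dot v\ge(1-\epsilon)(\mu_{n+d}+\delta_{n+d,2q}l/q)t^{-(n+d)/2q}v>0$ and $v$ reaches $\Delta_0$ in finite time. The return to $(x,y)$ follows the instability arguments of Theorems~\ref{Th1} and~\ref{Th4}: when $n+d=2q$ the summand $\delta_{n+d,2q}l/q$ inside $\Lambda_{n+d}$ cancels the factor $t^{-l/q}$ in \eqref{epsdel}, so $E(t)/E(t_1)\ge(t/t_1)^{(1-\epsilon)\mu_{n+d}-\epsilon l/q}\to\infty$, while for $n+d<2q$ the super-polynomial growth of $v$ dominates $t^{l/q}$; in both cases $(0,0)$ is unstable.

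The main difficulty is the bookkeeping in the rescaled $\xi$-equation: one must verify that, after $v=t^{-\nu}[\xi]^2$, the $v^{(\sigma+1)/2}$-terms, the $v$-term, the $\nu t^{-1}\xi$ term, and the remainder $\tilde\Lambda_{n+d}$ all align at order $t^{-(n+d)/2q}$ — so that the threshold shift $\nu$ appears precisely when $n+d=2q$ and disappears when $n+d<2q$ — and that all error terms are uniform in the drifting phase $\psi$. Once this reduction is in place, the Lyapunov and Chetaev estimates are entirely parallel to those of Theorems~\ref{Th2} and~\ref{Th4}.
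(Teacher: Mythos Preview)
Your proposal is correct and follows essentially the same approach as the paper's proof: the rescaling $v=t^{-\nu}R^2$ (your $\xi$), the direct differential inequality for $v$ in the second alternative of bullet~2 and in the instability case, and the return to $(x,y)$ via \eqref{epsdel}, \eqref{estVE} are all exactly what the paper does. The only cosmetic difference is that after the rescaling the paper observes that the $R$-equation has the same form as the $v$-equation in Theorem~\ref{Th4} (with $n$ replaced by $n+d$ and $l/q$ by $\nu+l/q$) and invokes that proof, whereas you redo the Lyapunov estimate $W=\xi^2$ directly; the content is identical.
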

\begin{proof}
We take $N=n+d$  in system \eqref{simsys2}. By substituting $v(t)=t^{-\nu} [R(t)]^2$, we obtain
\begin{gather}
\label{rpsys}
    \frac{dR}{dt}=t^{-\frac{n+d}{2q}} A_{n+d}(R,\psi)+\tilde A_{n+d}(R,\psi,t), \quad \frac{d\psi}{dt}=t^{-\frac{m}{2q}} \omega_m(\psi)+\tilde B_{m}(R,\psi,t),
\end{gather}
where
$A_{n+d}(R,\psi)= R (\lambda_{n+d}(\psi)+\delta_{n+d,2q}  (\nu+ {l}/{q} )  + \lambda_{n,\sigma}(\psi) R^{\sigma-1} )/2, $
and
\begin{gather*}
    |\tilde A_{n+d}(R,\psi,t)|\leq K t^{-\frac{n+d}{2q}} R \big(t^{-\frac{1}{2q}} + t^{-\frac{\nu}{2}}\big),
        \quad
    |\tilde B_{m}(R,\psi,t)|\leq K t^{-\frac{m}{2q}}   R  \big(t^{-\frac{1}{2q}} + t^{-\frac{\nu}{2}}\big)
\end{gather*}
for all $ R \in[0, R_\ast]$, $t\geq t_\ast$, $\psi\in\mathbb R$ with some constants $R_\ast>0$, $t_\ast\geq t_0$, and $K>0$.
Note that the right-hand sides of system \eqref{rpsys} have the same form as that of \eqref{simsys1} with $n$ and $l/q$ replaced by $n+d$ and $\nu+l/q$, correspondingly. Therefore, by repeating the proof of Theorem~\ref{Th4}, it can be shown that the equilibrium $(0,0)$ of system \eqref{FulSys} is exponentially stable if $n+d<2q$, $\lambda_{n+d}(\psi)<0$  for all $\psi\in\mathbb R$ and polynomially stable if $n+d=2q$, $\lambda_{n+d}(\psi)+\nu+l/q<0$ for all $\psi\in\mathbb R$.

Let $\lambda_{n+d}(\psi)+\delta_{n+d,2q}l/q<0$ and $\lambda_{n,\sigma}(\psi)<0$ for all $\psi\in\mathbb R$. It can easily be checked that there exist $0<\Delta_1\leq \Delta_0$ and $t_1\geq t_0$ such that for all $v\in (0,\Delta_1]$, $t\geq t_1$ and $\psi\in\mathbb R$ the following inequalities hold:
\begin{align}
\label{ineqnd1}
    \frac{dv}{dt}\leq
         \displaystyle  -t^{-\frac{n+d}{2q}} \frac{ v}{2} \Big(\mu_{n,\sigma} v^{\frac{\sigma-1}{2}} t^{\frac{d}{2q}}+\mu_{n+d}^\ast  \Big)\leq  -t^{-\frac{n+d}{2q}}  \mu_{n+d}^\ast \frac{v}{2} < 0,
\end{align}
with $\mu_{n,\sigma}:=\min_{\psi\in\mathbb R}|\lambda_{n,\sigma}(\psi)|>0$ and $ \mu^\ast_{n+d}:=\min_{\psi\in\mathbb R}|\lambda_{n+d}(\psi)+\delta_{n+d,2q}{l}/{q}|>0$.
Integrating \eqref{ineqnd1} over $[t_1,t]$, we get
\begin{align*}
0\leq v(t)\leq C_1 \exp \Big(-  \frac{ q \mu_{n+d}^\ast }{2q-n-d} t^{1-\frac{n+d}{2q}}\Big), \quad n+d< 2q,\\
0\leq v(t)\leq C_1 t^{-\frac{\mu_{n+d}^\ast}{2}}, \quad n+d=2q,
\end{align*}
where $C_1>0$ is a positive constant depending on $v(t_1)$ and $t_1$.
Hence, the solution $v(t)\equiv 0$ of system \eqref{simsys2} is stable for all $\psi\in\mathbb R$. Moreover, the stability is exponential if $n +d< 2q$ and polynomial if $n+d = 2q$.

Let $\lambda_{n+d}(\psi)>0$,  $\lambda_{n,\sigma}(\psi)>0$, $\widetilde \lambda_{n+d}(v,\psi)\geq 0$, $\widetilde \lambda_{n,\sigma}(v,\psi)\geq 0$ for all $v\in[0,\Delta_0]$, $\psi\in\mathbb R$. Then there exists $t_2\geq t_0$ such that
\begin{align*}
    \frac{dv}{dt}\geq   \displaystyle t^{-\frac{n+d}{2q}} \frac{v}{2}\Big( \mu_{n,\sigma} v^{\frac{\sigma-1}{2}} t^{\frac{d}{2q}}+   \mu_{n+d}+2\delta_{n+d,2q}\frac{l}{q}\Big)\geq  t^{-\frac{n+d}{2q}}\Big( \frac{\mu_{n+d}}{2}+\delta_{n+d,2q}\frac{l}{q}\Big) v>0
\end{align*}
for all $v\in (0,\Delta_0]$, $t\geq t_2$ and $\psi\in\mathbb R$, with
$\mu_{n+d}:=\min_{\psi\in\mathbb R} \lambda_{n+d}(\psi) $.
Integrating the last inequality, we get the following estimates as $t\geq t_2$:
\begin{align*}
  v(t)\geq  v(t_2) \exp \left(\mu_{n+
d} \Big(t^{1-\frac{n+d}{2q}}-t_2^{1-\frac{n+d}{2q}}\Big)\frac{ q }{2q-n-d} \right), \quad n+d< 2q,\\
  v(t)\geq v(t_2) \Big(\frac{t}{t_2}\Big)^{\frac{\mu_{n+d}}{2}+\frac{l}{q}}, \quad  n+d=2q.
\end{align*}
Hence, for all $\delta\in (0,\Delta_0)$ the solution $v(t)$ with initial data $v(t_2)=\delta$ exceeds the value $\Delta_0$ at some $t_2^\ast>t_2$.
Taking into account \eqref{vest1}, \eqref{estVE} and \eqref{epsdel}, we obtain instability of the equilibrium $(0,0)$ in system \eqref{FulSys}.
\end{proof}

Similarly, by repeating the proof of Theorem~\ref{Th5} for system \eqref{rpsys}, we obtain the following:
\begin{Th}\label{Th7}
Let system \eqref{FulSys} satisfy \eqref{H0}, \eqref{Has}, \eqref{Fas}, \eqref{pcond}, and  $2\leq n,m\leq 2q$, $\sigma \geq 2$, $d\geq 1$ be integers such that $n+d\leq 2q$ and assumptions \eqref{as02}, \eqref{as2} hold.
\begin{itemize}
  \item If either $\widehat\gamma_{n+d,m}<0$,  $(n+d,m)\in\Gamma_{11}$ or $\displaystyle\widehat\gamma_{n+d,m}+Z_{n+d,m}^+ (m-n-d)/(q \omega_m^-)<0$, $(n+d,m)\in\Gamma_{10}$, then the equilibrium $(0,0)$ is exponentially stable.
  \item If $\displaystyle\widehat\gamma_{n+d,m}+\widehat\chi_m (\nu+l/q)<0$ and $(n+d,m)\in\Gamma_{01}$, then  the equilibrium $(0,0)$ is polynomially stable.
\end{itemize}
\end{Th}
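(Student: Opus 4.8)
The plan is to reduce Theorem~\ref{Th7} to the already established Theorem~\ref{Th5} by means of the substitution introduced in the proof of Theorem~\ref{Th6}. Recall that, taking $N=n+d$ in \eqref{simsys2} and setting $v(t)=t^{-\nu}[R(t)]^2$, one arrives at system \eqref{rpsys}, in which the $R$-equation has leading part $t^{-(n+d)/2q}A_{n+d}(R,\psi)$ with $A_{n+d}(R,\psi)=\tfrac{R}{2}\big(\lambda_{n+d}(\psi)+\delta_{n+d,2q}(\nu+l/q)+\lambda_{n,\sigma}(\psi)R^{\sigma-1}\big)$, and the $\psi$-equation has leading part $t^{-m/2q}\omega_m(\psi)$. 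This is precisely the structure of \eqref{simsys1} with $n$ and $l/q$ replaced by $n+d$ and $\nu+l/q$, except for the additional term $\lambda_{n,\sigma}(\psi)R^{\sigma-1}=\mathcal O(R)$ as $R\to0$ (since $\sigma\geq2$) in $A_{n+d}$ and the additional $\mathcal O(t^{-\nu/2})$ contribution in the remainders $\tilde A_{n+d}$, $\tilde B_m$; both are higher order and play exactly the roles of $\widetilde\lambda_n(v,\psi)$ and of the $\mathcal O(t^{-1/2q})$ remainder in the proof of Theorem~\ref{Th5}.

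The next step is to carry over, verbatim up to these replacements, the Lyapunov constructions of Theorem~\ref{Th5}, now built from $\gamma_{n+d,m}(\psi)=\lambda_{n+d}(\psi)/|\omega_m(\psi)|$, $\widehat\gamma_{n+d,m}$, $Z_{n+d,m}(\psi)$, $X_m(\psi)$, $\widehat\chi_m$. When $m\geq n+d$ I would use $W(v,\psi,t)=v\exp\big(t^{(m-n-d)/2q}(Z_{n+d,m}^+-Z_{n+d,m}(\psi))\big)$; its derivative along \eqref{rpsys} equals $t^{-(n+d)/2q}W\big(\widehat\gamma_{n+d,m}|\omega_m(\psi)|+\tfrac{m-n-d}{2q}(Z_{n+d,m}^+-Z_{n+d,m}(\psi))t^{-1+m/2q}+\mathcal O(R)+\mathcal O(t^{-1/2q})\big)$, the oscillatory part $\widetilde\gamma_{n+d,m}(\psi)$ being cancelled against the fast rotation $\dot\psi\approx t^{-m/2q}\omega_m(\psi)$. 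For $(n+d,m)\in\Gamma_{11}$ with $m\geq n+d$ the middle term is $o(1)$ (if $m<2q$) or $\equiv0$ (if $n+d=m=2q$), so $\widehat\gamma_{n+d,m}<0$ makes the bracket negative; for $(n+d,m)\in\Gamma_{10}$ it contributes at most $Z_{n+d,m}^+(m-n-d)/q$, and the hypothesis $\widehat\gamma_{n+d,m}\omega_m^-+Z_{n+d,m}^+(m-n-d)/q<0$ is what keeps it negative. When $m<n+d$ I would use instead $W(v,\psi,t)=v-t^{-(n+d-m)/2q}v\big(Z_{n+d,m}^++X_m^++\mathrm{sgn}(\omega_m(\psi))(Z_{n+d,m}(\psi)+X_m(\psi))\big)$, for which $v/2\leq W\leq v$ and $\dot W=t^{-(n+d)/2q}|\omega_m(\psi)|v\big(\widehat\gamma_{n+d,m}+\delta_{n+d,2q}\widehat\chi_m(\nu+l/q)+\mathcal O(\sqrt v)+\mathcal O(t^{-1/2q})\big)$; here the auxiliary function $X_m$ absorbs the constant $\nu+l/q$ occurring when $n+d=2q$, and the bracket is negative under $\widehat\gamma_{n+d,m}<0$ (case $m<n+d<2q$, i.e. $(n+d,m)\in\Gamma_{11}$) or under $\widehat\gamma_{n+d,m}+\widehat\chi_m(\nu+l/q)<0$ (case $n+d=2q$, i.e. $(n+d,m)\in\Gamma_{01}$).

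Finally, integrating the resulting differential inequalities over $[t_1,t]$ exactly as in Theorem~\ref{Th5} gives $v(t)=\mathcal O\big(\exp(-ct^{1-(n+d)/2q})\big)$ when $n+d<2q$ and $v(t)=\mathcal O(t^{-c})$ when $n+d=2q$, with $c>0$; since $v=t^{-\nu}R^2$, $\mathcal E(t)=v(t)(1+o(1))$ by \eqref{estVE} and $E(t)=t^{-l/q}\mathcal E(t)$, these estimates transfer through \eqref{exch1} and \eqref{epsdel} to exponential, resp. polynomial, stability of $(0,0)$ in \eqref{FulSys}. I expect the only genuine obstacle to be the routine but delicate verification that the nonlinear term $\lambda_{n,\sigma}(\psi)R^{\sigma-1}$ and the $\mathcal O(t^{-\nu/2})$ part of the remainders never overturn the sign of $\dot W$; this is dealt with, as in Theorem~\ref{Th5}, by restricting to a small enough neighbourhood $v\in[0,\Delta_1]$ and large enough $t\geq t_1$, the sole extra bookkeeping being the $l/q\mapsto\nu+l/q$ shift in the two boundary cases $n+d=2q$.
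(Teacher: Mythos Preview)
Your proposal is correct and matches the paper's own argument essentially verbatim: the paper's proof of Theorem~\ref{Th7} consists of the single sentence ``by repeating the proof of Theorem~\ref{Th5} for system \eqref{rpsys}'', which is precisely the reduction you carry out via the substitution $v=t^{-\nu}R^2$ from the proof of Theorem~\ref{Th6}, followed by transplanting the Lyapunov functions $W_2$, $W_3$ of Theorem~\ref{Th5} with $n\mapsto n+d$ and $l/q\mapsto \nu+l/q$. One small notational point: in your second paragraph you write the Lyapunov functions as $W(v,\psi,t)$ while computing their derivatives along \eqref{rpsys}; to avoid confusion with the original variable $v$ of \eqref{simsys2}, it is cleaner to write them in the variable $R^2$ (or introduce $\mathcal V:=R^2$), since it is $R^2$, not $v=t^{-\nu}R^2$, that plays the role of the old $v$ in the Theorem~\ref{Th5} template.
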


Note that Theorem~\ref{Th6} and~\ref{Th7} do not give any information about the stability if $\widehat\gamma_{n+d,m}>0$, $\widehat\gamma_{n,\sigma,m}:=\langle\gamma_{n,\sigma}(\psi)|\omega_m(\psi)|^{-1}\rangle_\psi<0$ or $\lambda_{n+d}(\psi)>0$,  $\lambda_{n,\sigma}(\psi)<0$.  It turns out that in these cases polynomial stability can take place.

\begin{Th}\label{Th8}
Let system \eqref{FulSys} satisfy \eqref{H0}, \eqref{Has}, \eqref{Fas}, \eqref{pcond}, and  $2\leq n,m\leq 2q$, $\sigma \geq 2$, $d\geq 1$ be integers such that assumptions \eqref{as02}, \eqref{as2} hold.
If $m<n+d\leq 2q$, $\widehat\gamma_{n+d,m}>0$ and $\widehat\gamma_{n,\sigma,m}<0$, then the equilibrium $(0,0)$ is polynomially stable.
\end{Th}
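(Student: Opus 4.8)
The plan is to reduce \eqref{FulSys}, as in the proof of Theorem~\ref{Th6}, to a scalar amplitude equation whose averaged linear coefficient $\widehat\gamma_{n+d,m}>0$ is destabilizing but whose averaged nonlinear coefficient $\widehat\gamma_{n,\sigma,m}<0$ forces the amplitude into a bounded, $t^{-\nu}$-small regime. First I take $N=n+d$ in \eqref{VF}, so that \eqref{simsys} becomes \eqref{simsys2}, and substitute $v(t)=t^{-\nu}[R(t)]^{2}$, obtaining system \eqref{rpsys}
\[
\frac{dR}{dt}=t^{-\frac{n+d}{2q}}A_{n+d}(R,\psi)+\tilde A_{n+d}(R,\psi,t),\qquad
\frac{d\psi}{dt}=t^{-\frac{m}{2q}}\omega_m(\psi)+\tilde B_{m}(R,\psi,t),
\]
with $A_{n+d}(R,\psi)=\tfrac{R}{2}\big(\lambda_{n+d}(\psi)+\delta_{n+d,2q}(\nu+l/q)+\lambda_{n,\sigma}(\psi)R^{\sigma-1}\big)$ and remainders of size $Kt^{-(n+d)/2q}R(t^{-1/2q}+t^{-\nu/2})$ and $Kt^{-m/2q}R(t^{-1/2q}+t^{-\nu/2})$. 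Without loss of generality $\omega_m(\psi)>0$ (otherwise replace $\psi$ by $-\psi$), so for bounded $R$ one has $\dot\psi=t^{-m/2q}\omega_m(\psi)(1+\mathcal O(t^{-\varsigma}))$, $\varsigma=\min\{1/2q,\nu/2\}>0$, i.e. $\psi$ is a fast monotone variable relative to $R$ and $t$, which licenses averaging over $\psi$.

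I then construct a Chetaev--Lyapunov barrier for \eqref{rpsys} in the spirit of the function $W_3$ from the proof of Theorem~\ref{Th5}, but carrying an additional correction that also neutralizes the oscillation of the nonlinear coefficient:
\[
W(R,\psi,t)=R^{2}\exp\Big(t^{-\frac{n+d-m}{2q}}\big[\big(Z^{+}_{n+d,m}-Z_{n+d,m}(\psi)\big)+R^{\sigma-1}\big(Y^{+}-Y(\psi)\big)\big]\Big),
\]
where $Z_{n+d,m}(\psi)=\int_{0}^{\psi}\widetilde\gamma_{n+d,m}(s)\,ds$ as before, $Y(\psi)=\int_{0}^{\psi}\big(\lambda_{n,\sigma}(s)|\omega_m(s)|^{-1}-\widehat\gamma_{n,\sigma,m}\big)\,ds$, and $Z^{+}_{n+d,m}$, $Y^{+}$ are the corresponding maxima of the moduli. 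The exponent is nonnegative and bounded on bounded $R$-sets for $t\ge t_{0}$, so $R^{2}\le W\le CR^{2}$ there. Differentiating $W$ along \eqref{rpsys} and using $\dot\psi=t^{-m/2q}\omega_m(\psi)(1+\mathcal O(t^{-\varsigma}))$, the $\psi$-derivatives of $Z_{n+d,m}$ and $Y$ cancel the oscillating parts of $\lambda_{n+d}$ and $\lambda_{n,\sigma}$ at order $t^{-(n+d)/2q}$, and I expect
\[
\frac{\dot W}{W}\Big|_{\eqref{rpsys}}=t^{-\frac{n+d}{2q}}\Big(\widehat\gamma_{n+d,m}\,\omega_m(\psi)+\delta_{n+d,2q}(\nu+l/q)+\widehat\gamma_{n,\sigma,m}\,\omega_m(\psi)\,R^{\sigma-1}+\mathcal O\big(t^{-\varsigma}(1+R^{\sigma-1})\big)\Big)
\]
as $t\to\infty$, uniformly on bounded $R$-sets; the $\mathcal O$-term absorbs $\tilde A_{n+d}$, the components $\Lambda_z$ with $n\le z<n+d$, and the tildes $\widetilde\lambda_{n+d}$, $\widetilde\lambda_{n,\sigma}$, each of strictly lower order in $t$ or carrying an extra $t^{-\nu/2}$.

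Since $\widehat\gamma_{n+d,m}>0$, $\widehat\gamma_{n,\sigma,m}<0$ and $\omega_m(\psi)\ge\omega_m^{-}>0$, the bracket tends to $-\infty$ as $R\to\infty$ uniformly in $\psi$; hence there are $R^{*}>0$ and $t^{*}\ge t_{0}$ with $\dot W|_{\eqref{rpsys}}<0$ for all $R\ge R^{*}$, $t\ge t^{*}$, $\psi\in\mathbb R$. As $W$ increases in $R$ and is comparable to $R^{2}$, an invariant-region argument exactly like the one in the proof of Theorem~\ref{Th2} shows that every trajectory of \eqref{rpsys} with $R$ initially bounded keeps $R$ bounded for all $t\ge t^{*}$, so $v(t)=t^{-\nu}[R(t)]^{2}=\mathcal O(t^{-\nu})$. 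Translating this back through \eqref{VF}, \eqref{epsdel}, \eqref{exch1} and \eqref{estVE} yields $E(t)=\mathcal O(t^{-l/q-\nu})$, and, together with the standard observation that small initial data at any time $\tau\ge t^{*}$ keeps $R(\tau)$ (hence $v$) controlled, this gives the polynomial stability of $(0,0)$ in \eqref{FulSys}.

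The step I expect to be the main obstacle is the justification of the displayed asymptotics of $\dot W/W$: one must check that the corrections in $W$ genuinely remove the $\psi$-oscillations at order $t^{-(n+d)/2q}$ when $\psi$ drifts rather than returns periodically, i.e. that replacing $\dot\psi$ by $t^{-m/2q}\omega_m(\psi)$ costs only $\mathcal O(t^{-\varsigma})$ relative to the leading term; that the $R^{\sigma-1}$-weighted correction produces no uncontrolled terms of $R$-degree $\ge\sigma$ at the leading $t$-order and does not spoil the comparison $R^{2}\le W\le CR^{2}$; and that the averaged nonlinear coefficient emerging from the computation is correctly normalized to be exactly $\widehat\gamma_{n,\sigma,m}$. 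A secondary, routine point is the uniformity of the invariant-region argument in the initial time, needed to upgrade boundedness of $R$ to Lyapunov stability rather than mere decay of the equilibrium.
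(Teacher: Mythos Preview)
Your route is viable but genuinely different from the paper's, and one step needs more care than you allow.

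The paper does not build a global barrier. Instead it observes that the averaged amplitude map $\widehat A_{n+d,m}(R)=\langle A_{n+d}(R,\psi)|\omega_m(\psi)|^{-1}\rangle_\psi$ has a positive root $R_\ast$ (formula~\eqref{Rast}) with $\partial_R\widehat A_{n+d,m}(R_\ast)<0$, shifts $R(t)=R_\ast+\zeta(t)+\mathrm{sgn}(\omega_m)\,t^{-(n+d-m)/2q}\int_0^{\psi}\widetilde A_{n+d,m}(R_\ast,s)\,ds$, and then proves stability of $\zeta=0$ with an additively corrected quadratic Lyapunov function $\ell_1=\zeta^2\big(1-2t^{-(n+d-m)/2q}(\Xi(\psi)+\Xi^+)\big)$. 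Because everything is localized near $R_\ast$, the analysis stays linear in $\zeta$ and no higher powers of $R$ arise in the error.

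Your exponential correction $W=R^{2}\exp\big(t^{-(n+d-m)/2q}[(Z^+-Z)+R^{\sigma-1}(Y^+-Y)]\big)$ does average both coefficients at leading $t$-order, but differentiating the factor $R^{\sigma-1}$ through $\dot R$ produces a contribution of size $t^{-(n+d-m)/2q}R^{2\sigma-2}$ relative to the leading bracket, not merely $\mathcal O\big(t^{-\varsigma}(1+R^{\sigma-1})\big)$ as you write. On any fixed bounded $R$-interval this is harmless for large $t$, but your stated conclusion ``$\dot W<0$ for all $R\ge R^{*}$, $t\ge t^{*}$'' does not follow uniformly in $R$; you must first confine $R$ to some interval $[0,R_1]$, then choose $t^{*}=t^{*}(R_1)$, and only then run the sublevel-set argument to trap $R$ below $\sqrt{c_1}\le R_1$. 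This bootstrap is standard but is not ``exactly like the one in the proof of Theorem~\ref{Th2}'': that proof is a local stability argument around a fixed point, which is precisely what the paper does here and what lets it avoid the $R^{2\sigma-2}$ bookkeeping entirely. With this caveat your argument can be completed; the paper's localization at $R_\ast$ simply yields a cleaner path to the same $v(t)=\mathcal O(t^{-\nu})$ conclusion.
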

\begin{proof} Consider system \eqref{rpsys}. Define
$\widehat A_{n+d,m}(R)\equiv \langle   A_{n+d}(R,\psi)|\omega_m(\psi)|^{-1}\rangle_\psi$ and $ \widetilde A_{n+d,m}(R,\psi) \equiv  A_{n+d}(R,\psi) |\omega_{m}(\psi)|^{-1} - \widehat A_{n+d,m}(R)$ for all $R\in [0,R_\ast]$, $\psi\in\mathbb R$.
Note that the equation $\widehat A_{n+d,m}(R)=0$ has a positive root $R=R_\ast$, where
\begin{gather}
\label{Rast}
R_\ast = \left(
                \frac{
                    \widehat\gamma_{n+d,m}+\delta_{n+d,2q}\big(\nu+{l}/{q}\big)
                }{
                    |\widehat\gamma_{n,\sigma,m}|
                    }\right)^{\frac{1}{\sigma-1}}
\end{gather}
such that $a_\ast:=\partial \widehat {A}_{n+d,m}(R_\ast)=-(\sigma-1) \big(\widehat\gamma_{n+d,m}+\delta_{n+d,2q} (\nu+{l}/{q})\big)<0$.
The change of the variable
\begin{gather*}
R(t)=R_\ast+\zeta(t)+{\hbox{\rm sgn}}\big(\omega_m\big(\psi(t)\big)\big) t^{-\frac{n+d-m}{2q}}\int\limits_0^{\psi(t)} \widetilde A_{n+d,m}(R_\ast,s)\, ds
\end{gather*}
transforms \eqref{rpsys} into
\begin{gather}
        \label{zpsys}
            \frac{d\zeta}{dt}=t^{-\frac{n+d}{2q}} \mathcal A_{n+d}(\zeta,\psi)+\tilde {\mathcal A}_{n+d}(\zeta,\psi,t),
        \quad
            \frac{d\psi}{dt}=t^{-\frac{m}{2q}} \omega_m(\psi)+\tilde {\mathcal B}_{m}(\zeta,\psi,t),
\end{gather}
where $\mathcal A_{n+d}(\zeta,\psi) =
        |\omega_m(\psi)|\big (a_\ast+\partial_R \widetilde A_{n+d,m}(R_\ast,\psi) \big) \zeta + \mathcal O(\zeta^2)$ as $\zeta\to 0$ and
\begin{gather*}
|\tilde {\mathcal A}_{n+d}(\zeta,\psi,t) |\leq K_1 t^{-\frac{n+d}{2q}}\Big(t^{-\frac{1}{2q}}+t^{-\frac{\nu}{2}}\Big), \quad |\tilde {\mathcal B}_{m}(\zeta,\psi,t) |\leq K_1 t^{-\frac{m}{2q}}\Big(t^{-\frac{1}{2q}}+t^{-\frac{\nu}{2}}\Big)
\end{gather*}
for all $|\zeta|\leq \zeta_1$, $t\geq t_1$, $\psi\in\mathbb R$ with some constants $\zeta_1>0$, $t_1\geq t_\ast$ and $K_1>0$.
It can easily be checked that the reduced equation $d\zeta/dt = t^{-(n+d)/2q}( |\omega_m(\psi)| a_\ast\zeta+\mathcal O(\zeta^2))$ has a stable trivial solution $\zeta(t)\equiv 0$ for all $\psi\in\mathbb R$.  Let us show that this solution is stable with respect to the persistent perturbations $t^{-(n+d)/2q} |\omega_m(\psi)|\partial_R\widetilde A_{n+d,m}(R_\ast,\psi)\zeta$ and  $\tilde A_{n+d}(R,\zeta,t)$.
Consider $ \ell_1(\zeta,\psi,t)=\zeta^2- t^{-{(n+d-m)}/{2q}} 2 \zeta^2 ( \Xi(\psi)+\Xi^+)$, where
\begin{gather*}
   \Xi(\psi)\equiv {\hbox{\rm sgn}}(\omega_m(\psi))\int\limits_0^\psi \partial_R\widetilde A_{n+d,m}(R_\ast,s)\, ds, \quad
     \Xi^+:=\max_{\psi\in\mathbb R}|\Xi(\psi)|,
\end{gather*}
as a Lyapunov function candidate for the system \eqref{zpsys}. Then there exist $\zeta_2\leq \zeta_1$ and $t_2\geq t_1$ such that for all $\varepsilon\in (0,\zeta_2]$ we have
\begin{gather*}
\frac{\zeta^2}{2}\leq\ell_1(\zeta,\psi,t)\leq \zeta^2, \quad
\frac{d\ell_1}{dt}\Big|_{\eqref{zpsys}}\leq t^{-\frac{n+d}{2q}} \zeta^2  \Big(-|a_\ast|+ 4 K_2 \varepsilon^{-1} t^{-\varsigma}\Big)\leq - t^{-\frac{n+d}{2q}} \frac{|a_\ast| }{2}\ell_1
\end{gather*}
for all $\varepsilon/4<|\zeta|<\varepsilon$, $t\geq \tau_2$ and $\psi\in\mathbb R$, where $\tau_2=t_2+(8K_2\varepsilon^{-1}|a_\ast|^{-1})^{1/\varsigma}$, $\varsigma=\min\{\nu,q^{-1}\}/2$, $K_2={\hbox{\rm const}}>0$. This implies that any solution $\zeta(t)$, $\psi(t)$ of system \eqref{zpsys} with initial data $|\zeta(\tau_2)|\leq
\varepsilon/4$, $\psi(\tau_2)\in\mathbb R$ cannot leave the domain $\{|\zeta|<\varepsilon\}$ for all $t>\tau_2$.
Returning to the variable $v$, we see that the solution $v(t)\equiv 0$ is polynomially stable: $v(t)=\mathcal O(t^{-\nu})$ as $t\to \infty$.
\end{proof}

\begin{Th}
Let system \eqref{FulSys} satisfy \eqref{H0}, \eqref{Has}, \eqref{Fas}, \eqref{pcond}, and  $2\leq n,m\leq 2q$, $\sigma \geq 2$, $d\geq 1$ be integers such that assumptions \eqref{as02}, \eqref{as2} hold.
If $n+d<m$, $\lambda_{n+d}(\psi)>0$ and $\lambda_{n,\sigma}(\psi)<0$ for all $\psi\in\mathbb R$, then  the equilibrium $(0,0)$ is polynomially stable.
\end{Th}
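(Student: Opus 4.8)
The plan is to handle this as the phase-drifting analogue of the last subcase (``$\lambda_{n+d}(\psi_\ast)>0$, $\lambda_{n,\sigma}(\psi_\ast)<0$'') in part~3 of the proof of Theorem~\ref{Th2}; note that it falls outside Theorem~\ref{Th6} (whose unstable branch needs $\lambda_{n,\sigma}(\psi)>0$) and outside Theorem~\ref{Th8} (which requires $m<n+d$). First I would take $N=n+d$ in \eqref{simsys}, producing \eqref{simsys2}. Since $n+d<m\leq 2q$, every Kronecker symbol $\delta_{\cdot,2q}$ appearing in \eqref{as2} vanishes, so the rescaling $v(t)=t^{-\nu}[R(t)]^{2}$ from the proof of Theorem~\ref{Th6} carries \eqref{simsys2} into the form \eqref{rpsys} with $A_{n+d}(R,\psi)=\tfrac12 R\bigl(\lambda_{n+d}(\psi)+\lambda_{n,\sigma}(\psi)R^{\sigma-1}\bigr)$ (the $\delta_{n+d,2q}(\nu+l/q)$ contribution disappears) and with remainders $\tilde A_{n+d}$, $\tilde B_m$ obeying the estimates stated there. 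The purpose of this substitution is to place the destabilizing linear term $\lambda_{n+d}$ and the stabilizing nonlinear term $\lambda_{n,\sigma}$ on the same scale.

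Next I would use the uniform sign conditions. Put $\mu_{n+d}^{+}:=\max_{\psi}\lambda_{n+d}(\psi)$, $\mu_{n+d}:=\min_{\psi}\lambda_{n+d}(\psi)>0$, $\mu_{n,\sigma}:=\min_{\psi}|\lambda_{n,\sigma}(\psi)|>0$ (all finite and positive by $2\pi$-periodicity and continuity) and set $R_{+}:=(2\mu_{n+d}^{+}/\mu_{n,\sigma})^{1/(\sigma-1)}$; enlarging the validity threshold $t_\ast$ if necessary, one may assume $R_{+}\leq R_\ast$. Then $A_{n+d}(R_{+},\psi)\leq -\tfrac12 R_{+}\mu_{n+d}^{+}<0$ for every $\psi\in\mathbb R$. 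Since $\tilde A_{n+d}(R_{+},\psi,t)=\mathcal O(t^{-(n+d)/2q})$ uniformly in $\psi$, there is $t_{1}\geq t_\ast$ with $dR/dt<0$ on $\{R=R_{+}\}$ for all $t\geq t_{1}$ and all $\psi$. A first-exit argument then gives that $\{0\leq R\leq R_{+}\}$ is forward invariant for $t\geq t_{1}$: were $R$ to reach $R_{+}$ first at some $t^{\ast}>t_{1}$, then $dR/dt\geq 0$ there, a contradiction; moreover $v\equiv 0$ being invariant, $R(t)>0$ for $t\geq t_1$ whenever $R(t_1)>0$. Hence every solution with $R(t_{1})<R_{+}$ satisfies $v(t)=t^{-\nu}[R(t)]^{2}<R_{+}^{2}t^{-\nu}\to 0$.

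It remains to upgrade attraction to Lyapunov stability and return to $(x,y)$. If $v(t_{1})$ is small, then $R(t_{1})^{2}=t_{1}^{\nu}v(t_{1})<R_{+}^{2}$, so the previous step applies and $v(t)\leq R_{+}^{2}t^{-\nu}\leq\epsilon$ for $t\geq T_{\epsilon}:=\max\{t_1,(R_{+}^{2}/\epsilon)^{1/\nu}\}$. On the bounded interval $[t_{1},T_{\epsilon}]$ I would use that the leading nonlinear term is stabilizing: since $\lambda_{n,\sigma}(\psi)\leq-\mu_{n,\sigma}<0$ while the intermediate terms $\Lambda_{z}$, $n<z<n+d$, are of the same homogeneity $v^{(\sigma+1)/2}$ but carry extra powers of $t^{-1/2q}$, and $\tilde\Lambda_{n+d}$ vanishes at $v=0$, one obtains $dv/dt\leq C\,t^{-(n+d)/2q}v$ for $t\geq t_{1}$ and $v$ bounded; hence $v(t)\leq v(t_{1})\exp(C'T_{\epsilon}^{1-(n+d)/2q})$ on $[t_{1},T_{\epsilon}]$. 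Choosing $\delta>0$ with $t_{1}^{\nu}\delta<R_{+}^{2}$ and $\delta\exp(C'T_{\epsilon}^{1-(n+d)/2q})<\epsilon$ yields $v(t)<\epsilon$ for all $t\geq t_{1}$, so $v(t)\equiv 0$ is stable and $v(t)=\mathcal O(t^{-\nu})$. Finally \eqref{exch1}, \eqref{epsdel} and \eqref{estVE} give $\mathcal E(t)=\mathcal O(v(t))$, $E(t)=t^{-l/q}\mathcal E(t)=\mathcal O(t^{-l/q-\nu})$ and $r(t)=\mathcal O(t^{-(l/q+\nu)/2})$, so $(0,0)$ is polynomially stable for \eqref{FulSys}.

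The main obstacle is not the invariant-region estimate but the passage from attraction to genuine Lyapunov stability: a solution starting arbitrarily near $v=0$ first grows (the linear term $\lambda_{n+d}>0$ is destabilizing) before being captured near the \emph{moving} quasi-equilibrium $R_\ast(\psi)=(\lambda_{n+d}(\psi)/|\lambda_{n,\sigma}(\psi)|)^{1/(\sigma-1)}\in(0,R_{+})$, so the two-scale bound above --- an $\mathcal O(t^{-\nu})$ ceiling for large $t$ together with a crude Gronwall estimate on a finite window, the latter being available \emph{precisely because} $\lambda_{n,\sigma}<0$ --- must be set up with some care, as must the check that $R_{+}$ lies inside the domain where \eqref{rpsys} and its remainder estimates are valid.
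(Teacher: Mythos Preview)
Your argument is correct and takes a genuinely different route from the paper's own proof. The paper also passes to the rescaled system \eqref{rpsys}, but then makes the \emph{moving-equilibrium} substitution $R(t)=R_\ast(\psi(t))+\zeta(t)$ with $R_\ast(\psi)=(\lambda_{n+d}(\psi)/|\lambda_{n,\sigma}(\psi)|)^{1/(\sigma-1)}$ and uses the Lyapunov function $\ell(\zeta)=\zeta^{2}/2$; the hypothesis $n+d<m$ enters precisely so that the extra term $-R_\ast'(\psi)\,d\psi/dt=\mathcal O(t^{-m/2q})$ generated by the substitution is a higher-order perturbation that can be absorbed into the remainder. This yields the sharper statement that $R(t)$ is trapped near $R_\ast(\psi(t))$, hence $v(t)\sim t^{-\nu}R_\ast(\psi(t))^{2}$.

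Your barrier argument is more elementary: instead of tracking $R_\ast(\psi)$ you dominate it by a $\psi$-independent level $R_{+}$ at which $A_{n+d}(R_{+},\psi)<0$ uniformly, obtain forward invariance of $\{0\le R\le R_{+}\}$, and then handle Lyapunov stability by a finite-time Gronwall estimate that is available exactly because the leading nonlinear term $t^{-n/2q}\lambda_{n,\sigma}(\psi)v^{(\sigma+1)/2}$ is nonpositive and dominates the intermediate $\Lambda_{z}$ for large $t$. This avoids differentiating $R_\ast(\psi)$ and makes the Lyapunov stability of $v\equiv 0$ (not just of the moving equilibrium) explicit, something the paper leaves implicit. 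The price is a cruder asymptotic bound $v(t)\le R_{+}^{2}t^{-\nu}$ instead of convergence to the curve $R_\ast(\psi)$. Two small points worth tightening: the remainder $\tilde A_{n+d}(R_{+},\psi,t)$ is in fact $o(t^{-(n+d)/2q})$, not merely $\mathcal O$, which only helps; and the Gronwall step on $[t_{1},T_{\epsilon}]$ implicitly uses that $v(t)\le t_{1}^{-\nu}R_{+}^{2}$ stays in the domain where your differential inequality $dv/dt\le C\,t^{-(n+d)/2q}v$ is valid, which follows from the forward invariance you already established provided $t_{1}$ is taken large enough.
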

\begin{proof}
Consider system \eqref{rpsys}. It can easily be checked that the equation $A_{n+d}(R,\psi)\equiv 0$ has a nontrivial solution $R_\ast(\psi)\equiv (\lambda_{n+d}(\psi)|\lambda_{n,\sigma}(\psi)|^{-1})^{1/(\sigma-1)}$ such that $A_\ast(\psi)\equiv \partial_R A_{n+d}(R_\ast(\psi),\psi)=-(\sigma-1)\lambda_{n+d}(\psi)/2<0$ for all $\psi\in\mathbb R$. The change of the variable $R(t)=R_\ast(\psi(t))+\zeta(t)$ transforms \eqref{rpsys} into
\begin{gather}
        \label{zpsys2}
\begin{split}
           & \frac{d\zeta}{dt}-t^{-\frac{n+d}{2q}} A_{n+d}(R_\ast(\psi)+\zeta,\psi)=\tilde A_{n+d}(R_\ast(\psi)+\zeta,\psi,t)-\partial R_\ast(\psi)\frac{d\psi}{dt},
        \\
         &   \frac{d\psi}{dt}=t^{-\frac{m}{2q}} \omega_m(\psi)+\tilde B_{m}(R_\ast(\psi)+\zeta,\psi,t).
\end{split}
\end{gather}
Note that the first equation of \eqref{zpsys2} with  the right-hand side replaced by zero has a stable solution $\zeta(t)\equiv 0$ for all $\psi\in\mathbb R$. Let us show that this solution is stable in the perturbed system.
Consider $\ell(\zeta )=\zeta^2/2$ as a Lyapunov function candidate for system  \eqref{zpsys2}. Then there exists $\zeta_1>0$ and $t_1\geq t_\ast$ such that for all $\varepsilon\in (0,\zeta_1]$ we have
$ {d\ell}/{dt}\leq t^{- {(n+d)}/{2q}}  \zeta^2   (-A_\ast^- +  4K_1  \varepsilon^{-1} t^{-\varsigma} )/2 \leq  -t^{- {(n+d)}/{2q}} {A_\ast^-} \ell /2<0$
for all $\varepsilon/4<|\zeta|<\varepsilon$, $\psi\in\mathbb R$ and $t\geq \tau_1$, where $\tau_1=t_1+(8K_1 \varepsilon^{-1}/ A_\ast^-)^{1/\varsigma}$, $A_\ast^-=\min_{\psi\in\mathbb R}|A_\ast(\psi)|$,  $\varsigma=\min\{\nu,q^{-1}\}/2$, $K_1={\hbox{\rm const}}>0$.
Hence, any solution $\zeta(t)$, $\psi(t)$ with initial data $|\zeta(\tau_1)|\leq \varepsilon/4$, $\psi(\tau_1)\in\mathbb R$ cannot leave the domain $\{|\zeta|< \varepsilon\}$ as $t>\tau_1$. Returning to the variables $(v,\psi)$, we see that $v(t)=\mathcal O(t^{-\nu})$ as $t\to \infty$. Thus, the fixed point $(0,0)$ of system \eqref{FulSys} is polynomially stable.
\end{proof}

\begin{Th}
Let system \eqref{FulSys} satisfy \eqref{H0}, \eqref{Has}, \eqref{Fas}, \eqref{pcond}, and  $2\leq n,m\leq 2q$, $\sigma \geq 2$, $d\geq 1$ be integers such that assumptions \eqref{as02}, \eqref{as2} hold.
If  $\omega(E)\not\equiv {\hbox{\rm const}}$, $n+d>2q$, $\widehat\gamma_{n,\sigma,m}<0$, then the equilibrium $(0,0)$ is polynomially stable.
\end{Th}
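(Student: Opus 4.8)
The plan is to follow the scheme of Theorem~\ref{Th8}, with the truncation level (now $n+d>2q$) replaced by $2q$ and the exponent $\nu$ replaced by $\mu:=(2q-n)/\bigl(q(\sigma-1)\bigr)\geq 0$. First I would take $N=2q$ in \eqref{VF}, so that \eqref{simsys} becomes
\begin{gather*}
\frac{dv}{dt}=\sum_{k=n}^{2q}t^{-\frac{k}{2q}}\Lambda_k(v,\psi)+\tilde\Lambda_{2q}(v,\psi,t),\qquad
\frac{d\psi}{dt}=t^{-\frac{m}{2q}}\Omega_m(v,\psi)+\tilde\Omega_m(v,\psi,t).
\end{gather*}
By \eqref{as2} the first equation has leading nonlinearity $\Lambda_n(v,\psi)=v^{(\sigma+1)/2}\bigl(\lambda_{n,\sigma}(\psi)+\widetilde\lambda_{n,\sigma}(v,\psi)\bigr)$, while $\Lambda_{2q}(v,\psi)=lv/q+\mathcal O(v^{(\sigma+1)/2})$ with $l\geq 1$ since $\omega(E)\not\equiv{\hbox{\rm const}}$. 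The substitution $v(t)=t^{-\mu}[R(t)]^2$ (the same choice as in the last theorem of Section~\ref{sec4}) makes the contribution of $\Lambda_n$ balance the linear term $lv/q$ of $\Lambda_{2q}$ and turns the system into
\begin{gather*}
\frac{dR}{dt}=t^{-1}A(R,\psi)+\tilde A(R,\psi,t),\qquad
\frac{d\psi}{dt}=t^{-\frac{m}{2q}}\omega_m(\psi)+\tilde B_m(R,\psi,t),
\end{gather*}
with $A(R,\psi)=\tfrac12\bigl((\mu+l/q)R+\lambda_{n,\sigma}(\psi)R^{\sigma}\bigr)$ and remainders bounded by $Kt^{-1}R(t^{-1/2q}+t^{-\mu/2})$ and $Kt^{-m/2q}R(t^{-1/2q}+t^{-\mu/2})$ on a set $\{0\leq R\leq R_0,\ t\geq t_\ast\}$; here $\omega_m(\psi)=\Omega_m(0,\psi)$ is $2\pi$-periodic and, by \eqref{as02}, of one sign.

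Next I would average the $R$-equation over $\psi$: put $\widehat A_m(R):=\langle A(R,\psi)|\omega_m(\psi)|^{-1}\rangle_\psi=\tfrac12\bigl((\mu+l/q)\widehat\chi_m R+\widehat\gamma_{n,\sigma,m}R^{\sigma}\bigr)$ and $\widetilde A_m(R,\psi):=A(R,\psi)|\omega_m(\psi)|^{-1}-\widehat A_m(R)$. Since $\mu+l/q>0$, $\widehat\chi_m>0$ and $\widehat\gamma_{n,\sigma,m}<0$, the equation $\widehat A_m(R)=0$ has a unique positive root $R_\ast=\bigl((\mu+l/q)\widehat\chi_m/|\widehat\gamma_{n,\sigma,m}|\bigr)^{1/(\sigma-1)}$, at which $a_\ast:=\partial_R\widehat A_m(R_\ast)=-\tfrac{\sigma-1}{2}(\mu+l/q)\widehat\chi_m<0$; moreover $\widehat A_m>0$ on $(0,R_\ast)$ and $\widehat A_m<0$ on $(R_\ast,\infty)$, so $R_\ast$ attracts the averaged dynamics. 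Exactly as in Theorem~\ref{Th8}, the change of variable
\begin{gather*}
R(t)=R_\ast+\zeta(t)+{\hbox{\rm sgn}}\bigl(\omega_m(\psi(t))\bigr)\,t^{-\frac{2q-m}{2q}}\int_0^{\psi(t)}\widetilde A_m(R_\ast,s)\,ds
\end{gather*}
removes the $\psi$-oscillation of the $R$-equation at $R=R_\ast$ and yields $d\zeta/dt=t^{-1}\mathcal A(\zeta,\psi)+\tilde{\mathcal A}(\zeta,\psi,t)$ with $\mathcal A(\zeta,\psi)=|\omega_m(\psi)|\bigl(a_\ast+\partial_R\widetilde A_m(R_\ast,\psi)\bigr)\zeta+\mathcal O(\zeta^2)$ and a remainder smaller by a factor $\mathcal O(t^{-\varsigma})$, $\varsigma=\min\{1/2q,\mu/2\}$.

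Finally, with $\Xi(\psi):={\hbox{\rm sgn}}(\omega_m(\psi))\int_0^\psi\partial_R\widetilde A_m(R_\ast,s)\,ds$ and $\Xi^+:=\max_\psi|\Xi(\psi)|$, the candidate $\ell(\zeta,\psi,t)=\zeta^2-2t^{-(2q-m)/2q}\zeta^2\bigl(\Xi(\psi)+\Xi^+\bigr)$ satisfies $\zeta^2/2\leq\ell\leq\zeta^2$ for $t$ large and, along the trajectories, $d\ell/dt\leq t^{-1}\zeta^2\bigl(2a_\ast|\omega_m(\psi)|+\mathcal O(t^{-\varsigma})\bigr)\leq-|a_\ast|\omega_m^-t^{-1}\ell<0$ on a shell $\varepsilon/4<|\zeta|<\varepsilon$ once $t$ is sufficiently large, the oscillatory parts canceling because ${\hbox{\rm sgn}}(\omega_m)\omega_m(\psi)=|\omega_m(\psi)|$ and $\langle\widetilde A_m(R_\ast,\cdot)\rangle_\psi=0$. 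Hence any solution entering $\{|\zeta|\leq\varepsilon/4\}$ at a large time stays in $\{|\zeta|<\varepsilon\}$; equivalently $R(t)$ stays near $R_\ast$, so $v(t)=t^{-\mu}[R(t)]^2=\mathcal O(t^{-\mu})$ and, by \eqref{exch1}, \eqref{epsdel} and \eqref{estVE}, $E(t)=t^{-l/q}v(t)=\mathcal O(t^{-\mu-l/q})$. Since $\mu+l/q>0$, this gives polynomial stability of the equilibrium $(0,0)$ of \eqref{FulSys}. The boundary cases are handled by the same Lyapunov argument after the obvious bookkeeping: if $n=2q$ then $\mu=0$, $v=R^2$ and $\lambda_{n,\sigma}$ must be read together with its lower-order correction $\widetilde\lambda_{2q,\sigma}$; if $m=2q$ the prefactor $t^{-(2q-m)/2q}$ is $\equiv1$ and the oscillatory correction is a bounded periodic function of $\psi$.

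The one delicate point — as in Theorem~\ref{Th8} — is the interplay between the near-identity change of $R$ and the construction of $\ell$: one must check that the extra linear-in-$\zeta$ term $t^{-1}|\omega_m(\psi)|\,\partial_R\widetilde A_m(R_\ast,\psi)\,\zeta$ produced by the substitution is compensated, to principal order, by the contribution of $\Xi$ in $d\ell/dt$ (which, using $\dot\psi=t^{-m/2q}\omega_m(\psi)+\dots$, produces $-2t^{-1}|\omega_m(\psi)|\,\partial_R\widetilde A_m(R_\ast,\psi)\,\zeta^2$), so that only the sign-definite averaged term $2a_\ast|\omega_m(\psi)|\zeta^2t^{-1}$ survives; the pieces coming from $\partial_t$ of the $t^{-(2q-m)/2q}$ factors are $\mathcal O(t^{-1-\varsigma}\zeta^2)$ and the cubic terms $\mathcal O(t^{-1}\zeta^3)$ are harmless in a small shell. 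Verifying the stated $\mathcal O(t^{-\varsigma})$ bounds on $\tilde A$, $\tilde B_m$, $\tilde{\mathcal A}$, $\tilde{\mathcal B}_m$ after the two changes of variables is routine but somewhat lengthy.
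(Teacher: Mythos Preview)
Your proposal is correct and follows essentially the same approach as the paper's own proof. The paper truncates at $N=2q$, makes the same substitution $v=t^{-\mu}R^2$ with $\mu=(2q-n)/(q(\sigma-1))$, writes down the resulting system for $(R,\psi)$, identifies the averaged root $R_\ast$ of $\widehat A_{2q,m}(R)=0$ with $a_\ast<0$, and then simply states ``arguing as in the proof of Theorem~\ref{Th8}, we obtain\ldots''; you have unpacked that reference and written out the near-identity change to $\zeta$ and the Lyapunov function $\ell$ explicitly.
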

\begin{proof}
  We choose $N=2q$ and system \eqref{simsys} takes the form \eqref{simsys3} with
$\Lambda_{2q}(v,\psi)=l v/q+\mathcal O(v^{(\sigma+1)/2})$ as $v\to 0$. Since $\omega(E)\not\equiv {\hbox{\rm const}}$, we have $l\geq 1$. The change of the variable $v(t)=t^{- \mu}[R(t)]^2$ with $\mu=(2q-n)/(q(\sigma-1))\geq 0$ transforms system \eqref{simsys3} into
\begin{gather}
\label{xpsys4}
    \frac{dR}{dt}=t^{-1} A_{2q}(R,\psi)+\tilde A_{2q}(R,\psi,t), \quad
    \frac{d\psi}{dt}=t^{-\frac {m}{2q}} \omega_m(\psi)+\tilde B_{m}(R,\psi,t),
\end{gather}
where $ A_{2q}(R,\psi)\equiv ((\mu+l/q) R + \lambda_{n,\sigma}(\psi) R^\sigma)/2$,
$
    |\tilde A_{2q}(R,\psi,t)|\leq t^{-1}K R  (t^{- {1}/{2q}} + t^{- {\mu}/{2}} )
$, $
    |\tilde B_{m}(R,\psi,t)|\leq t^{- {m}/{2q}} K R   (t^{- {1}/{2q}} + t^{- {\mu}/{2}} )
$
for all $R \in [0,  R_\ast]$, $t\geq t_\ast$, $\psi\in\mathbb R$ with some constants $R_\ast>0$, $t_\ast\geq t_0$ and $K>0$.
It can easily be checked that the equation $ \widehat {A}_{2q,m}(R):=\langle A_{2q}(R,\psi)/|\omega_m(\psi)|\rangle_\psi=0$ has a positive root $R_\ast: =((\mu+{l}/{q}) |\langle\gamma_{n,\sigma}/|\omega_m|\rangle_\psi|^{-1})^{1/(\sigma-1)}$
such that $a_\ast:=\partial \widehat {A}_{2q,m}(R_\ast)=-(\sigma-1) (\nu+{l}/{q})<0$.
Hence, arguing as in the proof of Theorem~\ref{Th8}, we obtain the following: for all $\varepsilon>0$ there exist $\delta_1>0$ and $t_1\geq t_\ast$ such that the solutions of \eqref{xpsys4} with initial data $|R(t_1)-R_\ast|\leq \delta_1$, $\psi(t_1)\in\mathbb R$ satisfy $|R(t)-R_\ast|<\varepsilon$ as $t\geq t_1$. Thus, $v(t)=\mathcal O(t^{-\mu})$ as $t\to \infty$. Returning to the original variables $(x,y)$ completes the proof of the theorem.
\end{proof}

\section{Examples}
\label{secEX}
Consider the limiting system \eqref{LimSys} with $H_0(x,y)\equiv (x^2+y^2)/2-h x^4/4 $ and $h\geq 0$. In this case the equilibrium $(0,0)$ is a center and the level lines $H_0(x,y)\equiv E$ with $E\in (0,(4h)^{-1})$, lying in the neighbourhood of the equilibrium, correspond to $T(E)$-periodic solutions such that $\omega(E)=1-3hE/4+\mathcal O(E^2)$ as $E\to 0$.

{\bf 1}. Consider the perturbed non-autonomous system in the following form:
\begin{gather}
\label{ex1}
\frac{dx}{dt}=\partial_y H_0(x,y), \quad \frac{dy}{dt}=-\partial_x H_0(x,y)+ t^{-\frac 12} \big(a(S(t))x+b(S(t))y\big), \quad t \geq 1,
\end{gather}
where $S(t)\equiv t+s_1 t^{1/2}+s_2 \log t$, $a(S)\equiv a_0+a_1\cos S$ and  $b(S)\equiv b_0+b_1\cos S$.
System \eqref{ex1} is of form \eqref{FulSys} with $q=2$ and $\varkappa=1$.
It can easily be checked that the change of the variables described in section~\ref{sec2} with $l=2$, $N=M=4$, $v_3 \equiv \psi_3 \equiv 0$,
\begin{eqnarray*}
 v_2 &\equiv& -\frac{\mathcal E}{6}
        \Big( 3c_0 \cos(2S+2\theta+\delta_0)
            +3c_1\cos (S+2\theta+\delta_1)
            +c_1  \cos(3S+2\theta+\delta_1)+6 b_1 \sin S\Big) ,\\
\psi_2 &\equiv& \frac{1}{12} \Big( 3 c_0  \sin(2 S+2\theta+\delta_0) +3 c_1 \sin(S+2 \theta+\delta_1)+ c_1 \sin(3 S + 2 \theta+\delta_1)  + 6 a_1 \sin S\Big),\\
 v_4 &\equiv&   \frac{\mathcal E}{144}
\Big(48 (a_0 a_1+b_0 b_1)  \cos S
-24( a_1 b_0 - a_0 b_1 - 3 b_1 s_1) \sin S +3(4 b_1^2- a_1^2+ 5 a_1 b_1 ) \sin( 4S + 2 \theta)\\
&&
-36( a_0 a_1+ b_0 b_1+a_1 s_1) \cos(S+2 \theta)
+36(a_1 b_0 + a_0 b_1 +  b_1 s_1 ) \sin(S+2 \theta)\\
&&
- 4 (5a_0 a_1- 9 b_0 b_1-a_1 s_1) \cos (3S+2 \theta)
+4 (11 a_0 b_1+3 a_1 b_0 - b_1 s_1 ) \sin(3 S + 2 \theta)\\
&&
-12(2 a_1^2   +  3 a_0^2   - 2 b_1^2 ) \cos(2 S + 2 \theta)
+12(3 a_0 b_0 + 4 a_1 b_1) \sin(2 S + 2 \theta)+12( a_1^2-2 b_1^2) \cos 2 S\Big),\\
\psi_4
                &\equiv&
(a_1 b_0 - a_0 b_1) \frac{\cos S }{12}- (a_1 s_1-\frac76 (a_0 a_1 +  b_0 b_1)  ) \frac{\sin S}{4}  +c_1^2 \frac{ \sin 2 S}{24}+ ( a_0^2   + \frac 23 a_1^2 )\frac{ \sin(2 S + 2 \theta)}{8}  \\
&&
+ (a_1 b_0 +  b_1(2 a_0 +  s_1))\frac{\cos(S+2 \theta)}{8}  + a_1(3 a_0  + s_1 ) \frac{ \sin(S+2\theta)}{8}  + (3 a_0 b_0+2 a_1 b_1) \frac{\cos(2 S + 2 \theta) }{24}
 \\
&&
+ a_1 b_1\frac{\cos(2 S + 4 \theta)}{16}  + (a_1^2-b_1^2) \frac{\sin(2 S + 4 \theta)}{32}+ a_1( b_1+a_1)\frac{\sin(4 S + 2 \theta)}{96}+  a_1 b_1 \frac{\cos(6S + 4 \theta) }{144}   \\
&&
+ ( a_1 b_0+\frac {b_1}{3}(2 a_0 - s_1)) \frac{\cos(3 S + 2 \theta) }{24}+ a_1(5  a_0  -   s_1 )\frac{ \sin(3 S + 2 \theta)}{72} + (a_0 a_1 - b_0 b_1)\frac{\sin(3 S + 4 \theta)}{16}  \\
&&
+ (a_1 b_0+a_0 b_1)\frac{\cos(3 S + 4 \theta)}{16}    + ( a_0 b_0 + \frac 23 a_1 b_1 )\frac{ \cos(4 S + 4 \theta)}{16} + ( a_1^2 - b_1^2 ) \frac{\sin(6 S + 4 \theta)}{288}  \\
&&
+ ( a_1 b_0 + a_0 b_1 )\frac{\cos(5 S + 4 \theta)}{48}  +  (a_0 a_1  - b_0 b_1) \frac{\sin(5 S + 4 \theta)}{48} + ((a_0^2-b_0^2)+\frac 23 (a_1^2-b_1^2)) \frac{ \sin(4 S + 4 \theta)}{32}
 \end{eqnarray*}
transforms system \eqref{ex1} to the following:
\begin{gather}
\label{e1vp}
\frac{dv}{dt}=t^{-\frac{1}{2}}\Lambda_2(v,\psi)+t^{-1}\Lambda_4(v,\psi)+\tilde \Lambda_4(v,\psi,t),\quad
\frac{d\psi}{dt}=t^{-\frac{1}{2}}\Omega_2(v,\psi)+t^{-1}\Omega_4(v,\psi)+\tilde \Omega_4(v,\psi,t),
\end{gather}
where
\begin{gather*}
     \Lambda_2(v,\psi)=b_0 v, \quad \Lambda_4(v,\psi)= \frac{v}{4}\Big(4 -  a_1 c_1 \sin (2\psi+\delta_1) \Big),\\
     \Omega_2(v,\psi)=-\frac 12 (s_1+a_0), \quad
      \Omega_4(v,\psi)=-s_2-\frac{1}{24}\Big(3c_0^2+ 2c_1^2+  3 a_1 c_1 \cos(2\psi+\delta_1)\Big)-\frac{3h}{4}v,\\
      c_0=\sqrt{a_0^2+b_0^2}, \quad c_1=\sqrt{a_1^2+b_1^2}, \quad \delta_0=\arccos \Big(\frac{a_0}{ c_0}\Big), \quad \delta_1=\arccos \Big(\frac{a_1}{ c_1}\Big),
\end{gather*}
 $\tilde \Lambda_4=\mathcal O(t^{-3/2})$, $\tilde \Omega_4=\mathcal O(t^{-3/2})$ as $t\to\infty$ for all $v\in [0,\Delta_0]$ and $\psi\in\mathbb R$.

Let us consider several possible cases.

(I) Let $s_1+a_0\neq 0$. If $b_0\neq 0$, then system \eqref{e1vp} satisfies the conditions of  Theorem~\ref{Th4} with $n=m=2<2q$, $\lambda_{2}(\psi)\equiv b_0$. In this case, phase drifting occurs: $|\psi(t)|\to \infty$ as $t\to \infty$, and the equilibrium $(0,0)$ is exponentially stable if $b_0<0$ and unstable if $b_0>0$ (see~Fig.~\ref{Fig01}).
If  $b_0=0$, system \eqref{e1vp} satisfies the conditions of  Theorem~\ref{Th5} with $n=4=2q$, $m=2$ and $\widehat\gamma_{4,2}=0$. From Remark~\ref{Rem4} it follows that the equilibrium $(0,0)$ of system \eqref{ex1} is at least unstable with a weight $t^{1/2}$.

\begin{figure}
\centering
\subfigure[$b_0=-0.5$]{
\includegraphics[width=0.28\linewidth]{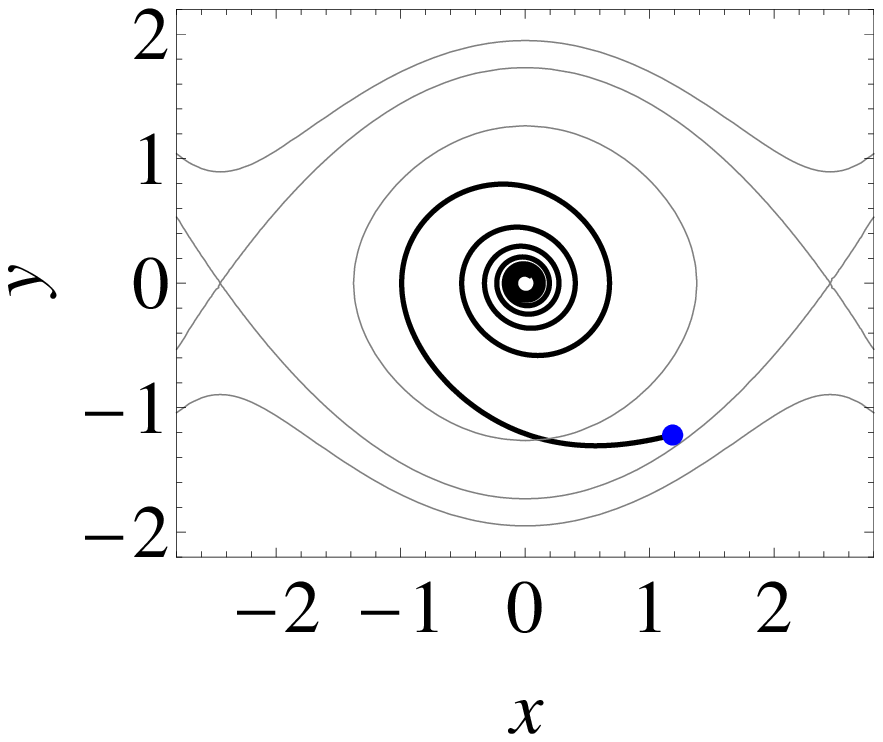} \ \
\includegraphics[width=0.32\linewidth]{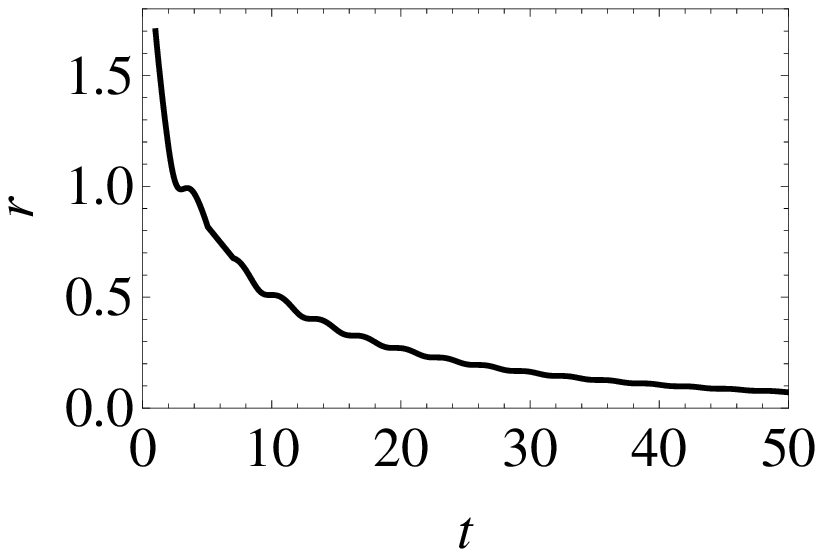} \ \
\includegraphics[width=0.32\linewidth]{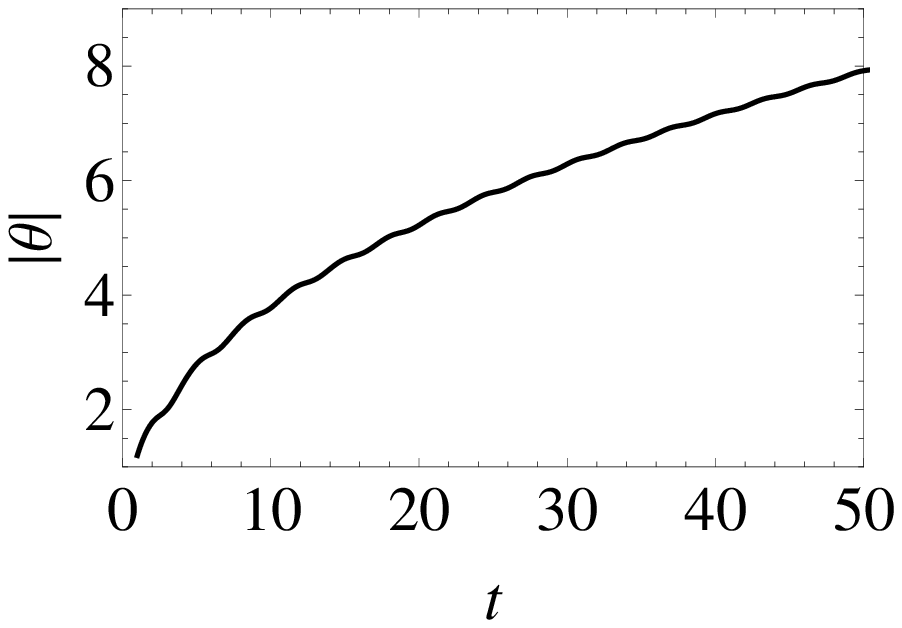}
}
\\
\subfigure[$b_0=0.5$]{
\includegraphics[width=0.28\linewidth]{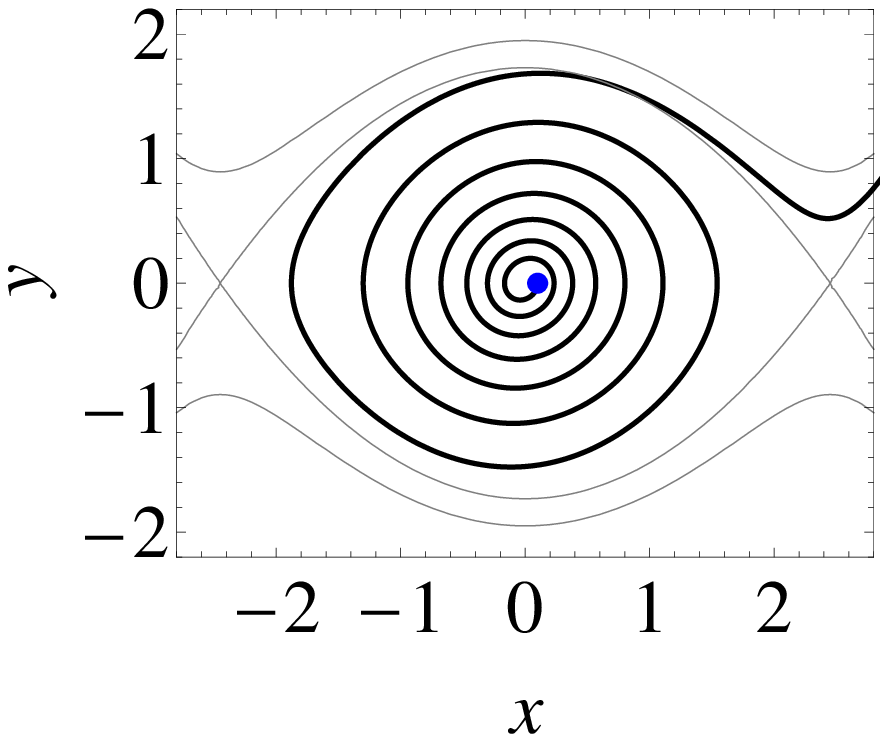} \ \
\includegraphics[width=0.32\linewidth]{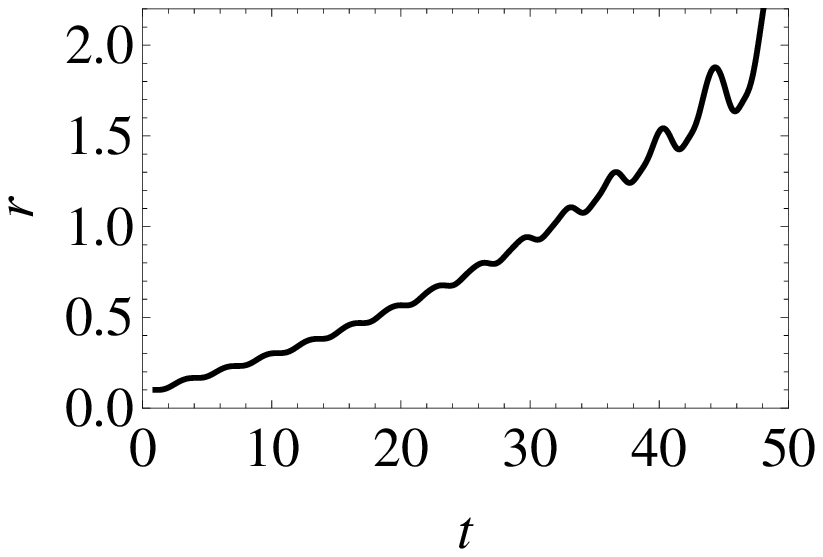} \ \
\includegraphics[width=0.32\linewidth]{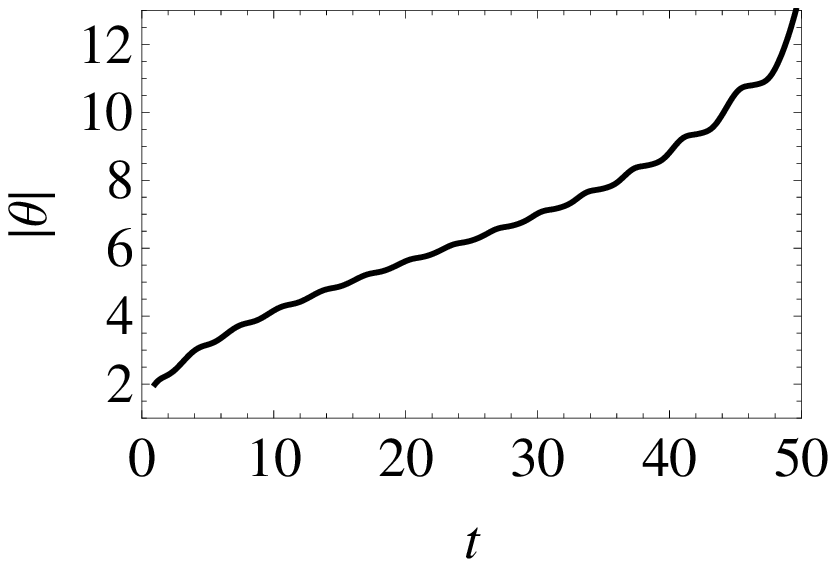}
}
\caption{\footnotesize The evolution of $(x(t),y(t))$,  $r(t)$, $|\theta(t)|$ for solutions of \eqref{ex1} with $a_0=a_1=b_1=s_2=0$, $h=1/6$, $s_1=1$, where $x(t)=r(t)\cos(\theta(t)+S(t))$, $y(t)=-r(t)\sin(\theta(t)+S(t))$. The blue points correspond to initial data $(x(1),y(1))$. Gray solid curves correspond to level lines of $H_0(x,y)$.} \label{Fig01}
\end{figure}

(II) Let $s_1+a_0=0$,  $a_1>0$ and
\begin{gather}\label{e1cond}
-\frac{1}{24} \big(3a_1c_1+ 3 c_0^2+2 c_1^2 \big)< s_2<\frac{1}{24}\big(3a_1 c_1- 3 c_0^2-2 c_1^2 \big).
\end{gather}
In this case $\Omega_2\equiv 0$ and $\Omega_4$ satisfies \eqref{as01} with
\begin{gather*}
    \psi_\ast=-\frac{1}{2}\arccos\Big(-\frac{24s_2+3c_0^2+2c_1^2}{3 a_1 c_1}\Big)-\frac{\delta_1}{2}+\pi j, \quad j\in\mathbb Z,
    \quad
    \vartheta_4=\frac{a_1c_1}{4}\sin(2\psi_\ast+\delta_1)<0.
 \end{gather*}
If $b_0\neq 0$, system \eqref{e1vp} satisfies the conditions of  Theorem~\ref{Th1} with $n=2$, $m=4=2q$, $\lambda_2(\psi_\ast)\equiv b_0$. In this case, phase locking occurs and the stability of the equilibrium $(0,0)$ depends on the sign of $b_0$ (see~Fig.~\ref{Fig02}).
If $b_0=0$, system \eqref{e1vp} satisfies the conditions of Theorem~\ref{Th1} with $n=m=4=2q$ and $\lambda_4(\psi_\ast)=-\vartheta_4>0$. Hence, the equilibrium $(0,0)$ is  unstable. Note that if $h=0$, then $\omega(E)\equiv 1$, $l=0$ and $v(t)\sim t^{|\vartheta_4|}$ as $t\to\infty$ (see Fig.~\ref{Fig03}).

\begin{figure}
\centering
\subfigure[$b_0=-0.6$]{
\includegraphics[width=0.28\linewidth]{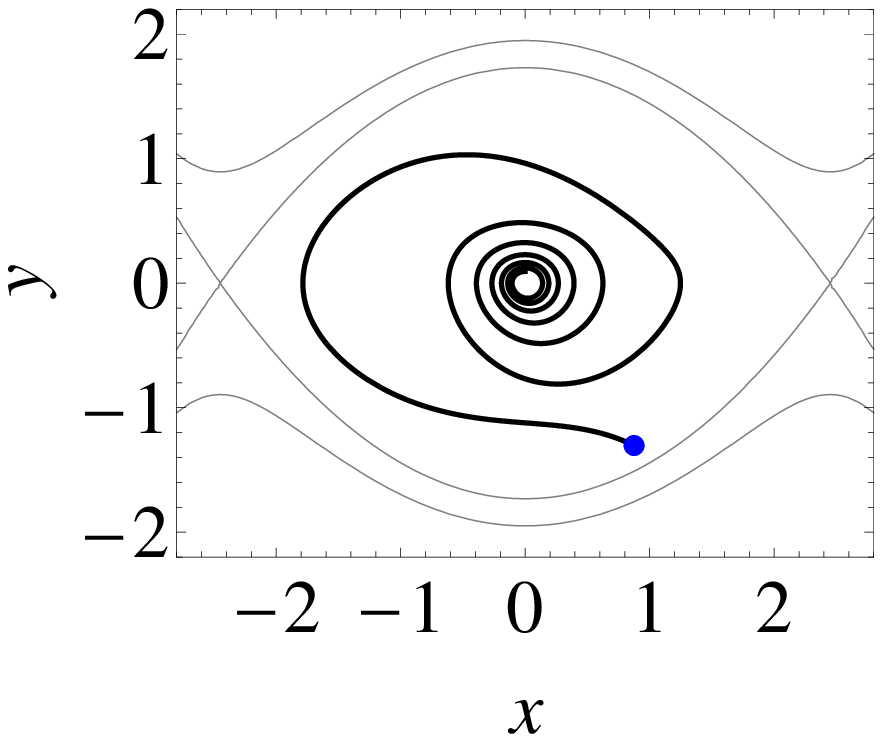} \ \
\includegraphics[width=0.32\linewidth]{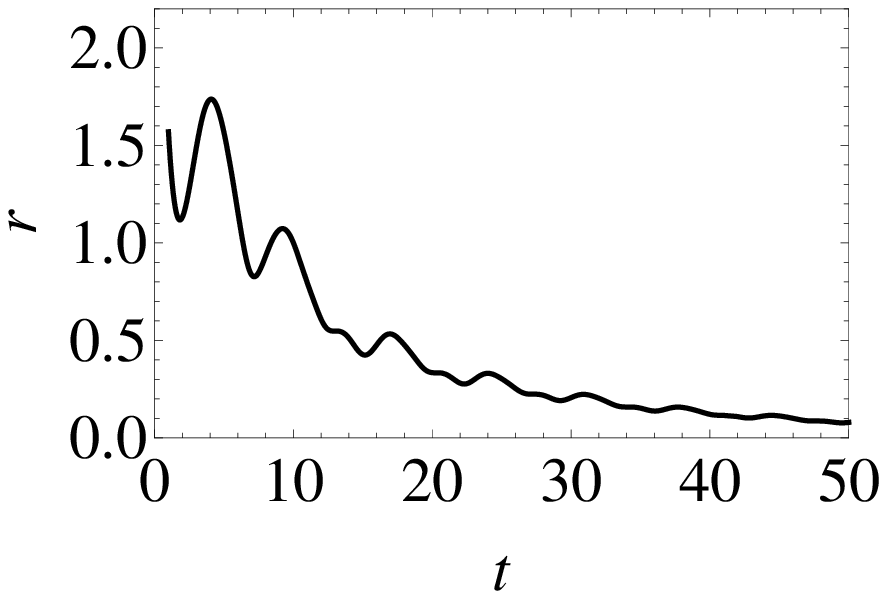} \ \
\includegraphics[width=0.32\linewidth]{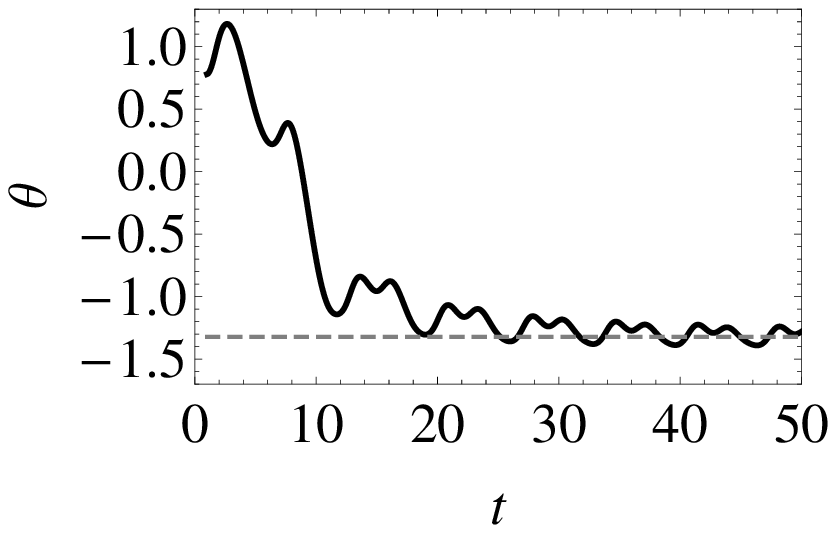}
}
\\
\subfigure[$b_0=0.6$]{
\includegraphics[width=0.28\linewidth]{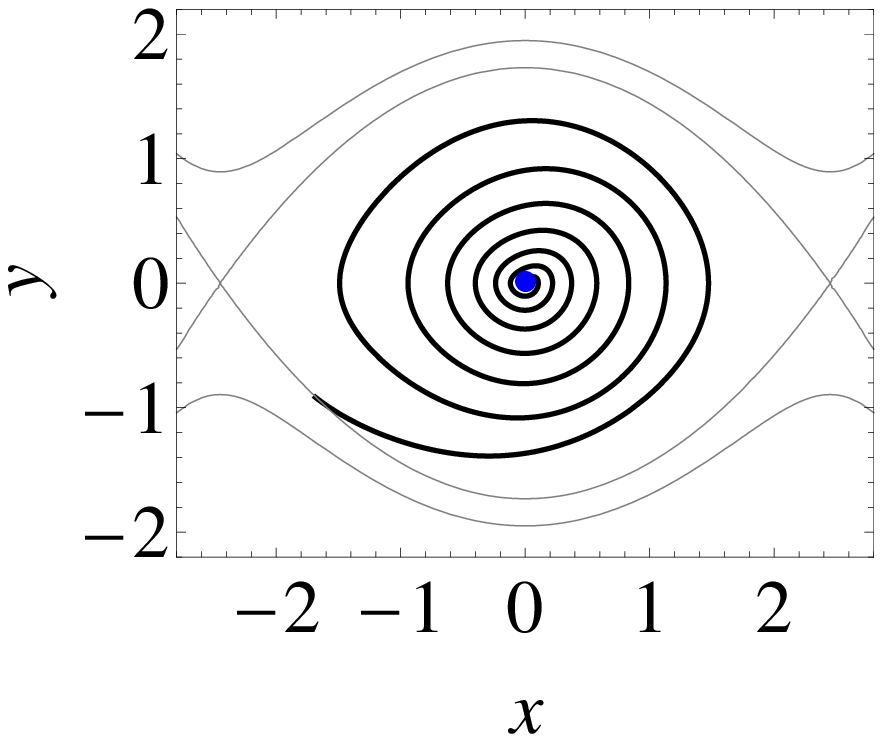} \ \
\includegraphics[width=0.32\linewidth]{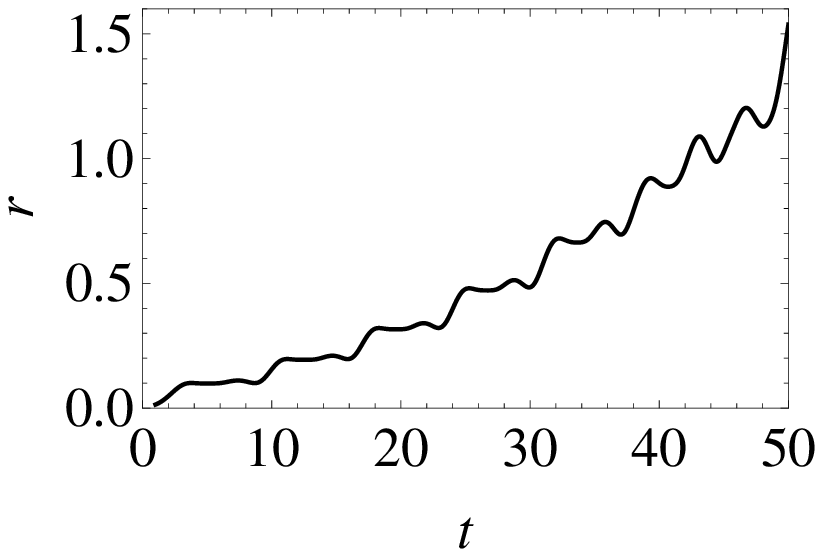} \ \
\includegraphics[width=0.32\linewidth]{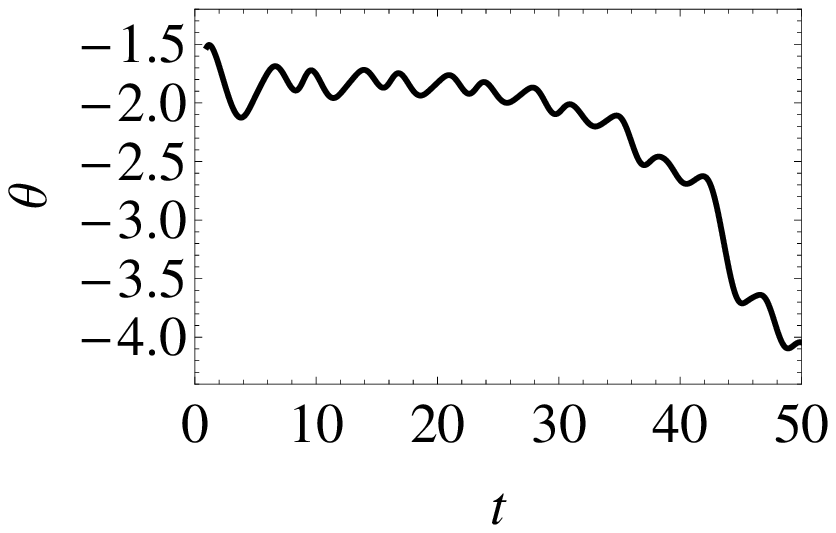}
}
\caption{\footnotesize The evolution of $(x(t),y(t))$, $r(t)$, $\theta(t)$ for solutions of \eqref{ex1} with $a_0=a_1=0.8$, $b_1=0.6$, $s_1=-0.8$, $s_2=-1/6$, $h=1/6$, where $x(t)=r(t)\cos(\theta(t)+S(t))$, $y(t)=-r(t)\sin(\theta(t)+S(t))$. The blue points correspond to initial data $(x(1),y(1))$. The gray solid curves correspond to level lines of $H_0(x,y)$. The gray dashed curve corresponds to $\theta= \psi_\ast$, where $\psi_\ast\approx -1.322$.} \label{Fig02}
\end{figure}

\begin{figure}
\centering
\includegraphics[width=0.25\linewidth]{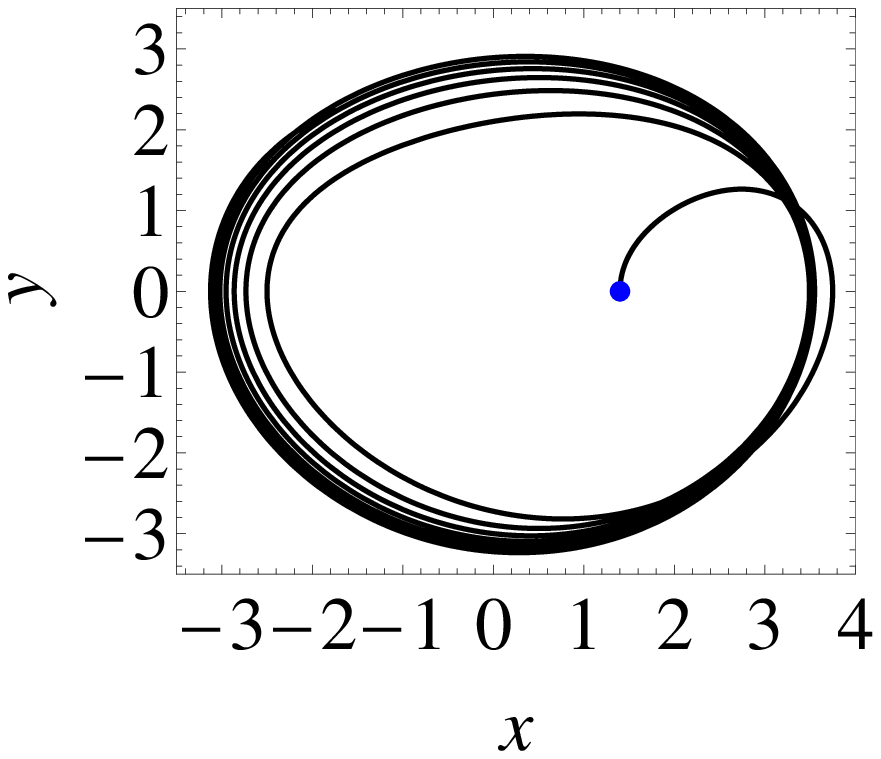} \ \
\includegraphics[width=0.32\linewidth]{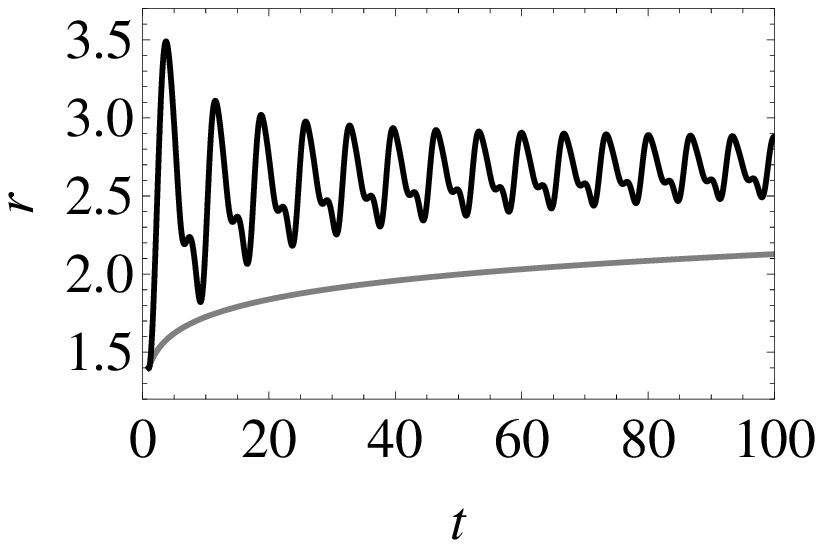} \ \
\includegraphics[width=0.32\linewidth]{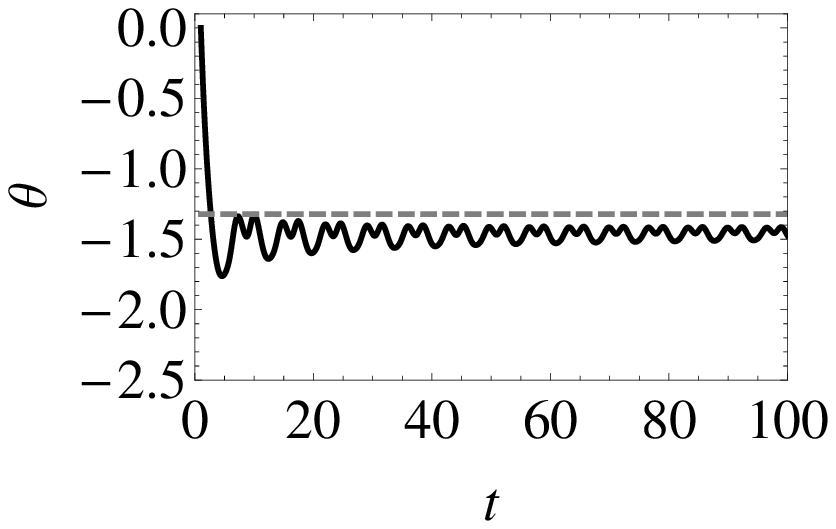}
\caption{\footnotesize The evolution of $(x(t),y(t))$, $r(t)$, $\theta(t)$ for solutions of \eqref{ex1} with $a_0=1$, $a_1=0.8$, $b_0=0$, $b_1=0.6$, $s_1=-1$, $s_2=-1/6$, $h=0$, where $x(t)=r(t)\cos(\theta(t)+S(t))$, $y(t)=-r(t)\sin(\theta(t)+S(t))$. The blue point corresponds to initial data $(x(1),y(1))$. The gray solid curve corresponds to $r=r(1) t^{{|\vartheta_4|}/{2}}$. The gray dashed curve corresponds to $\theta= \psi_\ast$, where $\psi_\ast\approx -1.322$.} \label{Fig03}
\end{figure}

(III) Let $s_1+a_0=0$, $a_1> 0$ and assumption \eqref{e1cond} does not hold such that
$
  |24s_2+3c_0^2+2c_1^2|>3 a_1 c_1
$.
Then it follows from Theorem~\ref{Th4} that the stability of the equilibrium $(0,0)$ is determined by the sign of $b_0$ (see Fig.~\ref{Fig04}).
\begin{figure}
\vspace{-2ex}
\centering
\subfigure[$b_0=-0.6$]{
\includegraphics[width=0.28\linewidth]{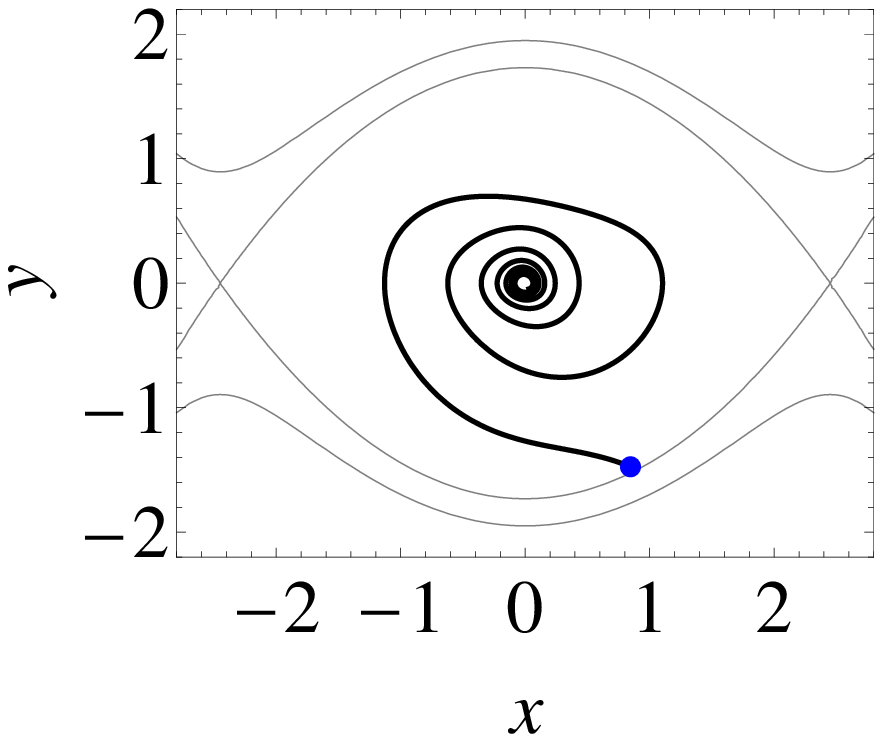} \ \
\includegraphics[width=0.32\linewidth]{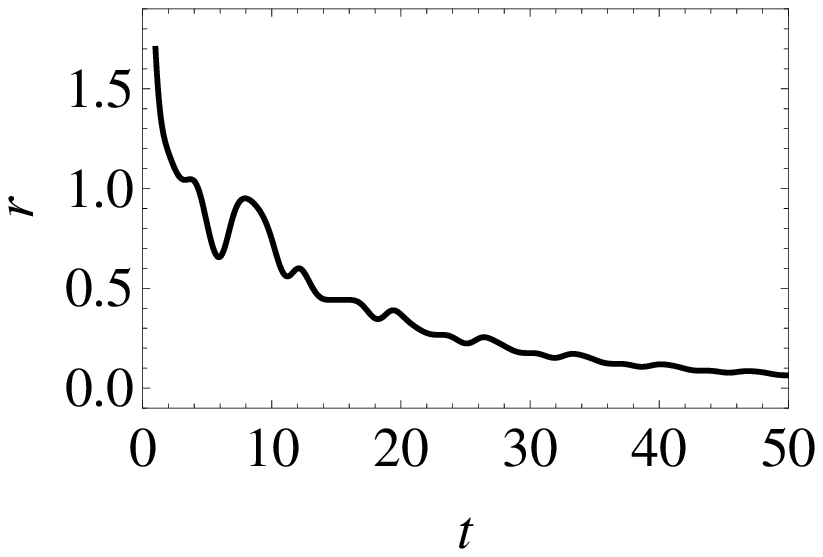} \ \
\includegraphics[width=0.32\linewidth]{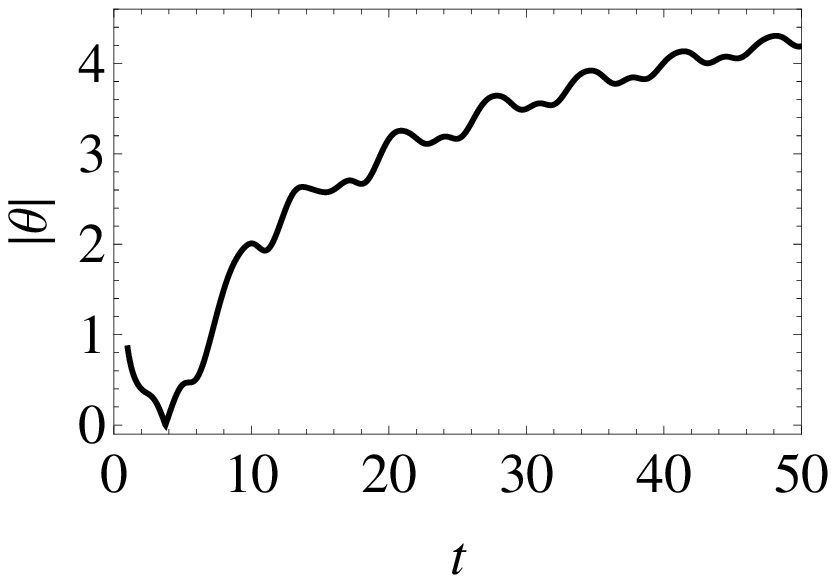}
}
\\
\subfigure[$b_0=0.6$]{
\includegraphics[width=0.28\linewidth]{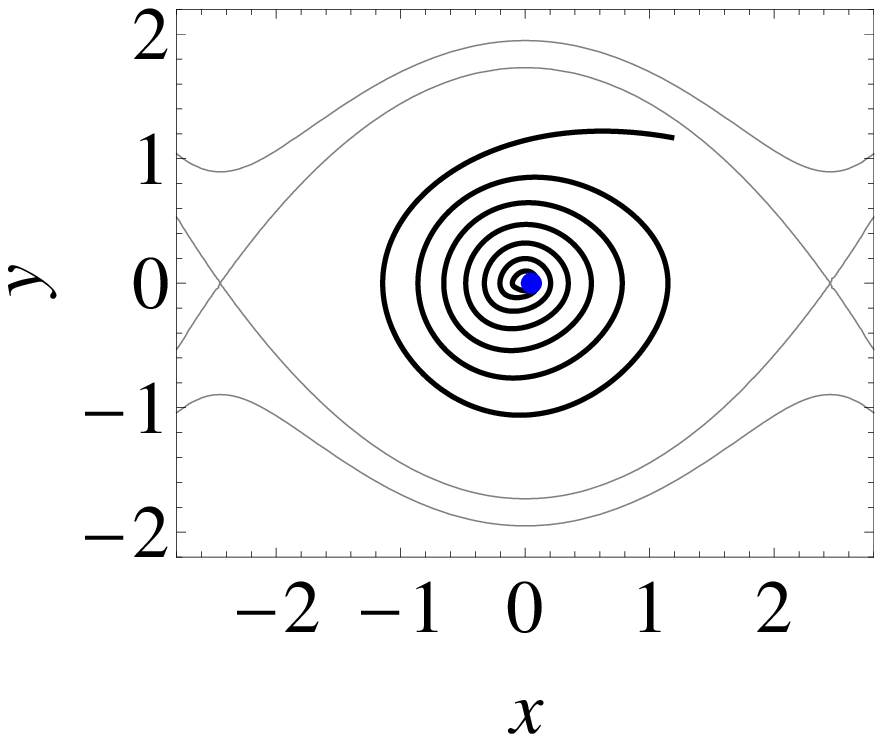} \ \
\includegraphics[width=0.32\linewidth]{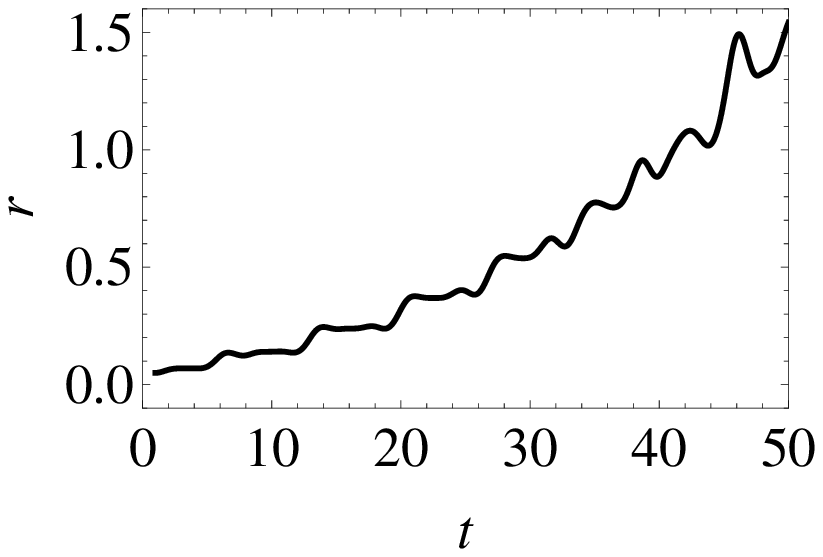} \ \
\includegraphics[width=0.32\linewidth]{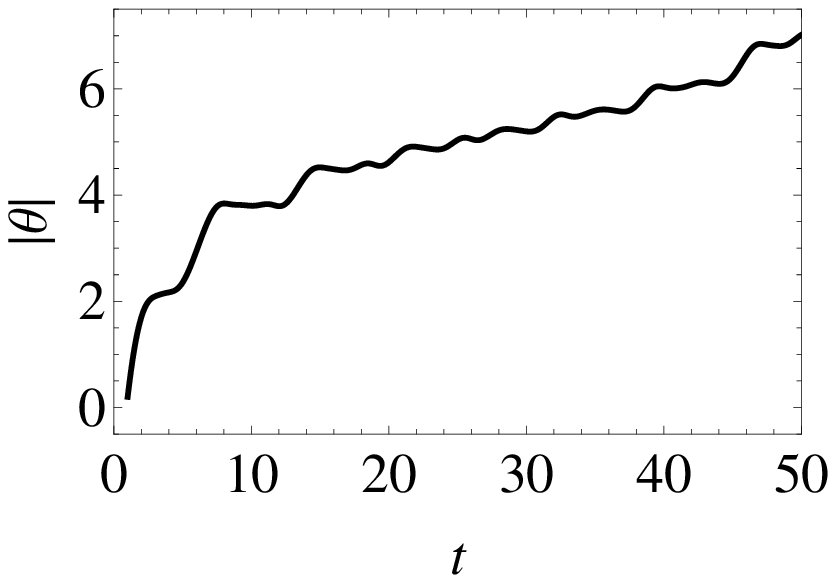}
}
\caption{\footnotesize The evolution of $(x(t),y(t))$, $r(t)$, $|\theta(t)|$ for solutions of \eqref{ex1} with $a_0=a_1=0.8$, $b_1=0.6$, $s_1=-0.8$, $s_2=1$, $h=1/6$, where $x(t)=r(t)\cos(\theta(t)+S(t))$, $y(t)=-r(t)\sin(\theta(t)+S(t))$. The blue points correspond to initial data $(x(1),y(1))$. The gray solid curves correspond to level lines of $H_0(x,y)$.} \label{Fig04}
\end{figure}

{\bf 2}.  Consider a similar non-autonomous system, but with another perturbation phase:
\begin{gather}
\label{ex2}
\frac{dx}{dt}=\partial_y H_0(x,y), \quad \frac{dy}{dt}=-\partial_x H_0(x,y)+ t^{-\frac 12} \big(a(S(t))x+b(S(t))y\big), \quad t \geq 1,\\
\nonumber S(t)\equiv 2t+s_1 t^{1/2}+s_2 \log t, \quad a(S)\equiv a_0+a_1\cos S, \quad b(S)\equiv b_0+b_1\cos S.
\end{gather}
This system is of form \eqref{FulSys} with $q=2$ and $\varkappa=2$.
Under the transformation described in section~\ref{sec2} with $l=1$, $N=M=2$,
\begin{eqnarray*}
 v_2&=& -\frac{\mathcal E}{8}\Big(  4 c_0 \cos(S+2 \theta+\delta_0)+ c_1 \cos(2 S+2 \theta+\delta_1)+4 b_1 \sin S\Big),\\
 \psi_2&=& \frac{1}{16} \Big( 4 c_0  \sin(S+2\theta+\delta_0)+c_1 \sin(2 S+2\theta+\delta_1)  + 4 a_1 \sin S \Big),
 \end{eqnarray*}
system \eqref{ex2} is reduced to the following:
\begin{gather*}
\label{e2vp}
\frac{dv}{dt}=t^{-\frac 12}\Lambda_2(v,\psi)+\tilde \Lambda_2(v,\psi,t),\quad
\frac{d\psi}{dt}=t^{-\frac 12}\Omega_2(v,\psi)+\tilde \Omega_2(v,\psi,t),
\end{gather*}
where
\begin{align*}
&\Lambda_2(v,\psi)=\Big(b_0 -\frac{c_1}{2} \sin (2\psi+\delta_1)\Big) v,\quad \Omega_2(v,\psi)=-\frac{1}{4} \Big(2a_0 +s_1 + c_1 \cos(2\psi+\delta_1)+3 h v\Big),
\end{align*}
and $\tilde \Lambda_2=\mathcal O(t^{-1})$, $\tilde \Omega_2=\mathcal O(t^{-1})$ as $t\to\infty$ for all $v\in [0,\Delta_0]$, $\psi\in\mathbb R$.

If $-c_1-2a_0<s_1<c_1-2a_0$, then $\Omega_2$ satisfies \eqref{as01} with
\begin{gather*}
    \psi_\ast=-\frac{1}{2}\arccos\Big(-\frac{2a_0+s_1}{c_1}\Big)-\frac{\delta_1}{2}+\pi j, \quad j\in \mathbb Z, \quad
    \vartheta_2=\frac{c_1}{2}\sin(2\psi_\ast+\delta_1)<0.
 \end{gather*}
In this case, $\lambda_2(\psi_\ast)=b_0-\vartheta_2$. Hence, it follows from Theorem~\ref{Th1} that the equilibrium $(0,0)$ is exponentially stable if $b_0-\vartheta_2< 0$ and unstable if $b_0-\vartheta_2> 0$. (see~Fig.~\ref{Fig21}).

If either $s_1<-c_1-2a_0$ or $s_1>c_1-2a_0$, system \eqref{e2vp} satisfies the conditions of Theorem~\ref{Th5} with $n=m=2<2q$, $\widehat\gamma_{2,2}=b_0 \widehat\chi_{2}$, and $\widehat \chi_2:=\langle|\Omega_2(0,\psi)|^{-1}\rangle_\psi>0$. Therefore, the stability of equilibrium $(0,0)$ is determined by the sign of $b_0$ (see Fig.~\ref{Fig22}).

\begin{figure}
\centering
\subfigure[$b_0=-0.6$]{
\includegraphics[width=0.28\linewidth]{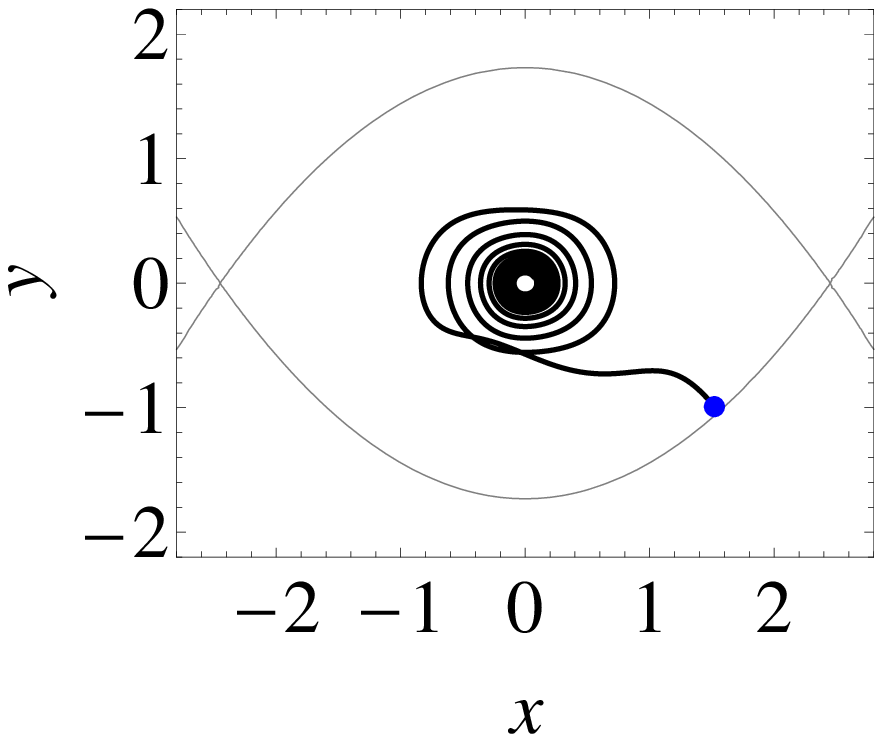} \ \
\includegraphics[width=0.32\linewidth]{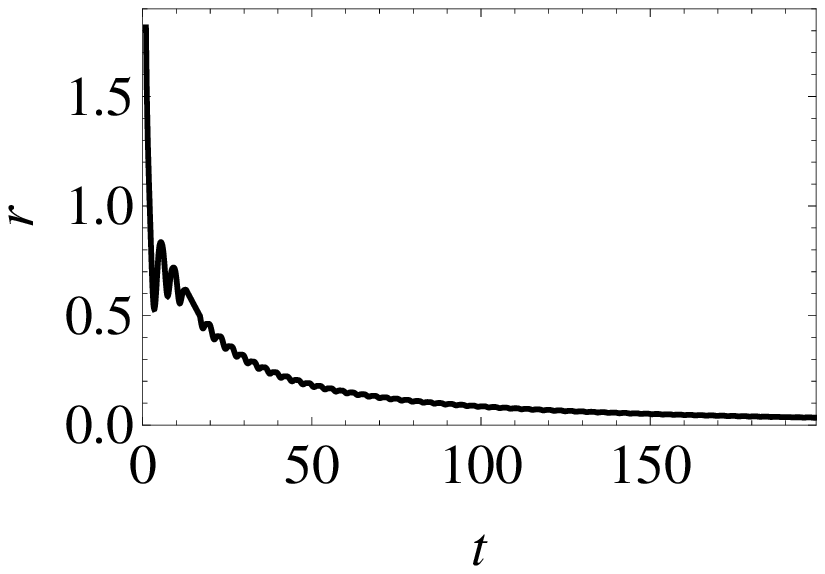} \ \
\includegraphics[width=0.32\linewidth]{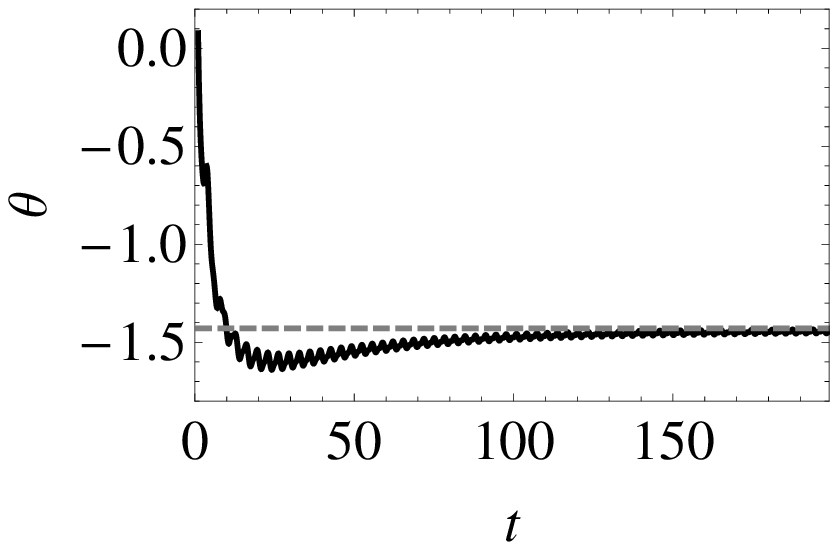}
}
\\
\subfigure[$b_0=-0.2$]{
\includegraphics[width=0.28\linewidth]{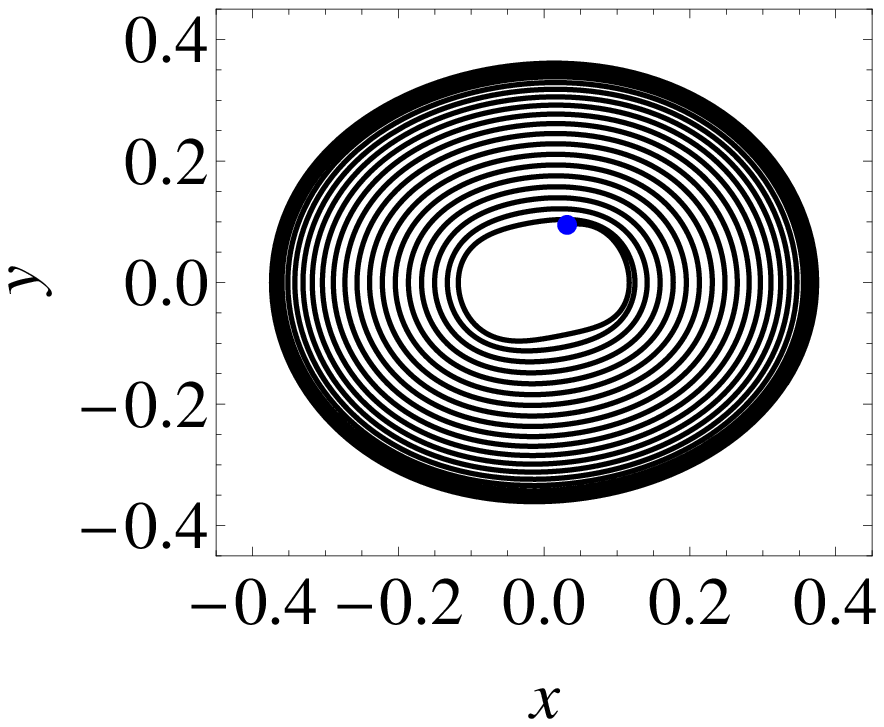} \ \
\includegraphics[width=0.32\linewidth]{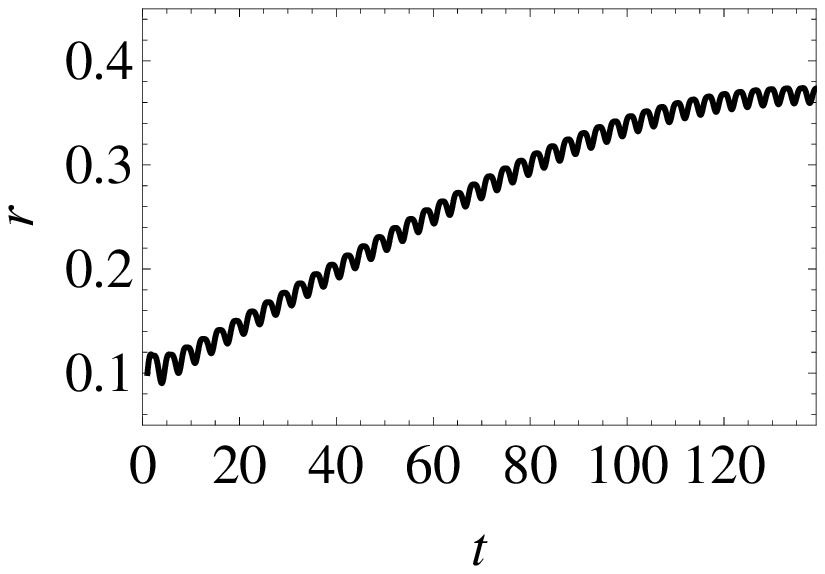} \ \
\includegraphics[width=0.32\linewidth]{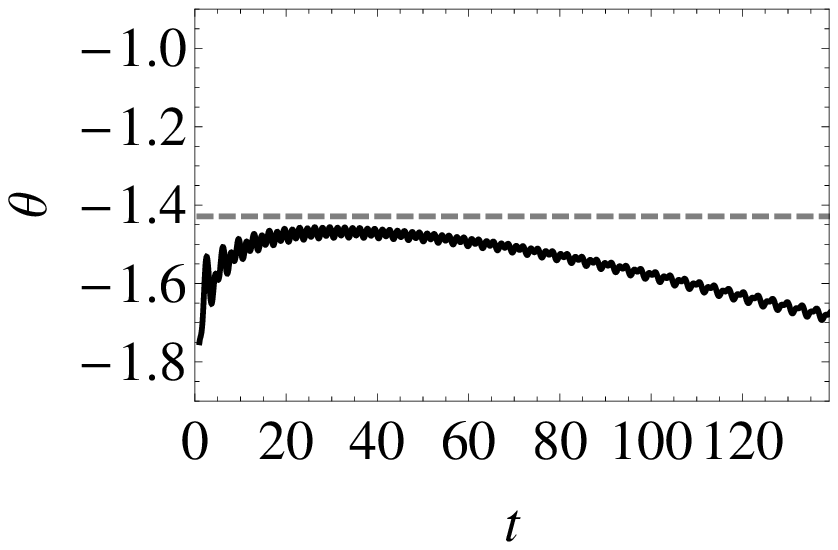}
}
\caption{\footnotesize The evolution of $(x(t),y(t))$, $R(t)$, $\theta(t)$ for solutions of \eqref{ex2} with $a_0=a_1=0.8$, $b_1=0.6$, $s_1=-1$, $s_2=0$, $h=1/6$, where $x(t)=r(t)\cos(\theta(t)+S(t)/2)$, $y(t)=-r(t)\sin(\theta(t)+S(t)/2)$. The blue points correspond to initial data $(x(1),y(1))$. The gray solid curves correspond to level lines of $H_0(x,y)$. The gray dashed curves correspond to $\theta= \psi_\ast$, where $\psi_\ast\approx -1.4289$.} \label{Fig21}
\end{figure}

\begin{figure}
\centering
\subfigure[$b_0=-0.1$]{
\includegraphics[width=0.28\linewidth]{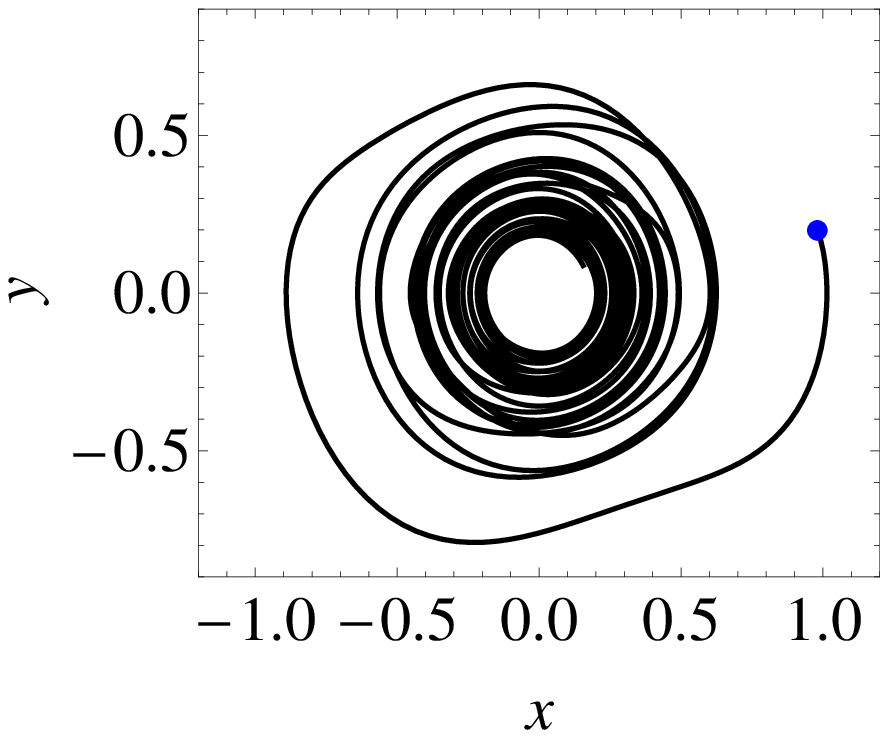} \ \
\includegraphics[width=0.32\linewidth]{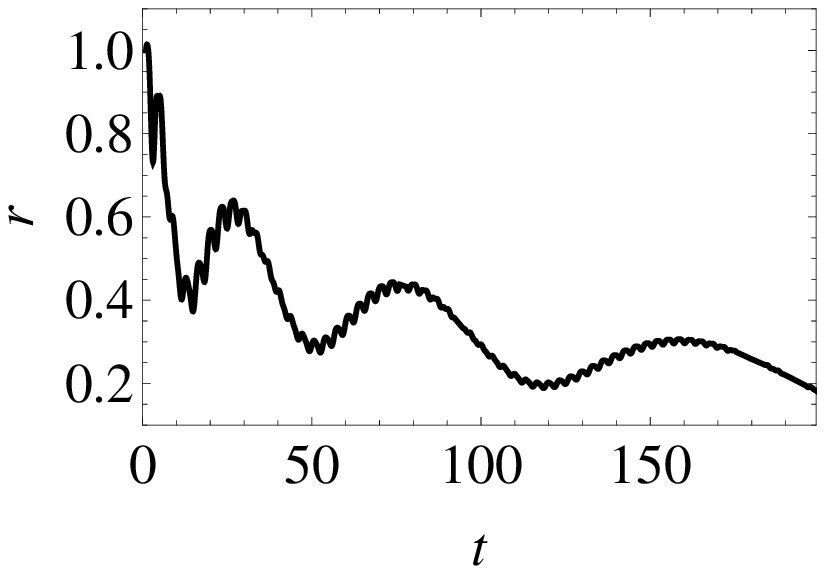} \ \
\includegraphics[width=0.32\linewidth]{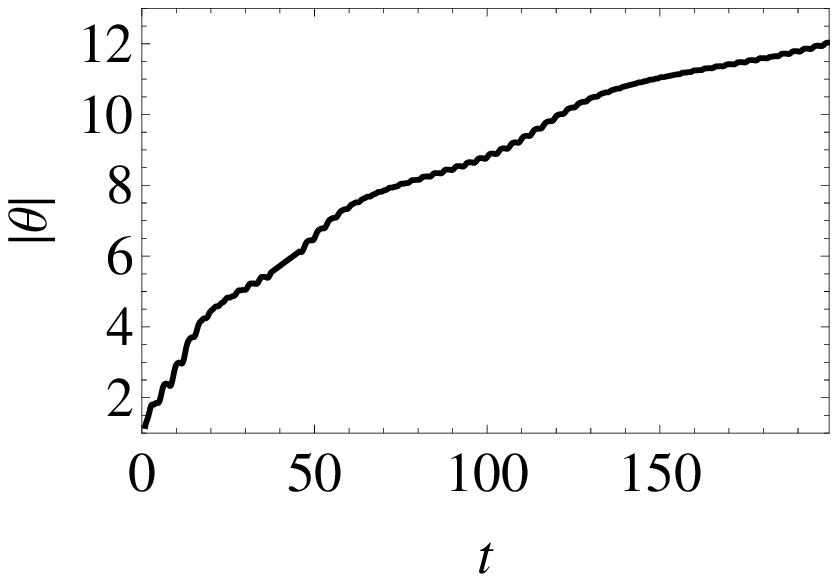}
}
\\
\subfigure[$b_0=0.1$]{
\includegraphics[width=0.28\linewidth]{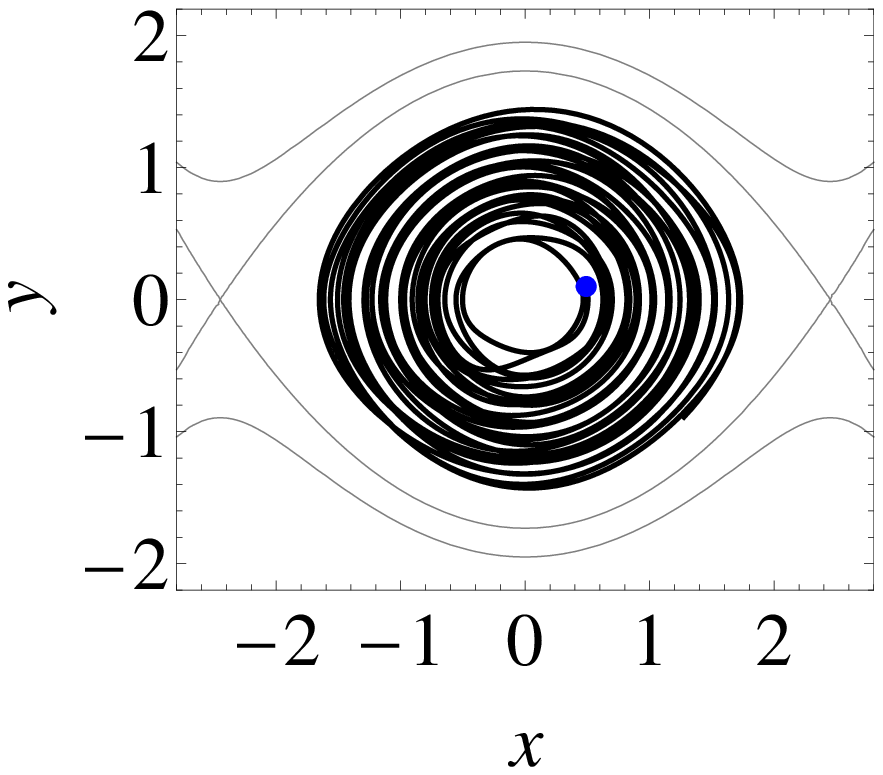} \ \
\includegraphics[width=0.32\linewidth]{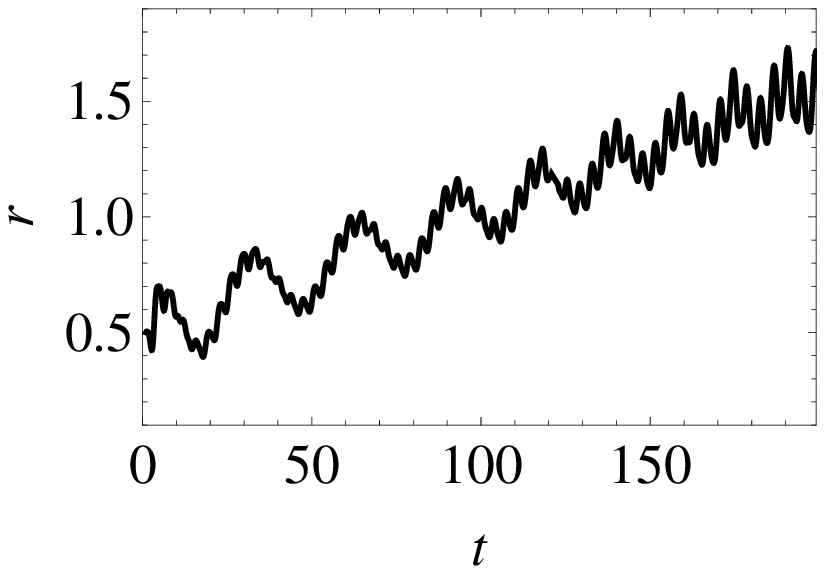} \ \
\includegraphics[width=0.32\linewidth]{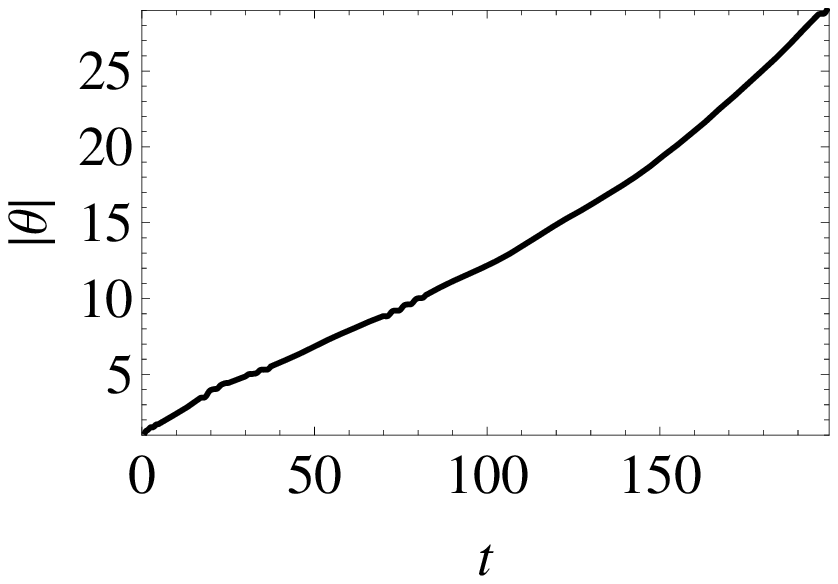}
}
\caption{\footnotesize The evolution of $(x(t),y(t))$, $R(t)$, $|\theta(t)|$ for solutions of \eqref{ex2} with $a_0=a_1=0.8$, $b_1=0.6$, $s_1=s_2=0$, $h=1/6$, where $x(t)=r(t)\cos(\theta(t)+S(t)/2)$, $y(t)=-r(t)\sin(\theta(t)+S(t)/2)$. The blue points correspond to initial data $(x(1),y(1))$. The gray solid curves correspond to level lines of $H_0(x,y)$. } \label{Fig22}
\end{figure}

{\bf 3}.  Finally, consider a little more complicated system:
\begin{gather}
\begin{split}
\label{ex3}
&
\frac{dx}{dt}=\partial_y H_0(x,y), \\
&\frac{dy}{dt}=-\partial_x H_0(x,y)+ t^{-\frac 14}  z(S(t)) x^2 y + t^{-\frac{1}{2}} a(S(t)) x+t^{-1}b(S(t)) y, \quad t \geq 1,
\end{split}
\end{gather}
with $S(t)\equiv t+s_2 t^{\frac 12} + s_4 \log t$,   $z(S)\equiv z_0+z_1 \cos S$, $a(S)\equiv a_0+a_1\cos S$, and $b(S)\equiv b_0+b_1\cos S$.
It is clear that system \eqref{ex3} is of form \eqref{FulSys} with $q=4$ and $\varkappa=1$.
The transformation, described in section~\ref{sec2}  with $l=2$, $N=M=8$, $v_k\equiv \psi_k\equiv 0$ for $k\in\{2,3,5,7\}$,
\begin{eqnarray*}
   v_4
    &\equiv &
    -\frac{\mathcal E}{6} \Big( 3 a_0 \cos(2S + 2\theta)+3 a_1\cos(S + 2 \theta) + a_1 \cos(3 S + 2 \theta)\Big),\\
    \psi_4
    &\equiv &  \frac{1}{12}
\Big(6 a_1 \sin S + 3 a_0 \sin (2S + 2\theta) + 3 a_1 \sin (S + 2 \theta) + a_1 \sin (3 S + 2 \theta)\Big),\\
    v_6
    &\equiv &  \frac{\mathcal E^2}{2}\Big(\frac{ z_0}{4}  \sin (4 S+4\theta) -  z_1 \sin S +  \frac{z_1}{6} \sin(3 S + 4 \theta)+ \frac{z_1}{10}\sin (5 S + 4 \theta)\Big),\\
    \psi_6
        &\equiv &   \frac{\mathcal E}{4}\Big(\frac{z_0}{4}(
 8\cos^4(S + \theta)-3)  + \frac{z_1}{12} (4\cos(3 S + 2 \theta) + \cos(3 S + 4 \theta))  + \frac{z_1}{10} \cos(5 S + 4 \theta) \\
    &&+ z_1 \cos(S+2 \theta) \Big),
\\
    v_8
    &\equiv & \frac{\mathcal E}{12}
    \Big(
        4 a_0 a_1 \cos S + a_1^2 \cos 2 S -  9 a_0 a_1 \cos(S+2 \theta) - 3 a_1 s_2 \cos(S+2 \theta)- b_1 \sin S\\
    &&-3 a_0^2 \cos(2 S + 2 \theta) - 2 a_1^2 \cos(2 S + 2 \theta) - \frac{5}{3} a_0 a_1 \cos(3 S + 2 \theta)  + 6 b_1 \sin(S+2 \theta) \\
    &&  + 6 b_0 \sin(2 S + 2 \theta) +  2 b_1 \sin(3 S + 2 \theta)+ \frac{a_1 s_2}{3}  \cos(3 S + 2 \theta)   - \frac{a_1^2}{4}\cos (4 S + 2 \theta)\Big) \\
    && - \frac{h\mathcal E^2}{16}\Big(
 11 a_1 \cos(S-2 \theta) + 5 a_0  \cos(2 S + 2 \theta)  + a_1 \cos(3 S + 2 \theta) - \frac{2a_1}{3} \cos(3 S + 4 \theta) \\
    && - a_0 \cos( 4S + 4 \theta)  - \frac{2 a_1}{5}  \cos (5 S + 4 \theta)\Big)
,\\
    \psi_8
        &\equiv &
\frac{ b_0}{4} \cos(2 S + 2 \theta) + \frac{b_1}{4} \cos(S+2 \theta)+
    \frac{ b_1}{12} \cos(3S+ 2 \theta) + \frac{7 a_0 a_1}{24} \sin S - \frac{a_1 s_2}{4} \sin S \\
&& + \frac{3a_0 a_1}{8}\sin(S+2 \theta) + \frac{a_1^2}{24} \sin(2S) +
    \frac{a_1 s_2}{8} \sin(S+2 \theta)  +
    \frac{ a_0^2}{8} \sin(2 S + 2 \theta) + \frac{a_1^2}{12} \sin(2 S + 2 \theta)\\
&& + \frac{a_1^2}{32} \sin(2 S + 4 \theta)+ \frac{5 a_0 a_1}{72} \sin(3 S + 2 \theta) -  \frac{ a_1 s_2}{72} \sin(3 S + 2 \theta) + \frac{ a_1^2}{96} \sin(4 S + 2 \theta) \\
&&   + \frac{a_0 a_1}{16} \sin(3 S + 4 \theta) + \frac{ a_0^2}{21} \sin(4 S + 4 \theta) + \frac{a_1^2}{48} \sin(4 S + 4 \theta) + \frac{ a_0 a_1}{48} \sin(5 S + 4 \theta)  \\
&& + \frac{a_1^2}{288} \sin(6 S + 4 \theta)
    + \frac{ h \mathcal E}{8}  \Big(3 a_1 \sin S + 7a_1 \sin (S+ 2 \theta) + 4 a_0 \sin (2 S + 2 \theta) + a_1  \sin (3 S + 2 \theta)\\
&&   - \frac{ a_1}{6} \sin(3 S + 4 \theta) -\frac{a_0}{4} \sin(4 S + 4 \theta) - \frac{ a_1}{10} \sin(5 S + 4 \theta)\Big),
\end{eqnarray*}
reduces system \eqref{ex3} to
\begin{gather}
\label{e3vp}
      \frac{dv}{dt}=\sum_{k=4}^8 t^{- \frac{k}{8}}\Lambda_k(v,\psi)+ \tilde \Lambda_8(v,\psi,t),\quad
    \frac{d\psi}{dt}=\sum_{k=4}^8 t^{-\frac k8}\Omega_k(v,\psi)+\tilde \Omega_8(v,\psi,t),
 \end{gather}
where $\Lambda_4(v,\psi) \equiv \Lambda_5(v,\psi) \equiv \Lambda_7(v,\psi) \equiv \Omega_5(v,\psi)\equiv \Omega_7(v,\psi)\equiv 0$,
\begin{align*}
    &   \Lambda_6(v,\psi)   =   \frac{z_0}{2} v^2,\quad
        \Lambda_8(v,\psi)  =   \frac{v}{4} \Big(2 + 4 b_0 - a_1^2 \sin 2\psi \Big),\\
    &  \Omega_4(v,\psi)    =   -\frac{1}{2} (s_2+a_0) - \frac{3 h}{4} v, \quad
        \Omega_6(v,\psi)   = - \frac{375h^3}{256} v^3, \\
    &   \Omega_8(v,\psi) = - s_4 - \frac{1}{24} \Big(3a_0^2 + 2a_1^2+3a_1^2\cos 2\psi\Big)-\frac{3a_0h }{8} v -\frac{375 h^3}{256}v^3,
\end{align*}
and $\tilde \Lambda_8=\mathcal O(t^{-5/4})$, $\tilde \Omega_8=\mathcal O(t^{-5/4})$ as $t\to\infty$ for all $v\in [0,\Delta_0]$, $\psi\in\mathbb R$.
We see that system \eqref{e3vp} satisfies \eqref{as2} with $n=6$, $d=2$, $\sigma=3$, $\lambda_{n,\sigma}(\psi)\equiv z_0/2$, $\lambda_{n+d}(\psi)\equiv (2+4b_0-a_1^2\sin2\psi)/2$.

Let us consider several possible cases.

(I) Let $s_2+a_0\neq 0$. Then condition \eqref{as02} holds with $m=4$ and a phase drifting regime occurs in system \eqref{FulSys}. If  $2+4b_0-a_1^2>0$ and $z_0>0$, then it follows from Theorem~\ref{Th6} that the equilibrium $(0,0)$ is unstable  (see Fig.~\ref{Fig311}).  By applying Theorem~\ref{Th7} with $(n+d,m)\in\Gamma_{01}$, we conclude that if $b_0+5/4<0$, the equilibrium $(0,0)$ is polynomially stable (see Fig.~\ref{Fig31}, a).  If $b_0+1/2>0$ and $z_0<0$, polynomial stability with the asymptotic estimate $x^2(t)+y^2(t)\sim 2 R_\ast t^{-\nu-l/q}$ as $t\to\infty$ follows from Theorem~\ref{Th8}, where $\nu=d/(q(\sigma-1))=1/4$ and the parameter $R_\ast$ is determined by \eqref{Rast} (see Fig.~\ref{Fig31}, b).
\begin{figure}
\centering
\includegraphics[width=0.28\linewidth]{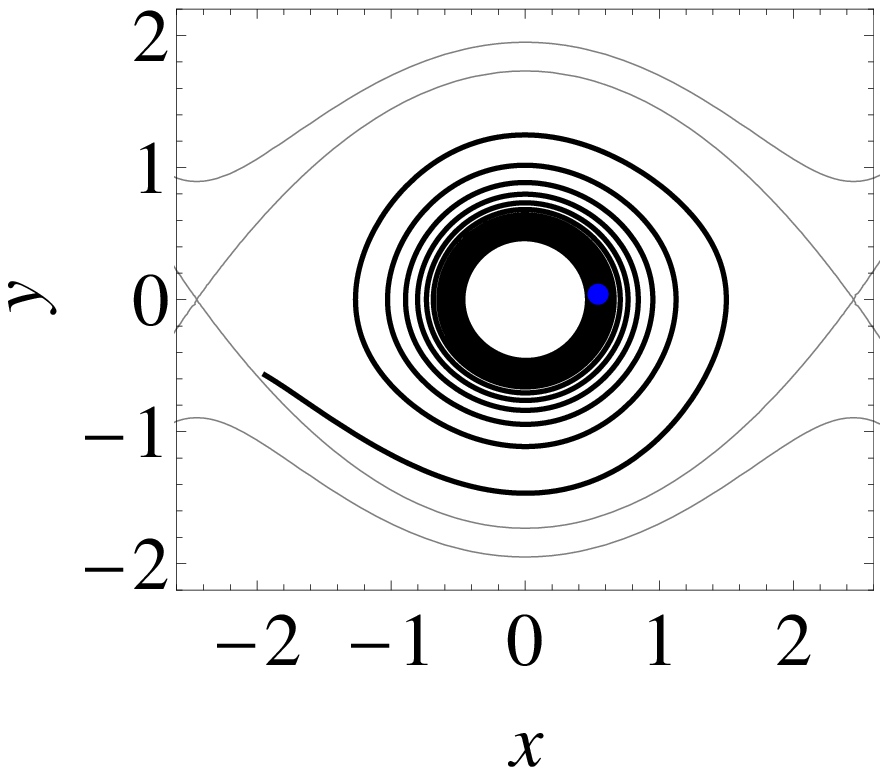} \ \
\includegraphics[width=0.32\linewidth]{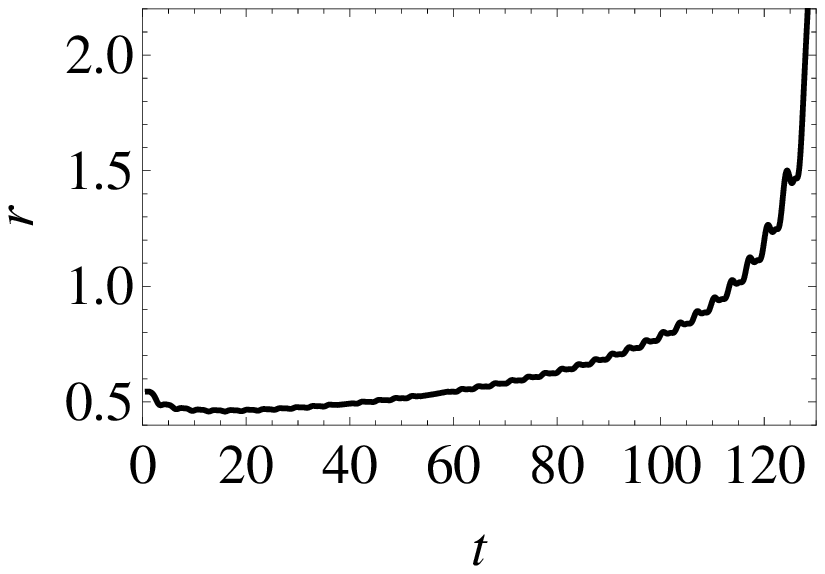} \ \
\includegraphics[width=0.32\linewidth]{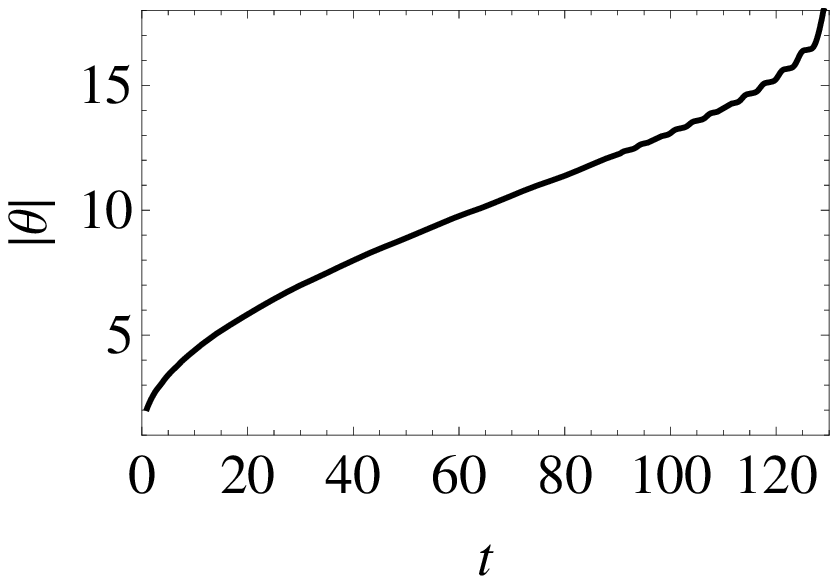}
\caption{\footnotesize The evolution of $(x(t),y(t))$, $r(t)$, $|\theta(t)|$ for solutions of \eqref{ex3} with $b_0=-1/4$, $z_0=0.6$, $s_2=1$, $h=1/6$, $a_0=a_1=b_1=z_1=s_4=0$, where $x(t)=r(t)\cos(\theta(t)+S(t))$, $y(t)=-r(t)\sin(\theta(t)+S(t))$. The blue point corresponds to initial data $(x(1),y(1))$. The gray solid curves correspond to level lines of $H_0(x,y)$.} \label{Fig311}
\end{figure}

\begin{figure}
\centering
\subfigure[$b_0=-1.4$, $z_0=0.6$]{
\includegraphics[width=0.28\linewidth]{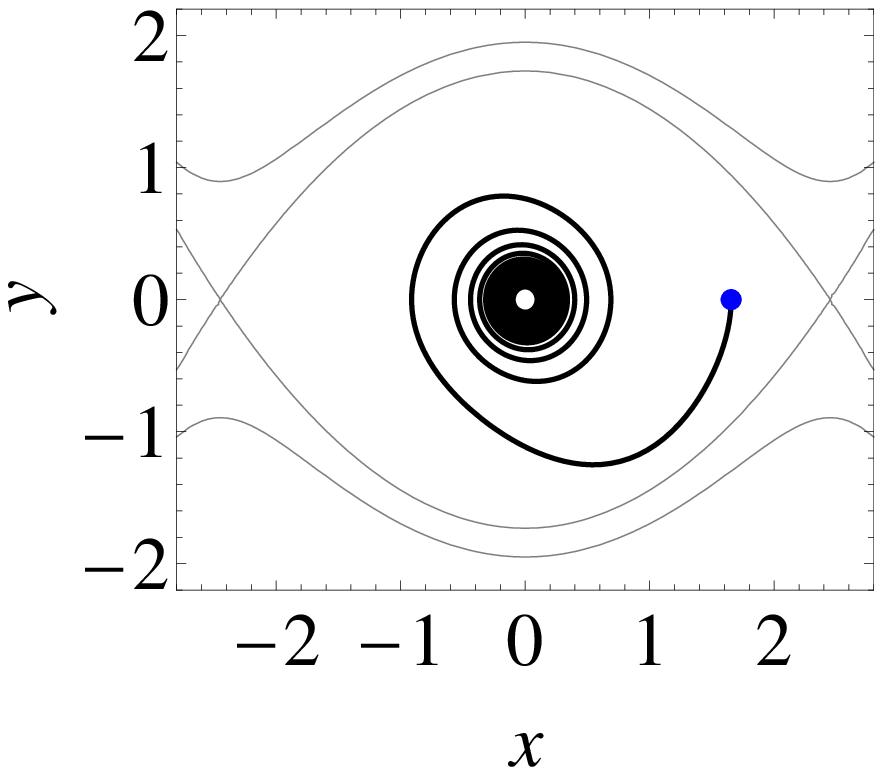} \ \
\includegraphics[width=0.32\linewidth]{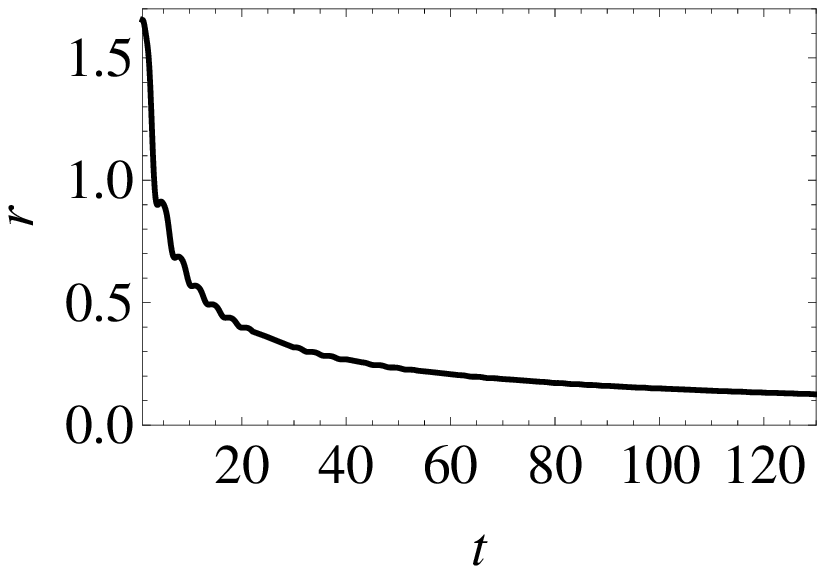} \ \
\includegraphics[width=0.32\linewidth]{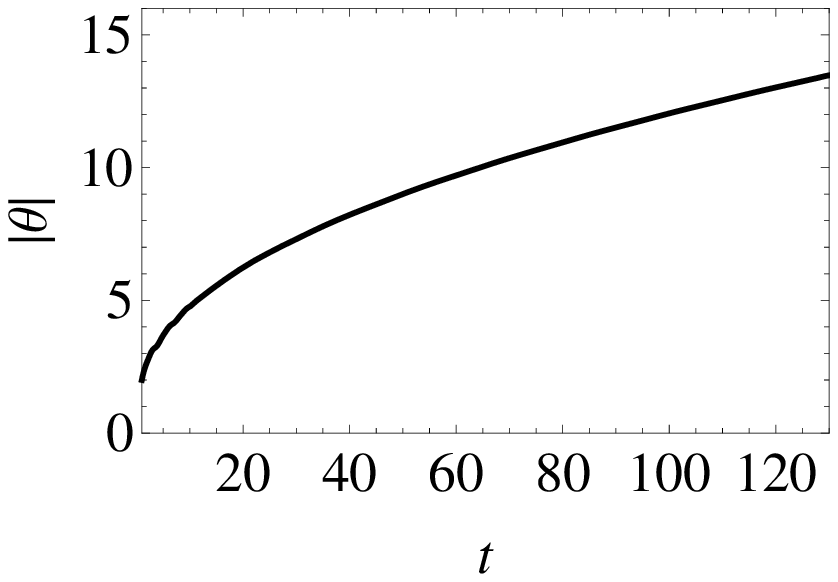}
}
\\
\subfigure[$b_0=1$, $z_0=-0.6$]{
\includegraphics[width=0.28\linewidth]{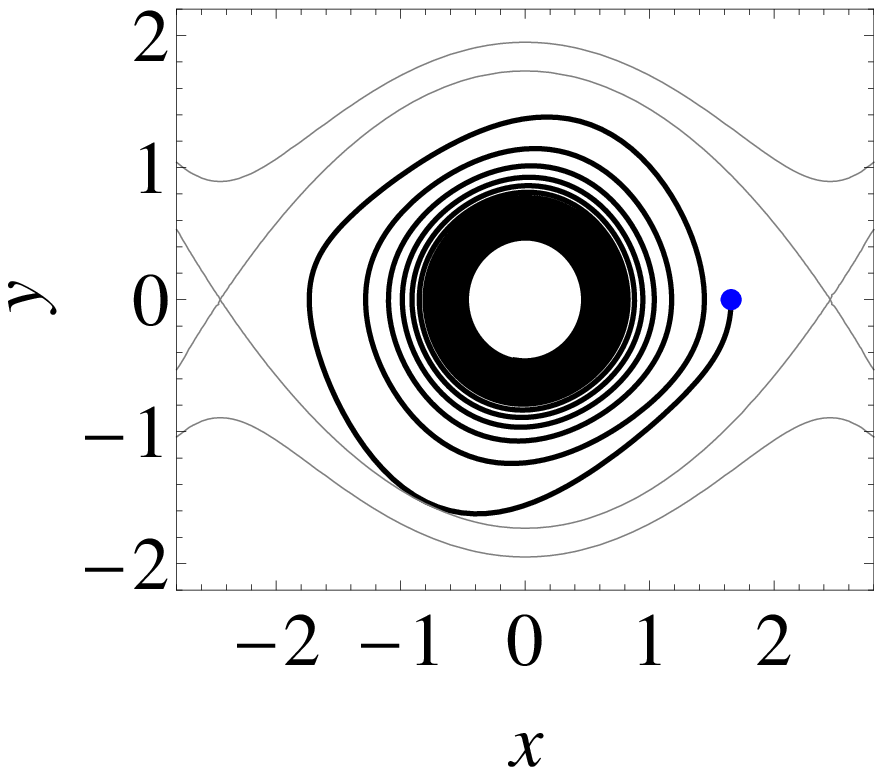} \ \
\includegraphics[width=0.32\linewidth]{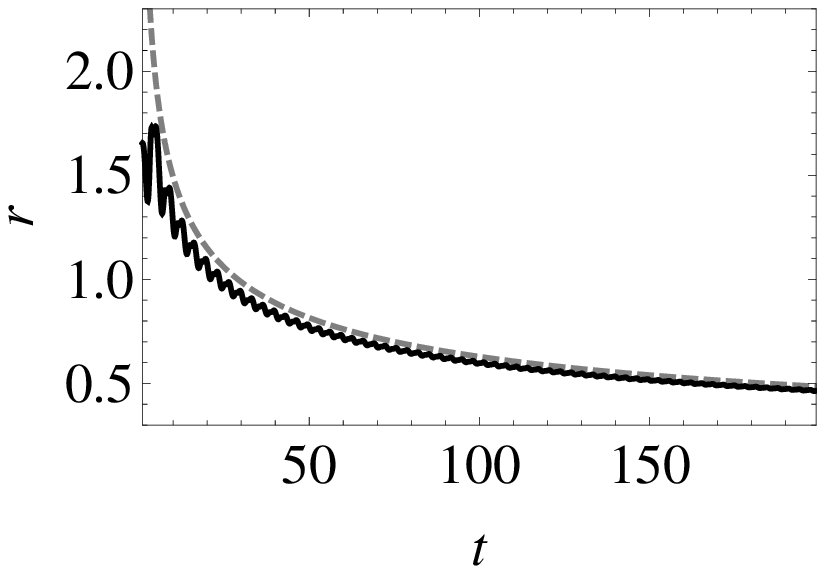} \ \
\includegraphics[width=0.32\linewidth]{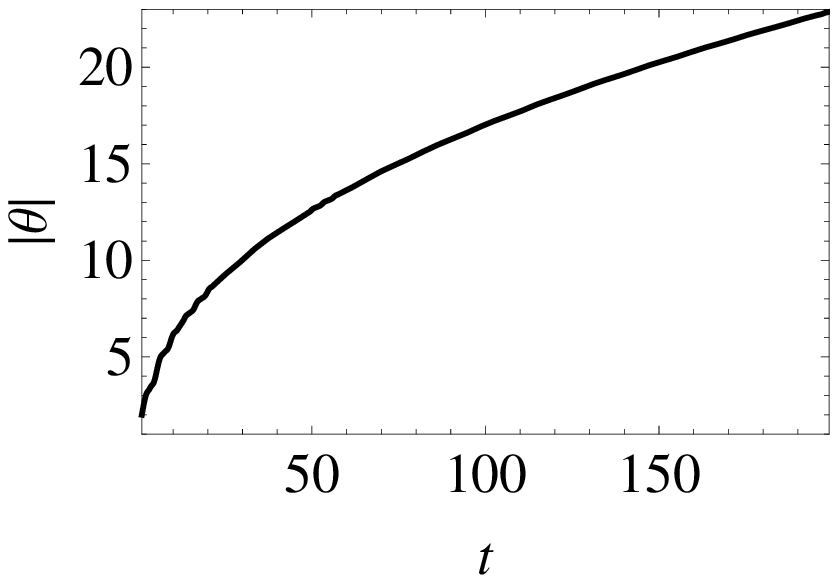}
}
\caption{\footnotesize The evolution of $(x(t),y(t))$, $r(t)$, $|\theta(t)|$ for solutions of \eqref{ex3} with $s_2=1$, $h=1/6$, $a_0=a_1=b_1=z_1=s_4=0$, where $x(t)=r(t)\cos(\theta(t)+S(t))$, $y(t)=-r(t)\sin(\theta(t)+S(t))$. The blue points correspond to initial data $(x(1),y(1))$. The gray solid curves correspond to level lines of $H_0(x,y)$. The gray dashed curve corresponds to $r=2 R_\ast t^{-\nu-l/q}$.} \label{Fig31}
\end{figure}

(II) Let $s_2+a_0= 0$, $a_1>0$ and $h=0$. In this case, $\omega(E)\equiv 1$, $l=0$ and $m=8$. If
\begin{gather}
    \label{e3cond}   -\frac{5a_1^2+3a_0^2}{24}<s_4<\frac{a_1^2-3a_0^2}{24}
\end{gather}
then $\Omega_8$ satisfies condition \eqref{as01} with
\begin{gather*}
\psi_\ast=-\frac{1}{2}\arccos\left(-\frac{24s_4+3a_0^2+2a_1^2}{3a_1^2}\right)+\pi j, \quad j\in\mathbb Z, \quad \vartheta_8=\frac{a_1^2}{4}\sin2\psi_\ast<0.
\end{gather*}
Hence, if either $b_0-\vartheta_8+5/4<0$ or $b_0-\vartheta_8+5/4>0$, $z_0<0$, then, by applying Theorem~\ref{Th2} with $n+d=m=2q$, we obtain polynomial stability of the equilibrium $(0,0)$ in system \eqref{FulSys} (see Fig.~\ref{Fig32}, a). If $b_0-\vartheta_8+1/2>0$ and $z_0>0$, then the equilibrium is unstable (see Fig.~\ref{Fig32}, b).

\begin{figure}
\centering
\subfigure[$b_0=-1.5$, $z_0=0$]{
\includegraphics[width=0.28\linewidth]{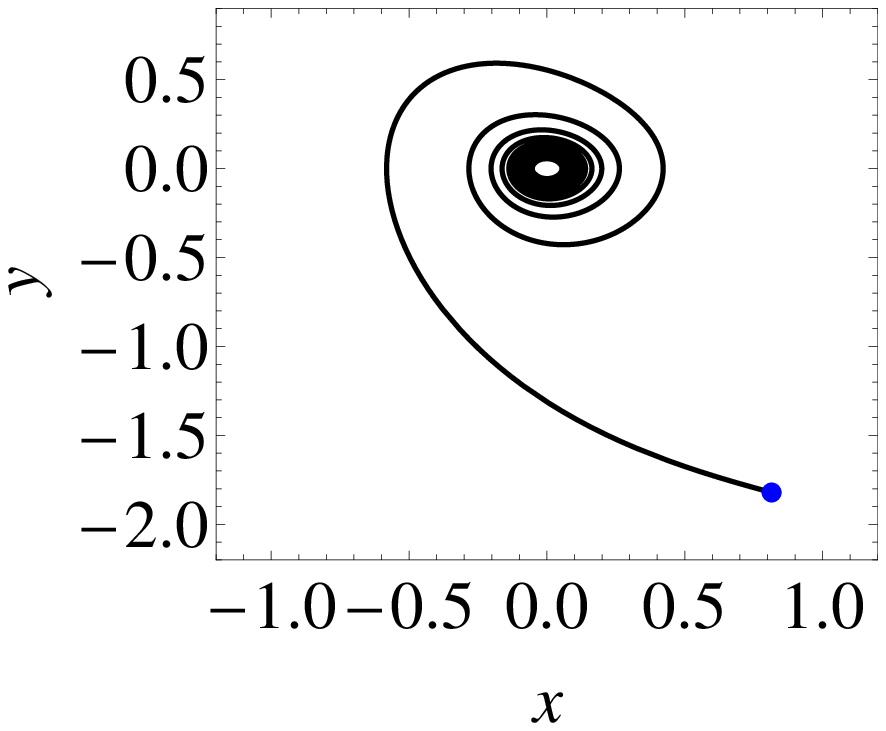} \ \
\includegraphics[width=0.32\linewidth]{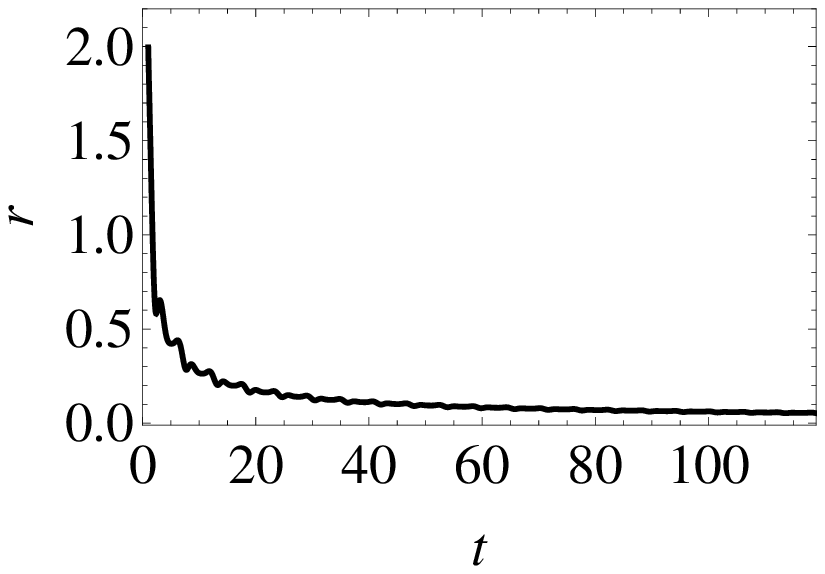} \ \
\includegraphics[width=0.32\linewidth]{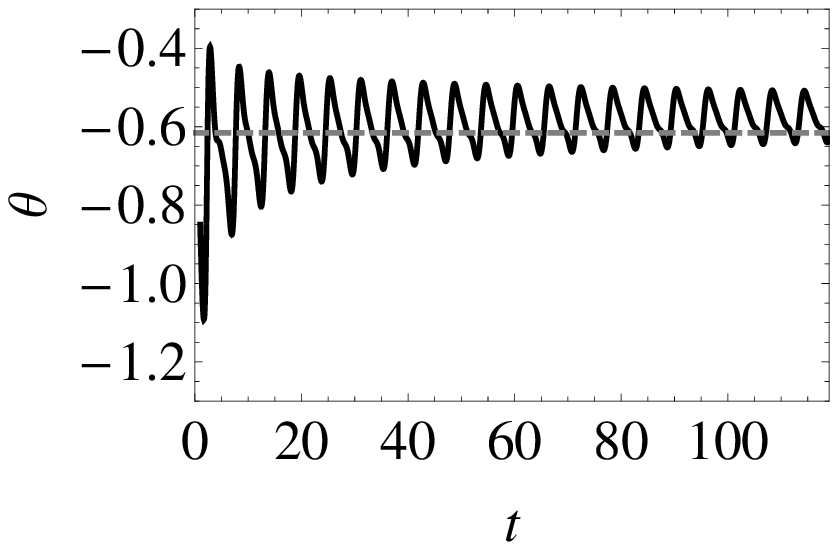}
}
\\
\subfigure[$b_0=-0.2$, $z_0=0.2$]{
\includegraphics[width=0.28\linewidth]{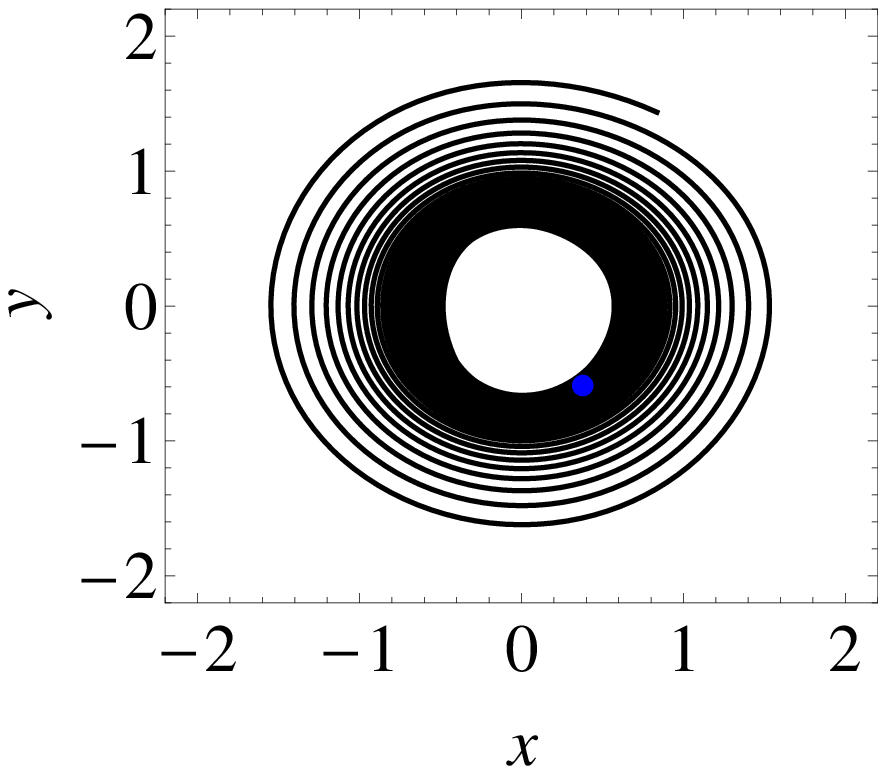} \ \
\includegraphics[width=0.32\linewidth]{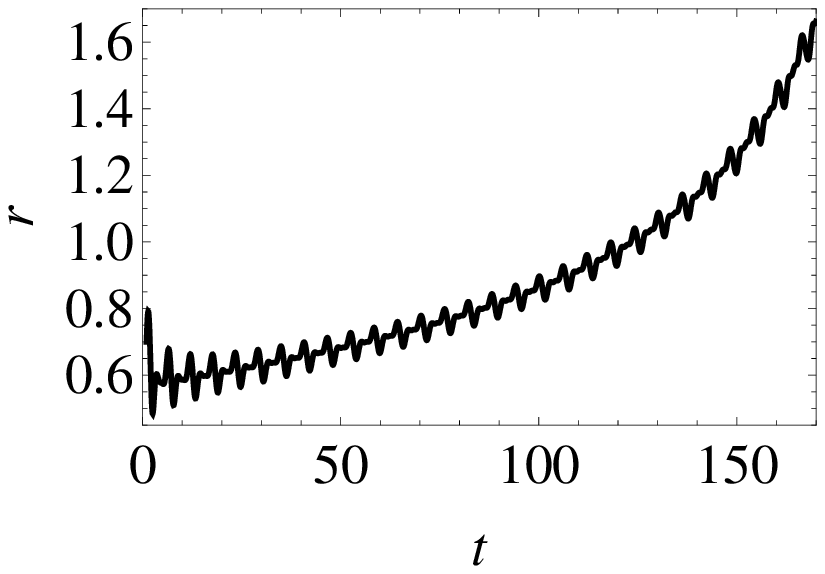} \ \
\includegraphics[width=0.32\linewidth]{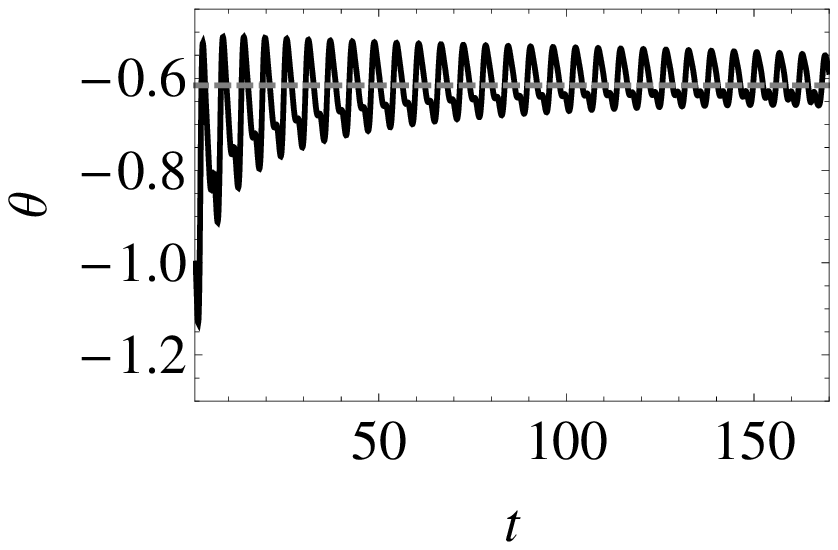}
}
\caption{\footnotesize The evolution of $(x(t),y(t))$, $r(t)$, $\theta(t)$ for solutions of \eqref{ex3} with $a_0=-1$, $a_1=1$, $s_2=1$, $s_4=-1/4$, $b_1=h=0$, $\vartheta_8\approx -0.235$, where $x(t)=r(t)\cos(\theta(t)+S(t))$, $y(t)=-r(t)\sin(\theta(t)+S(t))$. The blue points correspond to initial data $(x(1),y(1))$. The gray dashed curves correspond to $\theta=\psi_\ast$, where $\psi_\ast\approx -0.615$.} \label{Fig32}
\end{figure}

(III) Let $s_2+a_0= 0$, $h=0$ and assumption \eqref{e3cond} does not hold such that $|24s_4+3a_0^2+2a_1^2|>3a_1^2$, then $\Omega_8$ satisfies \eqref{as02}. It follows from Theorem~\ref{Th6} that the equilibrium $(0,0)$ of system \eqref{FulSys} is unstable if $2+4b_0-a_1^2>0$ and $z_0>0$  (see Fig.~\ref{Fig33}, a). By applying Theorem~\ref{Th7}, we obtain exponential stability if $\widehat\gamma_{8,8}=(b_0+1/2)\widehat \chi_8<0$, where $\widehat\chi_8:=\langle|\Omega_8(0,\psi)|^{-1}\rangle_\psi>0$ (see Fig.~\ref{Fig33}, b).

\begin{figure}
\centering
\subfigure[$b_0=0$, $z_0=0.1$]{
\includegraphics[width=0.28\linewidth]{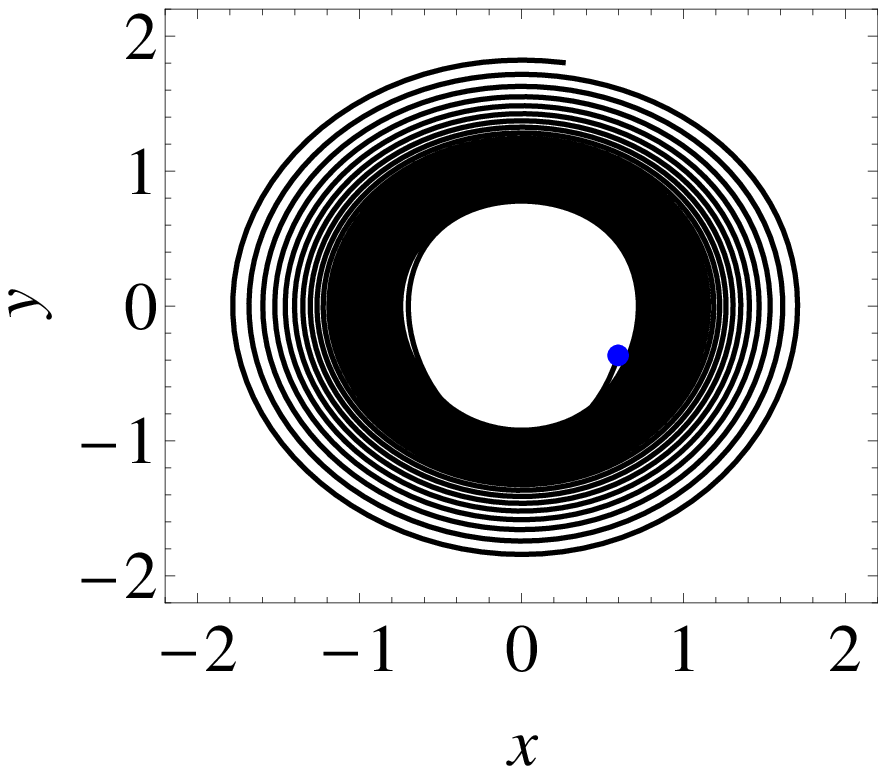} \ \
\includegraphics[width=0.30\linewidth]{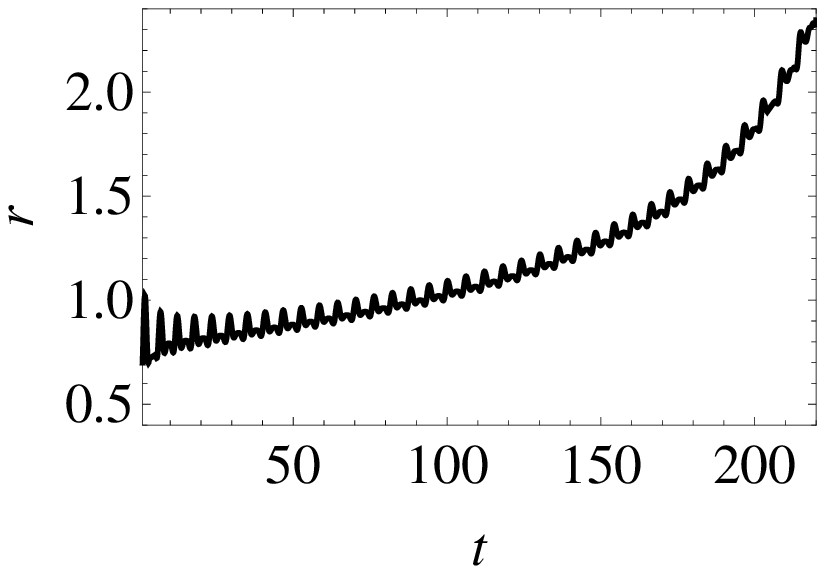} \ \
\includegraphics[width=0.30\linewidth]{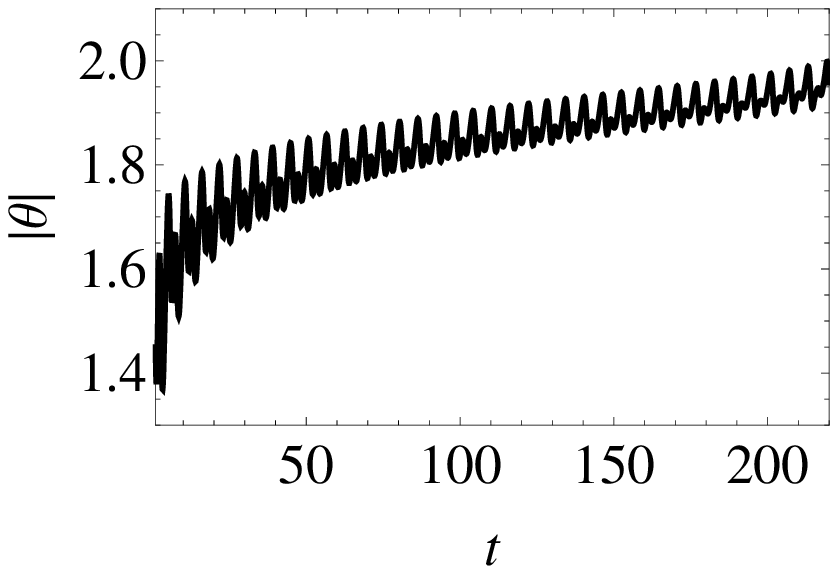}
}
\\
\subfigure[$b_0=-1$, $z_0=0$]{
\includegraphics[width=0.28\linewidth]{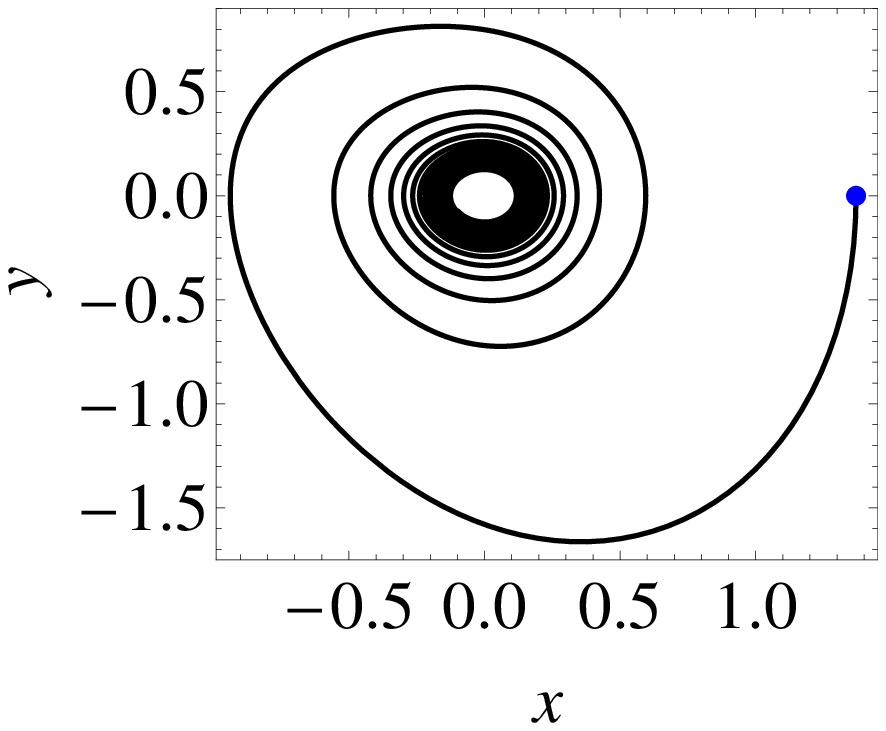} \ \
\includegraphics[width=0.30\linewidth]{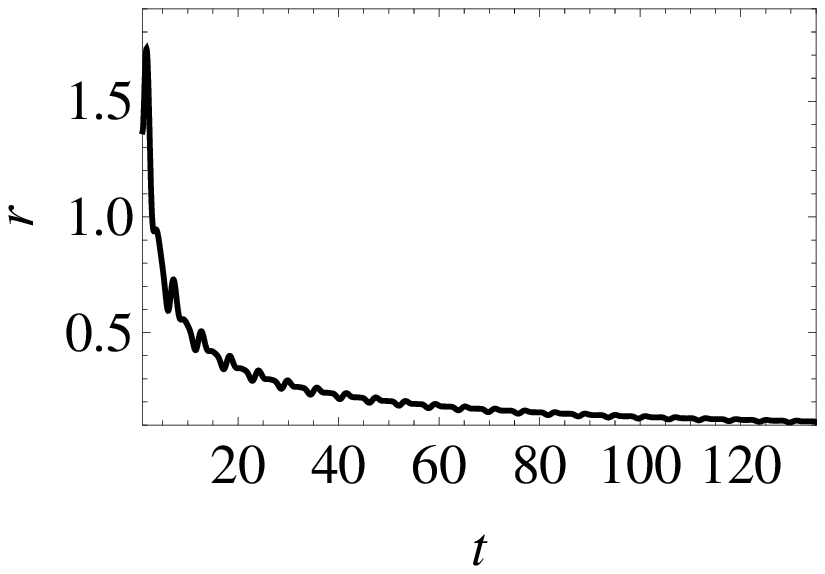} \ \
\includegraphics[width=0.30\linewidth]{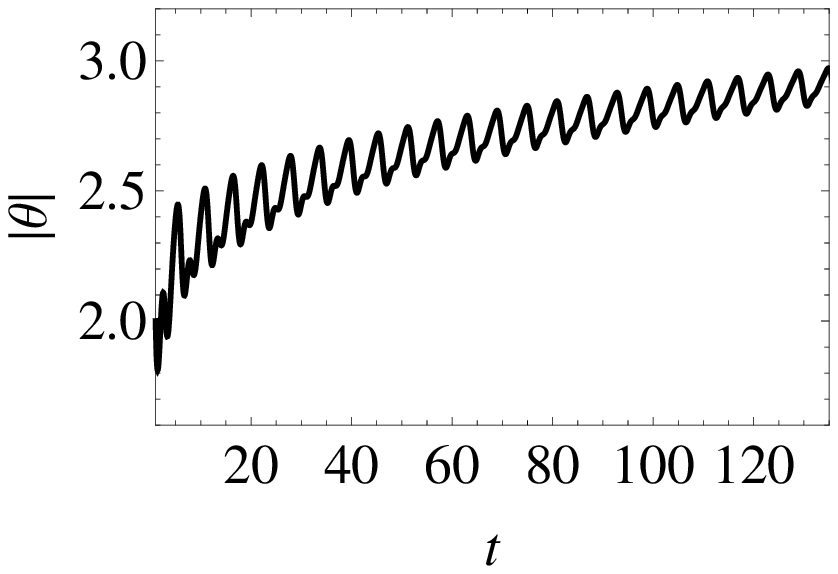}
}
\caption{\footnotesize The evolution of $(x(t),y(t))$, $r(t)$, $|\theta(t)|$ for solutions of \eqref{ex3} with $a_0=-1$, $a_1=1$, $s_2=1$, $s_4=0$, $h=0$, where $x(t)=r(t)\cos(\theta(t)+S(t))$, $y(t)=-r(t)\sin(\theta(t)+S(t))$. The blue points correspond to initial data $(x(1),y(1))$. } \label{Fig33}
\end{figure}

\section{Conclusion}

Thus, we have shown that decaying oscillating perturbations of  Hamiltonian systems in the plane with a neutrally stable equilibrium can lead to the appearance of two different asymptotic regimes: a phase locking and a phase drifting. Which of the modes is realized in the system depends on the structure of the phase equation. In both cases the stability of the equilibrium in the perturbed system depends on the equation for the action variable. We have described the conditions under which the fixed point becomes asymptotically (polynomially or exponentially) stable or losses stability. In some cases, only a weak instability with a weight has been justified  (see, for example, Remark~\ref{Rem4}). In these cases, an additional detailed  analysis of long-term asymptotics for solutions is required.
Note that the emergence of new stable nonzero states when the equilibrium looses the stability has not been investigated in the paper. This will be discussed elsewhere.
\section*{Acknowledgments}
Research is supported by the Russian Science Foundation grant 19-71-30002.

}

 \end{document}